\documentclass{amsart}

\usepackage{amssymb, amsmath, amsthm, amscd, mathrsfs}

\usepackage{verbatim}
\usepackage{a4wide}

\usepackage{enumerate}

\allowdisplaybreaks




\theoremstyle{plain}
\newtheorem{theorem}{Theorem}[section]
\theoremstyle{remark}
\newtheorem{remark}[theorem]{Remark}

\theoremstyle{plain}
\newtheorem{corollary}[theorem]{Corollary}
\newtheorem{lemma}[theorem]{Lemma}
\newtheorem{proposition}[theorem]{Proposition}

\numberwithin{equation}{section}


\def\N{{\mathbb N}}

\def\R{{\mathbb R}}

\def\C{{\mathbb C}}


\renewcommand{\P}{{\mathbb P}}
\newcommand{\F}{{\mathscr F}}



\renewcommand{\O}{\Omega}



\newcommand{\calL}{{\mathscr L}}

\newcommand{\one}{{{\bf 1}}}

\newcommand{\lb}{\langle}
\newcommand{\rb}{\rangle}

\newcommand{\wh}{\widehat}
\newcommand{\supp}{\text{\rm supp\,}}
\newcommand{\nn}{|\!|\!|}

\newcommand{\Schw}{{\mathscr S}}
\newcommand{\TD}{{\mathscr S'}}

\newcommand{\Rad}{\rm{rad}}


\def\typeout#1{\message{^^J}\message{#1}\message{^^J}}
\newif\ifSRCOK \SRCOKtrue
\newcount\PAGETOP
\newcount\LASTLINE
\global\PAGETOP=1
\global\LASTLINE=-1
\def\EJECT{\SRC\eject}
\def\WinEdt#1{\typeout{:#1}}
\gdef\MainFile{\jobname.tex}
\gdef\CurrentInput{\MainFile}
\newcount\INPSP
\global\INPSP=0
\def\SRC{\ifSRCOK%
  \ifnum\inputlineno>\LASTLINE%
    \ifnum\LASTLINE<0%
      \global\PAGETOP=\inputlineno%
    \fi%
    \global\LASTLINE=\inputlineno%
    \ifnum\INPSP=0%
      \ifnum\inputlineno>\PAGETOP%
        
      \fi%
    \else%
      
    \fi%
  \fi%
\fi}
\def\PUSH#1{%
\SRC%
\ifnum\INPSP=0 \global\let\INPSTACKA=\CurrentInput \else%
\ifnum\INPSP=1 \global\let\INPSTACKB=\CurrentInput \else%
\ifnum\INPSP=2 \global\let\INPSTACKC=\CurrentInput \else%
\ifnum\INPSP=3 \global\let\INPSTACKD=\CurrentInput \else%
\ifnum\INPSP=4 \global\let\INPSTACKE=\CurrentInput \else%
\ifnum\INPSP=5 \global\let\INPSTACKF=\CurrentInput \else%
               \global\let\INPSTACKX=\CurrentInput \fi\fi\fi\fi\fi\fi%
\gdef\CurrentInput{#1}%
\WinEdt{<+ \CurrentInput}%
\global\LASTLINE=0%
\ifSRCOK\fi%
\global\advance\INPSP by 1}
\def\POP{%
\ifnum\INPSP>0 \global\advance\INPSP by -1  \fi%
\ifnum\INPSP=0 \global\let\CurrentInput=\INPSTACKA \else%
\ifnum\INPSP=1 \global\let\CurrentInput=\INPSTACKB \else%
\ifnum\INPSP=2 \global\let\CurrentInput=\INPSTACKC \else%
\ifnum\INPSP=3 \global\let\CurrentInput=\INPSTACKD \else%
\ifnum\INPSP=4 \global\let\CurrentInput=\INPSTACKE \else%
\ifnum\INPSP=5 \global\let\CurrentInput=\INPSTACKF \else%
               \global\let\CurrentInput=\INPSTACKX \fi\fi\fi\fi\fi\fi%
\WinEdt{<-}%
\global\LASTLINE=\inputlineno%
\global\advance\LASTLINE by -1%
\SRC}
\def\INPUT#1{\relax}
\def
\PUSH{#}%
\input #%
\POP1{
\SRC%
\PUSH{#1}%
\input #1%
\POP%
\SRC}
\let\originalxxxeverypar\everypar
\newtoks\everypar
\originalxxxeverypar{\the\everypar\expandafter\SRC}
\everymath\expandafter{\the\everymath\expandafter\SRC}
\output\expandafter{\expandafter\SRCOKfalse\the\output}


\newif\ifSRCOK \SRCOKtrue
\DeclareOption{active}{\SRCOKtrue}
\DeclareOption{inactive}{\SRCOKfalse}
\ExecuteOptions{active}
\ProcessOptions
\newcount\PAGETOP
\newcount\LASTLINE
\global\PAGETOP=1
\global\LASTLINE=-1
\gdef\MainFile{\jobname.tex}
\gdef\CurrentInput{\MainFile}
\newcount\INPSP
\global\INPSP=0
\def\EJECT{\SRC\eject}
\def\WinEdt#1{\typeout{:#1}}
\def\SRC{\ifSRCOK%
  \ifnum\inputlineno>\LASTLINE%
    \ifnum\LASTLINE<0%
      \global\PAGETOP=\inputlineno%
    \fi%
    \global\LASTLINE=\inputlineno%
    \ifnum\INPSP=0%
      \ifnum\inputlineno>\PAGETOP%
      \fi%
    \else%
    \fi%
  \fi%
\fi}
\def\PUSH#1{%
\SRC%
\ifnum\INPSP=0 \global\let\INPSTACKA=\CurrentInput \else%
\ifnum\INPSP=1 \global\let\INPSTACKB=\CurrentInput \else%
\ifnum\INPSP=2 \global\let\INPSTACKC=\CurrentInput \else%
\ifnum\INPSP=3 \global\let\INPSTACKD=\CurrentInput \else%
\ifnum\INPSP=4 \global\let\INPSTACKE=\CurrentInput \else%
\ifnum\INPSP=5 \global\let\INPSTACKF=\CurrentInput \else%
               \global\let\INPSTACKX=\CurrentInput \fi\fi\fi\fi\fi\fi%
\gdef\CurrentInput{#1}%
\WinEdt{<+ \CurrentInput}%
\global\LASTLINE=0%
\ifSRCOK\fi%
\global\advance\INPSP by 1}
\def\POP{%
\ifnum\INPSP>0 \global\advance\INPSP by -1  \fi%
\ifnum\INPSP=0 \global\let\CurrentInput=\INPSTACKA \else%
\ifnum\INPSP=1 \global\let\CurrentInput=\INPSTACKB \else%
\ifnum\INPSP=2 \global\let\CurrentInput=\INPSTACKC \else%
\ifnum\INPSP=3 \global\let\CurrentInput=\INPSTACKD \else%
\ifnum\INPSP=4 \global\let\CurrentInput=\INPSTACKE \else%
\ifnum\INPSP=5 \global\let\CurrentInput=\INPSTACKF \else%
               \global\let\CurrentInput=\INPSTACKX \fi\fi\fi\fi\fi\fi%
\WinEdt{<-}%
\global\LASTLINE=\inputlineno%
\global\advance\LASTLINE by -1%
\SRC}
\def\INPUT#1{\relax}
\let\OldINCLUDE=\include
\def\include#1{
\EJECT%
\PUSH{#1.tex}%
\OldINCLUDE{#1}%
\POP}
\def
\PUSH{#}%
\input #%
\POP1{
\PUSH{#1}%
\input #1%
\POP}
\let\originalxxxeverypar\everypar
\newtoks\everypar
\originalxxxeverypar{\the\everypar\expandafter\SRC}
\everymath\expandafter{\the\everymath\expandafter\SRC}
\let\zzzxxxbibliography=\bibliography
\def\bibliography#1{\PUSH{\jobname.bbl}\zzzxxxbibliography{#1}\POP}
\output\expandafter{\expandafter\SRCOKfalse\the\output}



\begin{document}

\author{Martin Meyries}
\address{Institute of Mathematics,
Martin-Luther-Universit\"at Halle-Wittenberg, 06099 Halle (Saale), Germany}
\email{martin.meyries@mathematik.uni-halle.de}

\author{Mark Veraar}
\address{Delft Institute of Applied Mathematics\\
Delft University of Technology \\ P.O. Box 5031\\ 2600 GA Delft\\The
Netherlands} \email{M.C.Veraar@tudelft.nl}


\keywords{Pointwise multipliers, characteristic functions, Sobolev spaces, Bessel-potential spaces, Besov spaces, Triebel-Lizorkin spaces, Muckenhoupt weights, paraproducts, UMD spaces, type, cotype, Littlewood-Paley theory}

\subjclass[2010]{42B25, 46E35, 46E40}

\thanks{The first author was partially supported by the Deutsche Forschungsgemeinschaft (DFG)}

\title[Pointwise multiplication on vector-valued function spaces]{Pointwise multiplication on vector-valued function spaces with power weights}

\maketitle
\begin{abstract}
We investigate pointwise multipliers on vector-valued function spaces over $\R^d$, equipped with Muckenhoupt weights. The main result is that in the natural parameter range, the characteristic function of the half-space is a pointwise multiplier on Bessel-potential spaces with values in a UMD Banach space. This is proved for a class of power weights, including the unweighted case, and extends the classical result of Shamir and Strichartz. The multiplication estimate is based on the paraproduct technique and a randomized Littlewood-Paley decomposition. An analogous result is obtained for Besov and Triebel-Lizorkin spaces.
\end{abstract}


\section{Introduction}
It is a classical result of Shamir \cite{Shamir} and Strichartz \cite{Strich67} that for $p\in (1,\infty)$ the characteristic function $\one_{\R_+^d}$ of the half-space $\R^d_+ = \{(x',t): x'\in \R^{d-1},\, t>0\}$ acts as a pointwise multiplier on the Bessel-potential space (or fractional Sobolev space) $H^{s,p}(\R^d)$ in the parameter range
$$-\frac{1}{p'}<s<\frac1p,$$
where $p'$ is the dual exponent of $p$. This condition can be understood by recalling that the trace at the hyperplane $\{(x',0):x'\in \R^{d-1}\}$ is continuous on these spaces if and only if $s>1/p$. The case of negative smoothness follows from a duality argument. The corresponding result was proved some years earlier for the Slobodetskii spaces $W^{s,p}(\R^d)$ by Lions $\&$ Magenes \cite{LiMa} and Grisvard \cite{Gri63}. Further extensions to Besov spaces $B_{p,q}^s(\R^d)$ and Triebel-Lizorkin spaces $F_{p,q}^s(\R^d)$ were given by Peetre \cite{Peetre76}, Triebel \cite{Tri83}, Franke \cite{Franke86}, Marschall \cite{Marschall} and Sickel \cite{Sickel87}, see the monograph of Runst $\&$ Sickel \cite{RS96} for details. For more recent results we also refer to Sickel \cite{Sickel99b, Sickel99a} and Triebel \cite{Triebel02}.\smallskip

The characteristic function serves as a natural extension operator for the half-space. Its multi\-plier property was one of the main ingredients for Seeley's result \cite{Se} on complex interpolation of Bessel-potential spaces with  boundary conditions. On the other hand, the multiplier property is also a direct consequence of Seeley's result. In this sense the assertions are equivalent. They are further equivalent to the validity of Hardy's inequality \cite[Section 2.8.6]{Tri83}.\smallskip

In this paper we extend the multiplier result for the characteristic function  to the weighted vector-valued case. We consider power weights $w_\gamma$ depending on the last coordinate only, i.e.,
$$w_\gamma(x',t) = |t|^\gamma, \qquad x'\in \R^{d-1},\qquad t\in \R.$$
These weights act at the same hyperplane as $\one_{\R_+^d}$. Hence the parameter range where $\one_{\R_+^d}$ is a multiplier will depend on the exponent $\gamma$. Here the dual exponent $\gamma' = - \frac{\gamma}{p-1}$ of $\gamma$ with respect to $p$ comes into play.

The following is our main result. It is proved in Section \ref{subs:multich}.  In the vector-valued case it seems to be new also in the unweighted case $\gamma = 0$.

\begin{theorem} \label{thm:1}Let $X$ be a \emph{UMD} Banach space, $p\in (1,\infty)$ and $\gamma\in (-1,p-1)$. Then for
$$-\frac{1+\gamma'}{p'} < s < \frac{1+\gamma}{p}$$
the characteristic function $\one_{\R_+^d}$ of the half-space is a pointwise multiplier on $H^{s,p}(\R^d,w_\gamma;X)$.
\end{theorem}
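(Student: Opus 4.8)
The plan is to reduce the assertion to a paraproduct estimate and then use a randomized Littlewood–Paley decomposition adapted to the $UMD$-valued weighted setting. First I would fix a Littlewood–Paley resolution $(\varphi_k)_{k\ge 0}$ and, for $f\in H^{s,p}(\R^d,w_\gamma;X)$ and $g=\one_{\R^d_+}$, decompose the product via the Bony paraproduct
$$fg = \Pi_1(f,g) + \Pi_2(f,g) + \Pi_3(f,g),$$
where $\Pi_1$ puts the high frequency on $f$, $\Pi_2$ the high frequency on $g$, and $\Pi_3$ is the diagonal (resonant) term. The easy term is $\Pi_1$: since $g\in L^\infty$, the standard $UMD$-valued paraproduct estimate (Marschall/Maz'ya–Shaposhnikova type, in the weighted randomized form available from the Littlewood–Paley theory cited above) gives $\n \Pi_1(f,g)\n_{H^{s,p}(w_\gamma;X)}\lesssim \n g\n_\infty \n f\n_{H^{s,p}(w_\gamma;X)}$ with no restriction on $s$ at all. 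The whole subtlety is concentrated in $\Pi_2$ and $\Pi_3$, where one must use that the Littlewood–Paley pieces $\varphi_k * g$ of the characteristic function are not arbitrary but have a precise structure: $\varphi_k*\one_{\R^d_+}(x',t)$ is a function of $2^k t$ only, smooth, bounded, and with all derivatives in the $t$-direction having good decay.

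For these two terms I would exploit that structure. Writing $h_k = \varphi_k * g$, one has $\partial_t^m h_k(x',t) = 2^{km}\psi_m(2^k t)$ for a fixed Schwartz-type profile $\psi_m$ on $\R$; in particular $h_k$ is essentially supported (up to rapidly decaying tails) in the slab $|t|\lesssim 2^{-k}$, and on that slab $h_k$ together with its $x'$-derivatives is bounded. The key auxiliary estimate is a weighted one-dimensional bound: for the power weight $w_\gamma(t)=|t|^\gamma$ with $\gamma\in(-1,p-1)$ and for $\sigma$ in the stated range one controls $\sum_k 2^{k\sigma p}\int_{\R} |h_k(t)\, a_k(t)|^p\, w_\gamma(t)\,dt$ against $\n (a_k)\n^p_{H^{\sigma,p}(w_\gamma)}$-type quantities, using that $\int_{|t|\le 2^{-k}} |t|^\gamma\,dt \sim 2^{-k(1+\gamma)}$ and that $|h_k|\lesssim 1$; this is exactly where the exponents $\tfrac{1+\gamma}{p}$ and $-\tfrac{1+\gamma'}{p'}$ enter, through the scaling balance between the factor $2^{k\sigma}$ and the weighted measure of the slab, together with the Fefferman–Stein / Hardy-type inequality valid precisely when $\gamma\in(-1,p-1)$ (so that $w_\gamma\in A_p$ in the $t$-variable). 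I would then lift this one-dimensional bound to $\R^d$ by Fubini in $x'$ and the $UMD$-valued square-function (randomized) characterization of $H^{\sigma,p}(\R^d,w_\gamma;X)$, turning the $\ell^2$-sums of Littlewood–Paley pieces into expectations of Rademacher sums, where the $UMD$ property legitimizes the relevant Fourier multiplier manipulations.

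The term $\Pi_3$ I would handle similarly but with an extra summation: the resonant term gains no derivatives for free, so one pays the full $2^{k s}$ and must sum a geometric-type series in $k$, which converges exactly under the strict inequalities in the theorem; the endpoint cases $s=\tfrac{1+\gamma}{p}$ and $s=-\tfrac{1+\gamma'}{p'}$ correspond to a logarithmically divergent sum, consistent with the trace obstruction mentioned in the introduction. To cover the full range I would first prove the upper half $0\le s<\tfrac{1+\gamma}{p}$ directly by the above, then obtain the negative-smoothness half $-\tfrac{1+\gamma'}{p'}<s\le 0$ by duality: the dual of $H^{s,p}(\R^d,w_\gamma;X)$ is $H^{-s,p'}(\R^d,w_{\gamma'};X^*)$, $X^*$ is again $UMD$, $w_{\gamma'}$ is again an admissible power weight since $\gamma'=-\gamma/(p-1)\in(-1,p'-1)$, and $\one_{\R^d_+}$ is self-adjoint as a multiplication operator; the range for $-s$ with respect to $(p',\gamma')$ is precisely the reflection of the original range, so the two halves match up at $s=0$ and together give the assertion.

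I expect the main obstacle to be the $\Pi_2$ and $\Pi_3$ estimates in the weighted vector-valued form — specifically, making the randomized Littlewood–Paley decomposition interact correctly with the power weight so that the slab-measure computation $\int_{|t|\lesssim 2^{-k}}|t|^\gamma\,dt\sim 2^{-k(1+\gamma)}$ can be inserted inside an $\E$ of a Rademacher sum without losing the sharp exponent. Everything else — the $\Pi_1$ bound, the Fubini reduction, the duality step — is routine once the weighted $A_p$-theory and the $UMD$-valued Littlewood–Paley machinery from the earlier sections are in place.
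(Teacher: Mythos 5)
Your architecture coincides with the paper's at the top level: the Bony decomposition $mf=\Pi_1+\Pi_2+\Pi_3$ with $m=\one_{\R^d_+}$, the term carrying the high frequency on $f$ bounded by $\|m\|_\infty\|f\|_{H^{s,p}}$ via the randomized Littlewood--Paley characterization of $H^{s,p}(\R^d,w_\gamma;X)$ (this is exactly Lemma \ref{multiplication1} combined with Proposition \ref{prop:UMDHisF}), and the remaining two terms controlled through the scaling structure of $S_k\one_{\R^d_+}$, which the paper encodes as membership of a cut-off of $\one_{\R^d_+}$ in $B_{r,\infty}^{(1+\mu)/r}(\R,w_\mu)$ (Lemma \ref{chi}). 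Your duality reduction of $-\frac{1+\gamma'}{p'}<s\le 0$ to the positive-smoothness case for $(p',\gamma',X^*)$ is a legitimate deviation from the paper: $X^*$ is again UMD, $\gamma'\in(-1,p'-1)$, and Proposition \ref{dual-H} supplies $H^{s,p}(\R^d,w_\gamma;X)^*=H^{-s,p'}(\R^d,w_{\gamma'};X^*)$, so the two half-ranges do match up (the paper treats the whole range directly only because it also wants Theorem \ref{thm:2} for non-reflexive $X$ and $q\in\{1,\infty\}$, where duality fails).

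The genuine gap is in $\Pi_2$ and $\Pi_3$, and you name it yourself without resolving it. Your plan is to insert the slab computation $\int_{|t|\le 2^{-k}}|t|^\gamma\,dt\sim 2^{-k(1+\gamma)}$ into the randomized square-function norm. Two obstacles. First, the summands of $\Pi_2$ are frequency-localized only from above, $\supp\F[(S_{k+j}m)(S_kf)]\subset\{|\xi|\le 5\cdot 2^k\}$, so $S_n\Pi_2(m,f)$ receives contributions from all $k\gtrsim n$; summing these inside a Rademacher average has no orthogonality or contraction-principle mechanism available, and the triangle inequality loses the sharp exponent. Second, in the part of the range beyond Shamir--Strichartz ($s\ge 1/p$ or $s\le -1/p'$) the weighted gain on $S_km$ must be taken in an $L^r(\R,w_\mu)$-norm with $r\neq p$ and $\mu\neq\gamma$, and one needs a device to trade integrability and weight between the two factors and return to $L^p(w_\gamma)$. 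The paper's device is the microscopic improvement: estimate $\Pi_2,\Pi_3$ from $F^s_{p,\infty}(\R^d,w_\gamma;X)\supset H^{s,p}$ into $F^s_{p,1}(\R^d,w_\gamma;Y)\subset H^{s,p}$ (Lemmas \ref{multiplication3} and \ref{multiplication4}), using mixed-norm H\"older in $t$, the discrete Jawerth--Franke embeddings of Proposition \ref{prop:jf_discrete}, and the convergence Lemma \ref{para2}; the $\ell^1$-summation in $n$ built into $F^s_{p,1}$ is what absorbs the one-sided frequency localization, and the sandwich \eqref{eq:TriebelLizorkinH} removes any need for randomization in these two terms. Without this, or an equivalent substitute, your treatment of $\Pi_2$ and $\Pi_3$ does not close, and these are precisely the terms in which the endpoints $\frac{1+\gamma}{p}$ and $-\frac{1+\gamma'}{p'}$ are generated.
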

To be precise, the theorem states that for all $f\in H^{s,p}(\R^d,w_\gamma;X)$ the product $\one_{\R_+^d}f$ again belongs to $H^{s,p}(\R^d,w_\gamma;X)$ and there is a constant $C > 0$, independent of $f$, such that
$$\|\one_{\R_+^d}f\|_{H^{s,p}(\R^d,w_\gamma;X)} \leq C \|f\|_{H^{s,p}(\R^d,w_\gamma;X)}.$$
This multiplier result seems to close a gap in the literature. It has already been used in several works.

The spaces $H^{s,p}(\R^d,w_\gamma;X)$ are defined with the Bessel-potential in the usual way based on the weighted Lebesgue space $L^p(\R^d,w_\gamma;X)$, see Section \ref{subsec:spaces}. For an exponent $\gamma\in (-1,p-1)$ as in the theorem, the weight $w_\gamma$ belongs to the Muckenhoupt class $A_p$, see Section \ref{sec:Mucken}. The condition on $s$ shows the effect of the weight on the regularity of $H^{s,p}(\R^d,w_\gamma;X)$ at the hyperplane $\{(x',0):x'\in \R^{d-1}\}$: the range of $s$ where jumps are allowed is enlarged as $\gamma$ increases.

A Banach space $X$ has UMD if and only if the Hilbert transform extends continuously to $L^2(\R;X)$, see Section \ref{subsec:UMD} for some details and references. For instance, Hilbert spaces and classical function spaces like $L^p$, $W^{s,p}$, $H^{s,p}$, $B_{p,q}^s$ and $F_{p,q}^s$ have UMD in their reflexive range. Many other fundamental operators in vector-valued harmonic analysis are bounded if and only if the underlying space has UMD. Since the 80's,
it has turned out that in UMD spaces one can develop vector-valued Fourier analysis (see \cite{Bou83, Bou2, Bu2, Bu3, McC84, Zim89}). More recently, this has led to an extensive theory on operator-valued Fourier multipliers and singular integrals (see \cite{GW03,HHN, HytTb, HytWeT1, StrWe,We}), which originally was motivated by regularity theory for parabolic PDEs (see \cite{DHP,KuWe} and references therein).

Employing standard localization techniques, Theorem \ref{thm:1} extends to the characteristic function of Lipschitz domains on spaces equipped with power weights based on the distance to the boundary.

\begin{corollary}\label{cor:Lipschitz-indicator}
Let $X$ be a \emph{UMD} Banach space, $p\in (1,\infty)$, $\gamma\in (-1,p-1)$ and $-\frac{1+\gamma'}{p'} < s < \frac{1+\gamma}{p}$. Assume $\Omega \subset \R^d$ is a bounded Lipschitz domain. Then the characteristic function $\one_{\Omega}$ of $\Omega$ is a pointwise multiplier on $H^{s,p}(\R^d,\emph{\text{dist}}(\cdot,\partial \Omega)^\gamma;X)$.
\end{corollary}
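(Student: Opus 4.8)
The plan is to deduce the corollary from the half-space case of Theorem~\ref{thm:1} by the usual procedure of localising, flattening the boundary, and transferring. First I would cover the compact set $\partial\Omega$ by finitely many open balls $B_1,\dots,B_N$ such that, after a rigid motion depending on $j$, one has $\Omega\cap B_j=B_j\cap\{(x',t):t>\phi_j(x')\}$ for a globally Lipschitz function $\phi_j\colon\R^{d-1}\to\R$, add an open set $B_0$ with $\overline{B_0}\subset\Omega$ so that $\overline\Omega\subset\bigcup_{j=0}^N B_j$, and pick a smooth partition of unity $\psi_0,\dots,\psi_N$ with $\supp\psi_j$ compactly contained in $B_j$ and $\sum_{j=0}^N\psi_j\equiv 1$ on a neighbourhood of $\overline\Omega$. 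Since $1-\sum_j\psi_j$ then vanishes on $\Omega$, we get $\one_\Omega f=\sum_{j=0}^N\one_\Omega(\psi_j f)$; because multiplication by a fixed function in $C^\infty_c(\R^d)$ is bounded on $H^{s,p}(\R^d,w;X)$ for every $w\in A_p$, it suffices to estimate each $\|\one_\Omega(\psi_j f)\|$ by $\|\psi_j f\|\lesssim\|f\|$. The term $j=0$ is trivial because $\one_\Omega\psi_0 f=\psi_0 f$.

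For a boundary index $j\ge1$, on $\supp\psi_j\subset B_j$ the indicator $\one_\Omega$ coincides with $\one_{\{t>\phi_j(x')\}}$, so $\one_\Omega(\psi_j f)=\one_{\{t>\phi_j\}}(\psi_j f)$. I would then use the shear homeomorphism $\Phi_j(x',t):=(x',t-\phi_j(x'))$, which is bi-Lipschitz and measure-preserving on $\R^d$, maps $\{t>\phi_j(x')\}$ onto $\R^d_+$, and satisfies $(\one_{\{t>\phi_j\}}\psi_j f)\circ\Phi_j^{-1}=\one_{\R^d_+}\cdot\big((\psi_j f)\circ\Phi_j^{-1}\big)$. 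Shrinking the balls if necessary one may assume that on a neighbourhood of $\supp\psi_j$ the function $\dist(\cdot,\partial\Omega)$ equals the distance to the full Lipschitz graph of $\phi_j$; this latter weight, transported by $\Phi_j$, is comparable — with constants depending only on the Lipschitz constant of $\phi_j$ — to $|y_d|$, that is, to the model power weight $w_\gamma$ on $\R^d$, which lies in $A_p$ since $\gamma\in(-1,p-1)$. By the locality of these spaces with respect to $A_p$ weights (the $H^{s,p}$-norm of a function of fixed compact support depending, up to equivalence, only on the weight near its support), one may therefore replace $\dist(\cdot,\partial\Omega)^\gamma$ by $w_\gamma$ after transferring.

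Putting these together, the chain of estimates I would run is: first, a bi-Lipschitz change of variables passes from $\|\one_\Omega(\psi_j f)\|_{H^{s,p}(\R^d,\dist(\cdot,\partial\Omega)^\gamma;X)}$ to $\big\|\one_{\R^d_+}\big((\psi_j f)\circ\Phi_j^{-1}\big)\big\|_{H^{s,p}(\R^d,w_\gamma;X)}$, where one must use that $\gamma\in(-1,p-1)$ forces $-1<-\tfrac{1+\gamma'}{p'}<s<\tfrac{1+\gamma}{p}<1$, so that the change of variables is admissible on $H^{s,p}$ (for $s<0$ by duality, with the again-admissible dual weight $\dist(\cdot,\partial\Omega)^{\gamma'}$). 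Then Theorem~\ref{thm:1} bounds the right-hand side by $\|(\psi_j f)\circ\Phi_j^{-1}\|_{H^{s,p}(\R^d,w_\gamma;X)}$, and transforming back — again by the change of variables and the boundedness of multiplication by $\psi_j$ — this is $\lesssim\|f\|_{H^{s,p}(\R^d,\dist(\cdot,\partial\Omega)^\gamma;X)}$. Summing over $j$ completes the proof.

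The only genuinely non-formal ingredient is the change-of-variables step: Lipschitz (rather than $C^1$) charts are in general not admissible on $H^{s,p}$ once $|s|\ge1$, so the argument must exploit that our parameter window lies strictly inside $(-1,1)$ — where, thanks to the triangular, measure-preserving structure of the shear, boundedness can be obtained by interpolating the $L^p$ and $W^{1,p}$ cases and dualising — and it must be carried through with $A_p$ weights and $X$-valued functions, checking that the distance-to-boundary weight is turned into something comparable to the model weight $w_\gamma$ with uniform constants. Everything else (the partition of unity, the local identity $\one_\Omega=\one_{\{t>\phi_j\}}$, multiplication by smooth cut-offs, and the weight localisation) is routine.
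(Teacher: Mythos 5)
The paper offers no proof of this corollary beyond the remark that it follows from Theorem \ref{thm:1} ``employing standard localization techniques'', and your argument is precisely that localization: a partition of unity subordinate to a covering of $\partial\Omega$ by coordinate balls, flattening by the measure-preserving Lipschitz shear, comparison of the transported distance weight with the model weight $w_\gamma$, and an appeal to the half-space result. Your outline is correct, and you have rightly isolated the two points that actually require proof: invariance of $H^{s,p}(\R^d,w;X)$ under the Lipschitz shear for $|s|<1$ (via the $L^p$--$W^{1,p}$ endpoints, complex interpolation as in Proposition \ref{interpol-complex}, and duality as in Proposition \ref{dual-H}), and the locality of the $H^{s,p}$-norm in the weight for compactly supported elements --- where for $s<0$ the element $\psi_j f$ is only a distribution, so this step too must be run by duality rather than by the direct off-diagonal kernel estimate that suffices for $s\ge 0$.
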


Our second main result concerns Besov and Triebel-Lizorkin spaces and does not require the UMD property of the underlying Banach space. It is also proved in Section \ref{subs:multich}. For weighted vector-valued $B$-spaces, the case $s> 0$ was already treated by Grisvard \cite{Gri63}.

\begin{theorem} \label{thm:2}Let $X$ be a Banach space, $p\in (1,\infty)$, $q\in [1,\infty]$ and $\gamma\in (-1,p-1)$. Then for
$-\frac{1+\gamma'}{p'} < s < \frac{1+\gamma}{p}$
the characteristic function $\one_{\R_+^d}$ of the half-space is a pointwise multiplier on $B^{s}_{p,q}(\R^d,w_\gamma;X)$ and on $F^{s}_{p,q}(\R^d,w_\gamma;X)$, respectively.
\end{theorem}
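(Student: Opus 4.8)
The plan is to reduce the theorem to the a priori bound $\|\one_{\R_+^d}f\|\lesssim\|f\|$, in the respective $B$- or $F$-norm, for $f$ in a dense subclass (say Schwartz functions, with the usual adjustments when $q=\infty$), to prove this bound directly for $0<s<\frac{1+\gamma}{p}$, to obtain it for $-\frac{1+\gamma'}{p'}<s<0$ by duality, and to fill in $s=0$ and the remaining values of $q$ by interpolation. The only property of $\one_{\R_+^d}$ that enters is its smoothness relative to the weight. Since $\one_{\R_+^d}(x',t)=\one_{(0,\infty)}(t)$ depends on the last coordinate alone and $\widehat{\one_{(0,\infty)}}$ coincides, away from the origin, with $(i\xi)^{-1}$, its Littlewood--Paley blocks are $\Delta_k\one_{\R_+^d}(x',t)=\Psi(2^kt)$ for a fixed Schwartz function $\Psi$; a change of variables then gives $\|\Delta_k\one_{\R_+^d}\|_{L^p(Q,w_\gamma)}\lesssim_Q 2^{-k(1+\gamma)/p}$ on every cube $Q$, so that, locally, $\one_{\R_+^d}\in L^\infty\cap B^{(1+\gamma)/p}_{p,\infty}(\R^d,w_\gamma)$. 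The gain $(1+\gamma)/p$ --- precisely the extra Besov smoothness supplied by the weight --- is the source of the threshold in the theorem. Everything below uses the weighted vector-valued Littlewood--Paley and Fefferman--Stein machinery, which is available because $w_\gamma\in A_p$ for $\gamma\in(-1,p-1)$.

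For $0<s<\frac{1+\gamma}{p}$ (hence $s<1$, as $\gamma<p-1$) I would split $\one_{\R_+^d}f=\Pi_1+\Pi_2+\Pi_3$ into paraproducts, $\Pi_1=\sum_k(S_{k-2}\one_{\R_+^d})\Delta_kf$, $\Pi_2=\sum_k(\Delta_k\one_{\R_+^d})S_{k-2}f$ and $\Pi_3=\sum_k(\Delta_k\one_{\R_+^d})\widetilde\Delta_kf$ (with $\widetilde\Delta_k$ summing the neighbouring blocks). The term $\Pi_1$ is bounded on $B^s_{p,q}(\R^d,w_\gamma;X)$ and $F^s_{p,q}(\R^d,w_\gamma;X)$ for every $s\in\R$, using only $\|S_{k-2}\one_{\R_+^d}\|_\infty\lesssim1$ and that its summands are spectrally localized in $2^k$-annuli; the diagonal term $\Pi_3$ is bounded whenever $s>0$, using only $\|\Delta_k\one_{\R_+^d}\|_\infty\lesssim1$ and the reassembly of series whose summands have $2^k$-ball spectra. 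The substantial term is $\Pi_2$: here $\|\Delta_k\one_{\R_+^d}\|_\infty$ carries no decay, and one must use instead that $\Delta_k\one_{\R_+^d}(x',t)=\Psi(2^kt)$ is rapidly decaying off the slab $\{|t|\lesssim2^{-k}\}$; combining this with $|S_{k-2}f|\lesssim Mf$, the boundedness of the Hardy--Littlewood maximal operator on $L^p(\R^d,w_\gamma)$, and a summation over the dyadic slabs in the $t$-variable, the bound for $\Pi_2$ reduces to the weighted vector-valued Hardy inequality
\[
\bigl\||t|^{-s}f\bigr\|_{L^p(\R^d,w_\gamma;X)}\ \lesssim\ \|f\|_{B^s_{p,q}(\R^d,w_\gamma;X)}\quad(\text{resp. }F^s_{p,q}),\qquad 0<s<\tfrac{1+\gamma}{p}.
\]
(The same reduction is obtained, perhaps more transparently, from the characterization of these spaces by first-order differences: splitting $\Delta_h(\one_{\R_+^d}f)(x)$ according to whether the increment $h$ crosses the hyperplane $\{t=0\}$, one gets $\one_{\R_+^d}(x)\Delta_hf(x)$ off the hyperplane, while on the crossing set necessarily $|t|\le|h|$, and integrating out $h$ there produces the same Hardy inequality.) This Hardy inequality --- which, unlike its $H^{s,p}$-counterpart behind Theorem~\ref{thm:1}, needs no UMD hypothesis --- is the crux of the matter; I would either quote it from the preceding sections or establish it here by the same Littlewood--Paley/paraproduct technique.

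For $-\frac{1+\gamma'}{p'}<s<0$ I would dualize. As $\one_{\R_+^d}$ is real-valued, multiplication by it is formally self-adjoint for the $L^2$ pairing, and $B^s_{p,q}(\R^d,w_\gamma;X)$ is normed through that pairing by $B^{-s}_{p',q'}(\R^d,w_{\gamma'};X^*)$ --- here $w_\gamma^{1-p'}=w_{\gamma'}$ and $\gamma'=-\gamma/(p-1)$, which again lies in $(-1,p'-1)$ --- and likewise for the $F$-scale. Since $0<-s<\frac{1+\gamma'}{p'}$, the positive-smoothness bound already proved, applied with the admissible data $(p',q',\gamma',X^*)$, shows that $\one_{\R_+^d}$ is a pointwise multiplier on $B^{-s}_{p',q'}(\R^d,w_{\gamma'};X^*)$, and the norming identity transfers this to $B^s_{p,q}(\R^d,w_\gamma;X)$; the same applies to $F$. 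Finally, $s=0$ is reached by complex interpolation between a small positive and a small negative smoothness, and the remaining $q\in[1,\infty]$ follow by interpolation in the index $q$. The one genuinely nontrivial ingredient in this scheme is the weighted vector-valued Hardy inequality displayed above; the rest is bookkeeping with paraproducts, duality, and interpolation.
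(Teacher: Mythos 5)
Your outline for $0<s<\frac{1+\gamma}{p}$ (paraproducts, with the term $\sum_k(\Delta_k\one_{\R_+^d})S^{k-2}f$ reduced to a weighted Hardy inequality) is the classical route of Triebel and Runst--Sickel, and it can be made to work; but as written it has two genuine gaps measured against the statement you are asked to prove. First, the Hardy inequality $\||t|^{-s}f\|_{L^p(\R^d,w_\gamma;X)}\lesssim\|f\|_{F^s_{p,q}(\R^d,w_\gamma;X)}$, $0<s<\frac{1+\gamma}{p}$, is the entire content of the positive-smoothness case, and it is neither available ``from the preceding sections'' (the paper never proves a Hardy inequality) nor established by you; saying you would prove it ``by the same Littlewood--Paley/paraproduct technique'' defers precisely the step everything hinges on. Second, and more seriously, the passage to $s<0$ by duality does not prove the theorem in the stated generality: the theorem asserts the result for an \emph{arbitrary} Banach space $X$ and \emph{all} $q\in[1,\infty]$. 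Your norming identity $\|f\|_{\mathcal A^s_{p,q}(w_\gamma;X)}\eqsim\sup\{|\lb f,g\rb|:\|g\|_{\mathcal A^{-s}_{p',q'}(w_{\gamma'};X^*)}\le1\}$ requires duality identifications that use the Radon--Nikod\'ym property of $X^*$ (resp.\ $X^{**}$) and break down at the endpoints of the $F$-scale: for $q=\infty$ one would have to pass through $F^{-s}_{p',1}(X^*)^*=F^s_{p,\infty}(X^{**})$, and for $q=1$ one lands in a bidual. The introduction of the paper flags exactly this: the difference-norm/duality approach ``excludes the important cases $F^s_{p,1}$ and $F^s_{p,\infty}$ as well as non-reflexive underlying spaces $X$.'' Your interpolation step at $s=0$ inherits the same problem (complex interpolation of weighted vector-valued $F$-spaces with $q=\infty$ is not justified, and real interpolation in $s$ only produces $B$-spaces).

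The paper's proof is designed to avoid both issues. It cuts off, $\one_{\R_+^d}=\one_{\R_+^d}(1-\phi)+\one_{\R_+^d}\phi$, notes via Lemma \ref{chi} that $\one_{\R_+^d}\phi\in B^{\frac{1+\mu}{r}}_{r,\infty}(\R,w_\mu)\cap L^\infty$ for \emph{all} $r\in(1,\infty)$ and $\mu\in(-1,r-1)$ (not only for $(r,\mu)=(p,\gamma)$ as in your computation), and then invokes Theorem \ref{multiplication-esti}. There the two nontrivial paraproducts are estimated for the whole range $-\frac{1+\gamma'}{p'}<s<\frac{1+\gamma}{p}$ at once, negative $s$ included, by H\"older's inequality in the last coordinate between mixed-norm spaces $L^{p(p_1)}(w_{\gamma_1})$ and $L^{p(p_2)}(w_{\gamma_2})$ with $p_1<p<p_2$, combined with the discrete Jawerth--Franke embeddings of Proposition \ref{prop:jf_discrete}; the freedom in $(r,\mu)$ replaces both your Hardy inequality and your duality step, and the microscopic improvement $F_{p,\infty}\to F_{p,1}$ in Lemmas \ref{multiplication3} and \ref{multiplication4} makes the estimate uniform in $q$. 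If you want to salvage your approach, you must either prove the weighted vector-valued Hardy inequality and then restrict the theorem to reflexive $X$ and $1<q<\infty$, or replace the duality step by a direct argument for $s\le 0$ of the Jawerth--Franke type.
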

Corollary \ref{cor:Lipschitz-indicator} can be extended to the setting of $B$- and $F$-spaces as well.

Another result, which is also due to Strichartz \cite{Strich67} in the unweighted scalar case, is devoted to the pointwise multiplication with bounded $H^{s,p}$-functions and motivated by power nonlinearities.  A special case of Theorem \ref{thm:lastone} is the following, where we can allow for general weights $w\in A_p$. The notion of the type of a Banach space is explained in Section \ref{sec:type}. For instance, in the theorem one can choose for $X$ one of the classical function spaces $L^r$, $W^{\alpha,r}$, $H^{\alpha,r}$, $B_{r,q}^\alpha$ or $F_{r,q}^\alpha$, provided $q,r\in [2,\infty)$.

\begin{theorem}\label{thm:3} Let $X$ be a \emph{UMD} Banach space which has type $2$, and let $s>0$, $p\in(1,\infty)$ and $w\in A_p$. Then there is a constant $C> 0$ such that for all $m\in H^{s,p}(\R^d, w)\cap L^\infty(\R^d)$ and  $f\in H^{s,p}(\R^d,w;X)\cap L^\infty(\R^d;X)$ one has
\begin{align*}
\|mf\|_{H^{s,p}(\R^d,w;X)} \leq C \big(\|m\|_{L^\infty(\R^d)} \|f\|_{H^{s,p}(\R^d,w;X)} + \|m\|_{H^{s,p}(\R^d, w)} \|f\|_{L^\infty(\R^d;X)}\big).
\end{align*}
\end{theorem}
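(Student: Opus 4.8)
The plan is to use the paraproduct decomposition, which is the central tool announced in the abstract. Writing $m$ and $f$ in terms of their Littlewood--Paley pieces $m = \sum_j \Delta_j m$ and $f = \sum_k \Delta_k f$, one splits the product into the three standard paraproducts: $mf = \Pi_1(m,f) + \Pi_2(m,f) + \Pi_3(m,f)$, where $\Pi_1$ collects the terms with $j \ll k$ (so the frequency of $f$ dominates), $\Pi_2$ the terms with $k \ll j$ (frequency of $m$ dominates), and $\Pi_3$ the diagonal terms $j \sim k$. By symmetry of the asserted estimate in the roles of $m$ (which is only in $L^\infty$ together with $H^{s,p}$, scalar-valued) and $f$ (in $L^\infty$ and $H^{s,p}$, $X$-valued), it suffices to estimate $\Pi_1$ and $\Pi_3$; the term $\Pi_2$ is handled exactly like $\Pi_1$ after interchanging $m$ and $f$, using that $m$ is scalar so no UMD or type is needed on its factor.

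For $\Pi_1(m,f) = \sum_k (S_{k-N} m)\, \Delta_k f$, the spectrum of the $k$-th summand is contained in an annulus of size $\sim 2^k$, so applying the Bessel potential of order $s$ and using a vector-valued Littlewood--Paley/Plancherel--P\'olya estimate in the weighted space $L^p(\R^d,w;X)$ — valid since $w \in A_p$ and $X$ is UMD — we can bound $\|\Pi_1(m,f)\|_{H^{s,p}(\R^d,w;X)}$ by the $L^p(w;X)$-norm of the square-function $\big(\sum_k 2^{2ks} |(S_{k-N}m)\Delta_k f|^2\big)^{1/2}$. Pulling out $\|m\|_{L^\infty}$ from the low-frequency factor $S_{k-N}m$, this is controlled by $\|m\|_{L^\infty} \big\|\big(\sum_k 2^{2ks}|\Delta_k f|^2\big)^{1/2}\big\|_{L^p(w;X)}$, which is comparable to $\|m\|_{L^\infty}\|f\|_{H^{s,p}(w;X)}$ by the converse Littlewood--Paley estimate. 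Here is the one point where \emph{type $2$} enters essentially: to pass between the randomized Littlewood--Paley decomposition (which characterizes $H^{s,p}(w;X)$ in a UMD space) and the genuine square function used above one needs the Gaussian/Rademacher square-function domination that holds in type-$2$ spaces; equivalently, type $2$ is what lets us treat the $X$-valued square function $(\sum 2^{2ks}|\Delta_k f|^2)^{1/2}$ as in the scalar case. The scalar factor $S_{k-N}m$ causes no trouble.

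For the diagonal term $\Pi_3(m,f) = \sum_k \Delta_k m\, (\wh\Delta_k f)$ (with $\wh\Delta_k$ a fattened block), the spectra of the summands are no longer lacunary — they sit in balls of radius $\sim 2^k$ rather than annuli — so one cannot directly apply the square-function estimate after the Bessel potential. The standard remedy, which I expect to be the main obstacle to carry out cleanly in the weighted vector-valued setting, is: since $s > 0$, regroup dyadically and use that $\|\sum_k g_k\|_{H^{s,p}} \lesssim \|(\sum_k 2^{2ks}|g_k|^2)^{1/2}\|_{L^p}$ remains valid when $\supp \wh g_k \subset \{|\xi| \lesssim 2^k\}$ \emph{provided} $s>0$ — this is exactly where the hypothesis $s>0$ (rather than $-\tfrac1{p'}<s$) is used, and it again rests on an $A_p$-weighted, UMD-valued, type-$2$ Littlewood--Paley inequality of the form in Section \ref{sec:type}. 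Granting that, we bound $\|\Pi_3(m,f)\|_{H^{s,p}(w;X)}$ by $\|(\sum_k 2^{2ks}|\Delta_k m|^2 \|\wh\Delta_k f\|_X^2)^{1/2}\|_{L^p(w)}$; estimating $\|\wh\Delta_k f\|_X \le \|f\|_{L^\infty(X)}$ pointwise and pulling it out gives $\|f\|_{L^\infty(X)}\,\|(\sum_k 2^{2ks}|\Delta_k m|^2)^{1/2}\|_{L^p(w)} \lesssim \|f\|_{L^\infty(X)}\|m\|_{H^{s,p}(w)}$, which is the second term on the right-hand side. Combining the three estimates and using that everything is well-defined on the dense subspace of Schwartz-type functions (then extending by the a priori bound) yields the claim.
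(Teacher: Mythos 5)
Your overall architecture --- the paraproduct decomposition, the use of $s>0$ for the ball-supported (diagonal) pieces, and the use of type $2$ to pass from an $\ell^2$-valued bound back into $H^{s,p}$ --- is the same as the paper's (Proposition \ref{multiplication-Hinfty} specialized to $\tau=2$, plus $F^s_{p,2}(\R^d,w)=H^{s,p}(\R^d,w)$ for scalar $m$). Your treatments of the diagonal term and of the term where $m$ carries the dominant frequencies are essentially correct and match the paper: there the $F^s_{p,2}$-quantity lands on the \emph{scalar} function $m$, where $\|m\|_{F^s_{p,2}(\R^d,w)}\eqsim\|m\|_{H^{s,p}(\R^d,w)}$ is classical weighted Littlewood--Paley theory, and on the output side the embedding $F^s_{p,2}(\R^d,w;X)\hookrightarrow H^{s,p}(\R^d,w;X)$ is exactly what type $2$ provides (Proposition \ref{prop:type-embed}), with Lemma \ref{para2} handling the ball-supported spectra when $s>0$.

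The gap is in your estimate of $\Pi_1$. After pulling out $\|m\|_{L^\infty}$ you invoke a ``converse Littlewood--Paley estimate'' of the form $\big\|\big(\sum_k 2^{2ks}\|S_kf\|_X^2\big)^{1/2}\big\|_{L^p(\R^d,w)}\lesssim\|f\|_{H^{s,p}(\R^d,w;X)}$, i.e.\ the embedding $H^{s,p}(\R^d,w;X)\hookrightarrow F^s_{p,2}(\R^d,w;X)$. Type $2$ gives the \emph{opposite} embedding $F^s_{p,2}\hookrightarrow H^{s,p}$ (Rademacher sums are dominated by $\ell^2$-sums); the direction you need is cotype $2$, and type $2$ together with cotype $2$ forces $X$ to be renormable as a Hilbert space. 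For the intended examples such as $X=L^r$ with $r>2$ (type $2$, cotype $r$) this step is therefore false, and no deterministic square function of $f$ is controlled by $\|f\|_{H^{s,p}(\R^d,w;X)}$. The correct treatment, which is the paper's Lemma \ref{multiplication1}, never leaves the randomized decomposition for this term: by Proposition \ref{prop:UMDHisF} one estimates $\big\|\sum_k r_k 2^{sk}(S^{k-2}m)(S_kf)\big\|_{L^p(\Omega;L^p(\R^d,w;X))}$ directly, using the $\mathcal R$-boundedness of $(S_n)_{n\geq 0}$ on $L^p(\R^d,w;X)$ (Lemma \ref{Sk-Rbounded}) and the uniform bound $\sup_{k,x}|S^km(x)|\leq C\|m\|_{L^\infty}$, which for scalar $m$ is an $\mathcal R$-bound; this requires only UMD, no type or cotype. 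A minor further point: the closing appeal to density does not quite work, since $f\in H^{s,p}\cap L^\infty$ is not approximated in the $L^\infty$-norm by Schwartz functions; convergence of the paraproduct series should instead be obtained from the criteria in Lemma \ref{para2} (respectively from the convergence established inside Lemma \ref{multiplication1}).
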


In Proposition  \ref{multiplication-Hinfty} a variant of this estimate is given for operator-valued multipliers $m$, i.e., $m(x)\in \calL(X,Y)$ for  $x\in \R^d$ with UMD spaces $X,Y$. In this case we have to assume that the image of $m$ is $\mathcal R$-bounded (see Section \ref{subsec:UMD} for more information), the sup-norm in the above estimate is replaced by its $\mathcal R$-bound $\mathcal R(m)$ and the $H$-norm of $m$ is replaced by an $F$-norm depending on the type of $Y$.\smallskip

Our motivation to consider the weighted vector-valued setting is the maximal $L^p$-$L^q$-maximal regularity approach to parabolic evolution equations, and further the approach based on the weights $\text{dist}(\cdot,\partial\Omega)^\gamma$ to treat problems with rough boundary data. In the forthcoming paper \cite{MeyVer4} we apply the multiplier results to extend Seeley's  characterization of complex and real interpolation spaces of Sobolev spaces with boundary conditions to the weighted vector-valued case. This allows, for instance, to characterize the fractional power domains of the time derivative with zero initial conditions on $L^p(\R_+,w_\gamma;X)$ and on $F_{p,q}^0(\R_+,w_\gamma;X)$.\smallskip

In the rest of this introduction we explain the techniques employed in the proofs of the above results and the difficulties arising in the vector-valued setting. \smallskip

Strichartz' proof of the multiplier assertion for the characteristic function on $H^{s,p}(\R^d)$ is based on a difference norm for these spaces, see \cite[Section 2]{Strich67}. It generalizes to $H$-spaces with values in a Hilbert space, see \cite[Section 6.1]{Walker}. In the general vector-valued case, such a norm does not seem to be available for $H^{s,p}(\R^d;X)$, even if $X$ has UMD. The vector-valued analogue of the difference norm leads to the Triebel-Lizorkin space $F^{s}_{p,2}(\R^d;X)$, see \cite[Section 2.5.10]{Tri83} and \cite[Theorem 6.9]{Triebel3}. But one has
$$ H^{s,p}(\R^d;X) = F^{s}_{p,2}(\R^d;X),$$
i.e., the usual Littlewood-Paley decomposition for the $H$-spaces, if and only if $X$ can be renormed as a Hilbert space (see \cite{HaMe96}, and Proposition \ref{prop:type-embed} for a refinement of this assertion in terms of type and cotype of $X$). As a substitute, a randomized Littlewood-Paley decomposition is available if $X$ has UMD. This result is originally due to Bourgain \cite{Bou2} and McConnell \cite{McC84}. In Section \ref{section:UMD} we derive such a decomposition for the weighted spaces $H^{s,p}(\R^d,w;X)$ with $A_p$-weights $w$, essentially as a consequence of \cite{HH12}. As a byproduct, we also obtain that these spaces form a complex interpolation scale.

In the general vector-valued case, difference norms are still available for $F$- and $B$-spaces with positive smoothness. As in \cite[Section 2.8.6]{Tri83} one could use these norms to prove the multiplier property of $\one_{\R_+^d}$. At least in the reflexive range, the case of negative smoothness then follows from a duality argument. However, this excludes the important cases $F_{p,1}^s$ and $F_{p,\infty}^s$ as well as non-reflexive underlying spaces $X$. For the Slobodetskii spaces $W$ and Besov spaces $B$, the multiplier result can also be derived as in \cite{Gri63} from real interpolation with Dirichlet boundary conditions. For real interpolation spaces quite convenient norms are available.  However, the $H$- and the $F$- spaces cannot be obtained by real interpolation.\smallskip

Theorem \ref{thm:1} will be a consequence of the following estimate, which is valid under the assumptions on the parameters as in the theorem:
\begin{equation}\label{intro-est}
 \|mf\|_{H^{s,p}(\R^d,w_\gamma;X)} \leq C \big( \|m\|_{B_{r,\infty}^{\frac{1+\mu}{r}}(\R,w_\mu)} + \|m\|_\infty\big)\|f\|_{H^{s,p}(\R^d,w_\gamma;X)}
\end{equation}
Here $r$ and $\mu$ can be chosen in a range which depends on the other parameters and $m$ only depends on the last coordinate of $\R^d$. After a suitable cut-off, the characteristic function $\one_{\R_+^d}$ belongs to the Besov space $B_{r,\infty}^{\frac{1+\mu}{r}}(\R,w_\mu)$ for all $r\in (1,\infty)$ and $\mu\in (-1,r-1)$ (see Lemma \ref{chi}). In this context $\one_{\R_+^d}$ is considered to depend on the last variable only. Together with \eqref{intro-est}, this yields Theorem \ref{thm:1}.

The estimate \eqref{intro-est} is shown in Theorem \ref{multiplication-esti}. Also here the more general case of an operator-valued $m$ is considered, where as before the sup-norm of $m$ is replaced by its $\mathcal R$-bound. It is analogous as for unweighted, scalar-valued $B$- and $F$-spaces, see \cite{Franke86, Marschall, RS96, Sickel87, Tri83} and in particular \cite[Section 4.6]{RS96}. Similar to these references, its proof is based on the paraproduct technique as introduced by Bony (see e.g. \cite{Bony}). For pointwise multipliers this method was first employed by Peetre \cite{Peetre76} and Triebel \cite{Tri83} in order to treat the case of $B$- and $F$-spaces in the full parameter range $p,q\in (0,\infty]$. For more recent developments in the context of paraproducts in a UMD-valued setting we refer to \cite{HytWeis10, MP12}.

The idea of the paraproduct approach is as follows, see also \cite[Section 4.4]{RS96}. For a function $\varphi$ with $\wh \varphi \in C_c^\infty(\R^d)$ and $\wh \varphi (0) = 1$ one sets $S^l f = \F^{-1}(\wh \varphi(2^{-l}\cdot) \wh f)$, such that $S^l f \to f$ as $l\to \infty$ in the sense of distributions. One defines the product of two distributions $m$ and $f$ as
$$mf = \lim_{l\to \infty} S^l m \cdot S^l f,$$
whenever this limit exists in the distributional sense. This extends the pointwise product of smooth functions. Observe that $S^l m \cdot S^l f$ is well-defined in a pointwise sense since the factors have compact Fourier support and are therefore smooth. Now one decomposes this limit into the sum of three series $\Pi_1(m,f)$, $\Pi_2(m,f)$ and $\Pi_3(m,f)$, the paraproducts, such that
$$mf = \Pi_1(m,f) + \Pi_2(m,f) + \Pi_3(m,f),$$
see Section \ref{sec:paraproducts} for details. These collect different sizes of Fourier supports of $m$ and $f$, respectively, and are thus estimated in different ways.

The estimate of $\Pi_1(m,f)$, in which the $m$-factors have large Fourier supports, is based on the randomized Littlewood-Paley decomposition for  $H^{s,p}(\R^d,w;X)$. It yields
\begin{equation}\label{intro-esti-1}
 \|\Pi_1(m,f)\|_{H^{s,p}(\R^d,w;X)} \leq C \|m\|_\infty \|f\|_{H^{s,p}(\R^d,w;X)},
\end{equation}
see Lemma \ref{multiplication1}. An analogous result holds with $H^{s,p}$ replaced by $F_{p,q}^s$ and $B_{p,q}^s$, where one can directly use the Littlewood-Paley decomposition from the definition of these spaces and thus does not require $X$ to have UMD (see Lemma \ref{multiplication2}).

The other two paraproducts are estimated in endpoint type Triebel-Lizorkin norms to the result
\begin{equation}\label{intro-esti-2}
 \|\Pi_i(m,f)\|_{F^{s}_{p,1}(\R^d,w_\gamma;X)} \leq C \|m\|_{B_{r,\infty}^{\frac{1+\mu}{r}}(\R, w_\mu)} \|f\|_{F^{s}_{p,\infty}(\R^d,w_\gamma;X)}, \qquad i=2,3,
\end{equation}
see the Lemmas \ref{multiplication3} and \ref{multiplication4}. As in \cite{Franke86} and \cite[Section 4.4]{RS96}, the proofs are based on Jawerth-Franke type embeddings and weighted estimates of series in spaces of entire analytic functions. These rather technical results are considered in detail in Appendix \ref{sec:analytic}.

Observe that in \eqref{intro-esti-2} there is a smoothing in the microscopic parameter $q$. Since
$$F^{s}_{p,1} \hookrightarrow F^{s}_{p,q} \hookrightarrow F^{s}_{p,\infty}, \qquad q\in [1,\infty],$$
on the left-hand side of \eqref{intro-esti-2} we have the smallest $F$-space and on the right-hand side of \eqref{intro-esti-2} we have the largest $F$-space for fixed $s$ and $p$. The smoothing can be employed for the $H$-spaces as follows: since
$$F^{s}_{p,1}(\R^d,w;X) \hookrightarrow H^{s,p}(\R^d,w;X)\hookrightarrow F^{s}_{p,\infty}(\R^d,w;X)$$
for arbitrary Banach spaces $X$ and weights $w\in A_p$ (see \cite{SchmSi05} and \cite[Proposition 3.12]{MeyVer1}), the estimate  \eqref{intro-esti-2} immediately gives
\begin{equation*}
 \|\Pi_i(m,f)\|_{H^{s,p}(\R^d,w_\gamma;X)} \leq C \|m\|_{B_{r,\infty}^{\frac{1+\mu}{r}}(\R, w_\mu)} \|f\|_{H^{s,p}(\R^d,w_\gamma;X)}, \qquad i=2,3.
\end{equation*}
In particular, the smoothing effect in \eqref{intro-esti-2} on the microscopic scale  allows to avoid the randomized Littlewood-Paley decomposition in the estimates of $\Pi_2$ and $\Pi_3$.

The idea to treat vector-valued $H$-spaces by considering the corresponding $F$-spaces and employing that many of their properties are independent of the microscopic parameter $q$ is due to Schmeisser $\&$ Sickel \cite{SchmSiunpublished} in the context of traces, see also \cite{MeyVer2, SSS}. \smallskip

This paper is organized as follows. In Section \ref{sec:prel} we introduce the weighted function spaces and in Section \ref{section:UMD} we consider the randomized Littlewood-Paley decomposition for weighted Bessel-potential spaces. The paraproducts are estimated in Section \ref{sec:Point}, and these results are applied in Section \ref{sec:p-mult} to obtain our main results on pointwise multiplication. In Appendix \ref{sec:analytic} we prove the required auxiliary results for spaces of entire analytic functions. \smallskip

\textbf{Notations.} Generic positive constants are denoted by $C$. For $x\in \R^d$ we write
$$x = (x',t), \qquad x'\in \R^{d-1}, \qquad t\in \R.$$
We let $\N = \{1, 2, 3, \ldots\}$ and $\N_0 = \N\cup\{0\}$.
 Throughout, $X$ and $Y$  are complex Banach spaces. It will explicitly be stated if further properties as UMD are assumed. The space of bounded linear operators from $X$ to $Y$ is denoted by $\calL(X,Y)$, and $\calL(X) = \calL(X,X)$. The Schwartz class is denoted by $\Schw(\R^d;X)$, and we write $\TD(\R^d;X) = \calL(\Schw(\R^d);X)$ for the $X$-valued tempered distributions. The Fourier transform is denoted by $\widehat{f}$ or $\F f$. For $\sigma = k + \sigma_*$ with  $k\in \N_0$ and $\sigma_*\in [0,1)$ we denote by $BC^\sigma(\R^d; X)$ the space of $C^k$-functions with bounded derivatives and $\sigma_*$-H\"older continuous $k$-th derivatives.

\section{Preliminaries\label{sec:prel}}
In this section we briefly recall some notions and facts from the Fourier analytic approach to function spaces (see \cite{Tri83}, and for the weighted case \cite{Bui82, HS08}). For the vector-valued setting we refer to \cite{SSS, SchmSiunpublished,Tr01} and \cite[Sections 2 and 3]{MeyVer1}.

\subsection{Muckenhoupt weights}\label{sec:Mucken}
A function $w:\R^d\to [0,\infty)$ is called a weight if $w\in L^1_{\text{loc}}(\R^d)$ and if it is positive almost everywhere on $\R^d$. For $p\in (1,\infty)$ the Muckenhoupt class of weights on $\R^d$ is denoted by $A_p$ or $A_p(\R^d)$, and $A_\infty = \bigcup_{p> 1} A_p$ (see \cite[Chapter 9]{GraModern} for the general theory). We are mainly interested in anisotropic power weights $w$ of the form
$$w_{\gamma}(x',t) = |t|^\gamma, \qquad  x=(x',t) \in \R^d, \qquad x'\in \R^{d-1},\qquad t \in \R.$$
This notation will be used throughout the rest of the paper. Here $w_{\gamma} \in A_p$ if and only if $\gamma \in (-1,p-1)$, see \cite[Example 1.5]{HS08}. For $w\in A_\infty$ the norm of $L^p(\R^d,w;X)$ is defined by
$$\|f\|_{L^p(\R^d,w;X)} = \left ( \int_{\R^d} \|f(x)\|_X^p w(x) \, dx\right)^{1/p}.$$
For $f\in L^1_{\text{loc}}(\R^d;X)$ the Hardy-Littlewood maximal operator $M$ is given by
$$(Mf)(x) = \sup_{r>0} \frac{1}{|B(x,r)|} \int_{B(x,r)} \|f(y)\|_X\,dy, \qquad x\in \R^d.$$
The operator $M$ is bounded on $L^p(\R^d,w;X)$ if and only if $w\in A_p$. More generally, the weighted Fefferman-Stein maximal inequality (see \cite[Theorem 3.1]{AJ80}, and also \cite[Proposition 2.2]{MeyVer1}) says that for $p\in (1,\infty)$, $q\in (1,\infty]$, $w\in A_p$ and any $(f_k)_{k\geq 0} \subset L^p(\R^d,w;\ell^q(X))$ we have
\begin{equation}\label{Fefferman-Stein}
\|(Mf_k)_{k\geq 0} \|_{L^p(\R^d,w;\ell^q)} \leq C\|(f_k)_{k\geq 0}\|_{L^p(\R^d,w;\ell^q(X))}.
\end{equation}
In Lemma \ref{lem:maxoperator} in the appendix we consider a version of this inequality for mixed-norm spaces.

\subsection{Weighted function spaces}\label{subsec:spaces}
Let $\Phi(\R^d)$ be the collection of all sequences $(\varphi_k)_{k\geq 0} \subset \Schw(\R^d)$ such that
\begin{align*}
\wh{\varphi}_0 = \wh{\varphi}, \qquad \wh{\varphi}_1(\xi) = \wh{\varphi}(\xi/2) - \wh{\varphi}(\xi), \qquad \wh{\varphi}_k(\xi) = \wh{\varphi}_1(2^{-k+1} \xi), \quad k\geq 2, \qquad \xi\in \R^d,
\end{align*}
with a generator function $\varphi$ of the form
\begin{equation*}
 0\leq \wh{\varphi}(\xi)\leq 1, \quad  \xi\in \R^d, \qquad  \wh{\varphi}(\xi) = 1 \ \text{ if } \ |\xi|\leq 1, \qquad  \wh{\varphi}(\xi)=0 \ \text{ if } \ |\xi|\geq \frac32.
\end{equation*}
Observe that $\supp \wh{\varphi}_k \subseteq \{2^{k-1} \leq |\xi|\leq \frac{3}{2}2^{k}\}$ for $k\geq 1$. For $(\varphi_k)_{k\geq 0} \in \Phi(\R^d)$ and $f\in \TD(\R^d;X)$ we set
$$S_k f = \varphi_k * f = \F^{-1} ( \wh{\varphi}_k \wh{f}).$$
The norms of the Besov space $B$, the Triebel-Lizorkin space $F$ and the Bessel-potential space $H$ are for $s\in \R$, $p\in (1,\infty)$, $q\in [1,\infty]$,   $w\in A_\infty$ and $f\in {\mathscr S}'(\R^d;X)$ given by
\[ \|f\|_{B_{p,q}^s (\R^d,w;X)} = \Big\| \big( 2^{sk}S_k f\big)_{k\geq 0} \Big\|_{\ell^q(L^p(\R^d,w;X))},\]
\[ \|f\|_{F_{p,q}^s (\R^d,w;X)} = \Big\| \big( 2^{sk}S_k f\big)_{k\ge 0} \Big\|_{L^p(\R^d,w;\ell^q(X))}, \]
\[\|f\|_{H^{s,p}(\R^d,w;X)} = \|\F^{-1} [(1+|\cdot|^2)^{s/2} \wh{f} ]\|_{L^p(\R^d,w;X)}.\]
Each choice of $(\varphi_k)_{k\geq 0} \in \Phi(\R^d)$ leads to an equivalent norm for the $B$- and $F$-spaces. For $m\in \N_0$ we also consider  Sobolev spaces $W$, with norm
$$\|f\|_{W^{m,p}(\R^d,w;X)} = \Big(\sum_{|\alpha|\leq m} \|D^{\alpha} f\|_{L^p(\R^d,w;X)}^p\Big)^{1/p}.$$
By \cite[Lemma 3.8]{MeyVer1}, the space $\Schw(\R^d;X)$ is dense in each of the above spaces if $q<\infty$. A useful substitute for the lack of density in case $q=\infty$ is the Fatou property. If $E = B_{p,q}^s (\R^d,w;X)$ or $E = F_{p,q}^s (\R^d,w;X)$, it says that for $(f_n)_{n\geq 0}\subset E$ we have
\begin{equation}\label{fatou}
 \lim_{n\to \infty} f_n = f \text{ in }\TD(\R^d;X), \quad \liminf_{n\to \infty}\|f_n\|_{E}<\infty \quad \Longrightarrow \quad f\in E, \quad \|f\|_E \leq \liminf_{n\to \infty} \|f_n\|_E,
\end{equation}
see \cite[Proposition 2.18]{SSS}.
We have the elementary embeddings
\begin{equation}\label{eq:scaleBF}
B_{p,\min\{p,q\}}^{s} (\R^d,w;X)\hookrightarrow F_{p,q}^{s} (\R^d,w;X) \hookrightarrow B_{p,\max\{p,q\}}^{s} (\R^d,w;X),
\end{equation}
and if $1\leq q_0\leq q_1\leq \infty$, then
\begin{equation}\label{eq:monotony}
 B_{p,q_0}^s (\R^d,w;X)\hookrightarrow B_{p,q_1}^s (\R^d,w;X), \qquad F_{p,q_0}^s (\R^d,w;X)\hookrightarrow F_{p,q_1}^s (\R^d,w;X).
\end{equation}
Moreover, for $w\in A_p$, $s\in  \R$ and $m\in \N_0$,
\begin{align}\label{eq:TriebelLizorkinH}
F^{s}_{p,1}(\R^d,w;X)  \hookrightarrow H^{s,p}(\R^d,w;X) \hookrightarrow F^{s}_{p,\infty}(\R^d,w;X),\\
\label{eq:TriebelLizorkinW}
F^{m}_{p,1}(\R^d,w;X)  \hookrightarrow W^{m,p}(\R^d,w;X) \hookrightarrow F^{m}_{p,\infty}(\R^d,w;X).
\end{align}
where the embeddings for $F^{s}_{p,1}$ and $F^{m}_{p,1}$ even hold in case $w\in A_\infty$.

\begin{remark}\label{HW}
Note that $L^p(\R^d,w;X) = H^{0,p}(\R^d,w;X) = W^{0,p}(\R^d,w;X)$. But $H^{1,p}(\R^d;X) = W^{1,p}(\R^d;X)$ if and only if $X$ has the UMD property (see \cite{McC84,Zim89}), and $L^p(\R^d;X) = F_{p,2}^0(\R^d;X)$ if and only if $X$ can be renormed as a Hilbert space (see \cite{HaMe96} and \cite[Remark 7]{SchmSiunpublished}).
\end{remark}

\subsection{A difference norm for weighted Besov spaces}
For an integer $m\geq 1$ define
\[\Delta_h^m f(x) = \sum_{l =0}^m {{m}\choose{l }} (-1)^l f(x+(m-l)h), \qquad x,h\in \R^d.\]
For $f\in L^p(\R^d,w;X)$ let
\[[f]_{B^s_{p,q}(\R^d,w;X)}^{(m)} = \Big(\int_0^\infty t^{-sq}  \Big\|t^{-d}\int_{|h|\leq t} \|\Delta_h^m f\|_X \, dh\Big\|_{L^p(\R^d,w)}^{q} \, \frac{dt}{t}\Big)^{1/q},\]
with the usual modification if $q=\infty$, and set
\[\nn f\nn_{B^s_{p,q}(\R^d,w;X)}^{(m)} = \|f\|_{L^p(\R^d,w;X)} + [f]_{B^s_{p,q}(\R^d,w;X)}^{(m)}.\]

One can extend a well-known result on the equivalence of norms to the weighted case
(cf. \cite{SchmSiunpublished}, \cite[Section 2.5.10]{Tri83} and \cite[Theorem 6.9]{Triebel3}). A similar result for weighted $F$-spaces is stated  in \cite[Proposition 2.3]{MeyVer2}.

\begin{proposition}\label{prop:Lpsmoothness-Besov} Let $s>0$, $p\in (1, \infty)$, $q\in [1, \infty]$ and $w\in A_p$. Let $m \in \N$ be such that $m>s$. There is a constant $C>0$ such that for all $f\in L^p(\R^d,w;X)$ one has
\begin{equation}\label{eq:equivnorm-Besov}
C^{-1} \|f\|_{B^s_{p,q}(\R^d,w;X)}\leq \nn f\nn_{B^s_{p,q}(\R^d,w;X)}^{(m)} \leq C\|f\|_{B^s_{p,q}(\R^d,w;X)},
\end{equation}
whenever one of these expressions is finite.
\end{proposition}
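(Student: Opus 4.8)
The plan is to establish the equivalence \eqref{eq:equivnorm-Besov} by proving the two inequalities separately, using the Littlewood-Paley decomposition $S_k f$ as the bridge between the Fourier-analytic norm and the difference norm. First I would record the elementary Fourier-analytic facts that make the argument run: for the lower estimate one needs a reproducing-type identity, namely that $S_k f$ can be recovered from $f$ via convolution with an $L^1$-normalized dilated bump, together with the finite-differences reproducing formula expressing $S_k f$ (or a closely related block) in terms of $\Delta_h^m f$ and a suitable kernel; for the upper estimate one needs the standard maximal-function pointwise bound $\|\Delta_h^m S_k f(x)\|_X \le C \min\{1, (2^k|h|)^m\} (M\|S_k f\|_X)(x)$ obtained from the Peetre–Fefferman–Stein estimate for entire functions of exponential type (these are exactly the kind of weighted estimates collected in Appendix \ref{sec:analytic} and in \cite{MeyVer1}).

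For the \textbf{upper bound} $\nn f\nn^{(m)}_{B^s_{p,q}} \lesssim \|f\|_{B^s_{p,q}}$, I would write $f=\sum_{k\ge 0}S_k f$ (convergence in $\TD$, which is legitimate once $f\in B^s_{p,q}$), insert this into $\Delta_h^m f$, and for each dyadic scale $2^{-j}\le t < 2^{-j+1}$ split the sum over $k$ at $k=j$. For $k\le j$ use the bound $\|\Delta_h^m S_k f\|_X \lesssim (2^k|h|)^m (M\|S_k f\|_X)$, for $k>j$ use $\|\Delta_h^m S_k f\|_X \lesssim M\|S_k f\|_X$; averaging in $h$ over $|h|\le t$, taking the $L^p(\R^d,w)$-norm, summing the geometric series in $k$ (here $0<s<m$ is exactly what makes both geometric series converge), and then taking the $\ell^q(dt/t)$-norm with a discrete Hardy inequality, yields a bound by $\|(2^{sk}M\|S_k f\|_X)_k\|_{\ell^q(L^p(w))}$, which by the weighted Fefferman–Stein inequality \eqref{Fefferman-Stein} is controlled by $\|f\|_{B^s_{p,q}}$. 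The $\|f\|_{L^p(w;X)}$ term is dominated by $\|f\|_{B^s_{p,q}}$ by \eqref{eq:monotony} and the trivial embedding into $L^p$.

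For the \textbf{lower bound} $\|f\|_{B^s_{p,q}} \lesssim \nn f\nn^{(m)}_{B^s_{p,q}}$, the $k=0$ block of the $B$-norm is just $\|S_0 f\|_{L^p(w;X)}\lesssim \|f\|_{L^p(w;X)}$. For $k\ge 1$ I would use a kernel representation $S_k f = \int_{\R^d} \Delta_h^m f(\cdot)\, \Psi_k(h)\,dh$ valid for a Schwartz function $\Psi_k$ obtained by dilation from a fixed $\Psi$ whose construction exploits that $\wh{\varphi}_k$ vanishes near the origin (so that division by $(e^{i\langle h,\xi\rangle}-1)$-type factors of order $m$ is harmless); this is the standard device, see \cite[Section 2.5.10]{Tri83}. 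Pulling the $L^p(\R^d,w)$-norm inside, bounding $\|\Delta_h^m f\|_X$ by its average over a ball of radius $\sim |h|$ up to the maximal operator and using \eqref{Fefferman-Stein} once more, one gets $\|2^{sk}S_k f\|_{L^p(w;X)}$ dominated by an integral over $h$ of $\|t^{-d}\int_{|h|\le t}\|\Delta_h^m f\|_X dh\|_{L^p(w)}$ against an $L^1(dt/t)$-summable-in-$k$ weight, and the $\ell^q$-summation plus Hardy's inequality gives $[f]^{(m)}_{B^s_{p,q}}$.

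The main obstacle is purely technical: carrying the two reproducing identities and the maximal-operator pointwise estimates through the \emph{weighted, vector-valued} setting with the correct uniformity in the dyadic parameter. The scalar unweighted case is classical (\cite{Tri83}), and the vector-valued extension is formal once one has (i) the weighted Fefferman–Stein inequality \eqref{Fefferman-Stein}, which is already available, and (ii) the Peetre-type maximal estimates for $X$-valued entire functions of exponential type on $L^p(\R^d,w)$, which are precisely the auxiliary results proved in Appendix \ref{sec:analytic}. One should also be mildly careful with the $q=\infty$ modification (replacing $\ell^q(dt/t)$ and $\ell^q$-sums by suprema, which only simplifies matters) and with the phrase ``whenever one of these expressions is finite'': the two inequalities are proved under the a priori assumption that the relevant side is finite, and density of $\Schw(\R^d;X)$ (for $q<\infty$) or the Fatou property \eqref{fatou} (for $q=\infty$) upgrades this to the stated bidirectional equivalence without any finiteness hypothesis beyond $f\in L^p(\R^d,w;X)$.
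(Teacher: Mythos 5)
The paper gives no proof of this proposition; it is stated as a weighted vector-valued extension of a classical equivalence and justified only by the citations to \cite[Section 2.5.10]{Tri83}, \cite[Theorem 6.9]{Triebel3} and \cite{SchmSiunpublished}. Your argument is precisely the standard proof from those sources, correctly carried over to the weighted vector-valued setting (the only bound to state with care is the pointwise estimate for $\Delta_h^m S_k f$, which as literally written needs the Peetre maximal function or the prior average over $|h|\le t$ rather than $M\|S_k f\|_X(x)$ alone, as you in effect acknowledge by invoking the Peetre--Fefferman--Stein estimate), so it matches the route the paper implicitly relies on.
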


It is often more convenient to work with  the $L^p(\R^d,w;X)$-modulus of smoothness, defined by
\[\omega_{p,w}^m(f,t) = \sup_{|h|\leq t} \|\Delta^m_h f\|_{L^p(\R^d,w;X)}, \qquad t>0.\]
In the unweighted case $w\equiv 1$, for any integer $m> s$ the expression
\[\| f\|_{B^{s}_{p,q}(\R^d,w;X)}^{(m)} = \|f\|_{L^p(\R^d,w;X)} + \Big(\int_0^\infty t^{-sq} \omega_{p,w}^m(f,t)^q \, \frac{dt}{t}\Big)^{1/q},\]
defines an equivalent norm on $B^{s}_{p,q}(\R^d,w;X)$ (modification if $q=\infty$). We do not know if this extends to the weighted setting. However, by Minkowski's inequality one has
\[\Big\|t^{-d} \int_{|h|\leq t} \|\Delta_h^m f\|_X \, dh\Big\|_{L^p(\R^d,w)} \leq t^{-d} \int_{|h|\leq t} \|\Delta_h^m f\|_{L^p(\R^d,w;X)} \, dh \leq C \sup_{|h|\leq t}\|\Delta_h^m f\|_{L^p(\R^d,w;X)}.\]
Therefore, one always has
\begin{equation}\label{remark-besov-norm}
 \nn f\nn_{B^{s}_{p,q}(\R^d,w;X)}^{(m)} \leq C \| f\|_{B^{s}_{p,q}(\R^d,w;X)}^{(m)}.
\end{equation}

\section{UMD-valued Bessel-potential spaces}\label{section:UMD}

In this section we derive a Littlewood-Paley decomposition for the spaces $H^{s,p}(\R^d,w;X)$, where $X$ has UMD and $w\in A_p$. As preparations we first recall some notions in this context and record a Mihlin multiplier theorem for $L^p(\R^d,w;X)$, which follows from the results of \cite{HH12}. We then give a first multiplication estimate for H\"older continuous functions and $H^{s,p}(\R^d,w;X)$, which is based on bilinear complex interpolation.

\subsection{UMD spaces, Rademacher functions and $\mathcal R$-boundedness}\label{subsec:UMD}
A Banach space $X$ is said to have UMD if for any probability space $(\Omega,\mathscr{A},\P)$ and $p\in (1, \infty)$ martingale differences are unconditional in $L^p(\Omega;X)$ (see \cite{Ama95, Bu3, RF} for a survey on the subject). The UMD property of a Banach space turns out to be equivalent to the boundedness of the vector-valued extension of the Hilbert transform on $L^p(\R;X)$. For this reason UMD is sometimes also called of class $\mathcal{H} \mathcal{T}$. Many other Fourier multipliers are known to be bounded in $L^p(\R^d;X)$ and in particular, the classical Mihlin Fourier multiplier theorem holds in the vector-valued setting if and only if $X$ has UMD, see \cite{Bou2,McC84,Zim89} (and Proposition \ref{prop:weightedmihlin} below).

Let us mention a few facts on UMD spaces (see \cite[Section III.4]{Ama95}).
\begin{enumerate}[(a)]
\item Hilbert spaces have UMD.
\item Closed subspaces and the dual of UMD spaces have UMD.
\item If $X$ has UMD, then $L^p(\Omega; X)$ has UMD for each $\sigma$-finite measure space $\Omega$ and $p\in (1,\infty)$.
\item The reflexive range of the classical function spaces such as $L^p$, $H^{s,p}$, $B_{p,q}^s$, $F_{p,q}^s$ have UMD.
\item UMD spaces are reflexive. Hence $L^1$, $\ell^1$, $L^\infty$, $C([0,1])$ and $c_0$ do not have UMD.
\end{enumerate}

 A sequence of random variables $(r_k)_{k\geq 0}$ on $\Omega$ is called a Rademacher sequence if $\P(\{r_k = 1\}) = \P(\{r_k=-1\}) = 1/2$ for $k\geq 0$ and $(r_k)_{k\geq 0}$ are independent. For instance, one can take $\Omega = (0,1)$ with the Lebesgue measure and $r_k(\omega) = \text{sign}[\sin(2^{k+1}\pi \omega)]$ for $\omega\in \Omega$.

A family of operators $\mathcal{T} \subset \calL(X,Y)$ is called $\mathcal R$-bounded, if some $p\in [1,\infty)$ there is a constant $C_p$ such that for all $N\geq 1$, for all $T_0,..., T_N \in \mathcal T$ and all $x_0,...,x_N\in X$ it holds that
$$\Big \|\sum_{k=0}^N r_k T_k x_k \Big\|_{L^p(\Omega;Y)} \leq C_p \Big\|\sum_{k=0}^N r_k  x_k \Big \|_{L^p(\Omega;X)}.$$
The infimum of all constants $C_p$ satisfying the above estimate is denoted by $\mathcal R_p(\mathcal T)$ and is called the $\mathcal R_p$-bound of $\mathcal T$. One can show that if the inequality is satisfied for one $p$, then it holds for all $p$. We often neglect the dependence of the $\mathcal R$-bound on $p$.
For further information on $\mathcal R$-boundedness we refer to \cite{DHP, KuWe}.

\subsection{Fourier multipliers}
For a symbol $m\in L^\infty(\R^d)$ we define the operator $T_m$ by
$$T_m: \Schw(\R^d;X) \to \TD(\R^d;X), \qquad T_m f = \mathcal \F^{-1} (m \wh f).$$
For $p\in [1,\infty)$ and $w\in A_\infty$ the Schwartz class $\Schw(\R^d;X)$ is dense in $L^p(\R^d;X)$, see \cite[Lemma 3.8]{MeyVer1}. The following Mihlin type multiplier theorem provides a sufficient condition for the boundedness of $T_m$. It is a simple consequence of \cite[Corollary 2.10]{HH12}. For the scalar case $X = \C$ we refer to \cite[Section IV.3]{GCRdF}. A version with operator-valued multiplier holds as well. For this one needs an $\mathcal R$-boundedness version of the condition \eqref{Mihlin-assum} (see \cite[Theorems 3.6 and 3.7]{HHN}, \cite[Theorem 4.4]{StrWe} and \cite{We}).

\begin{proposition}\label{prop:weightedmihlin}
Let $X$ have \emph{UMD}, $p\in (1, \infty)$ and $w\in A_p$. Assume that $m\in C^{d+2}(\R^d\setminus\{0\})$ satisfies
\begin{equation}\label{Mihlin-assum}
C_m = \sup_{|\alpha| \leq d+2}\sup_{\xi \neq 0} |\xi|^{|\alpha|} |D^{\alpha} m(\xi)| <\infty.
\end{equation}
Then $T_m$ extends to a bounded operator on $L^p(\R^d,w;X)$, and its operator norm only depends on $d$, $X$, $p$, $w$ and $C_m$.
\end{proposition}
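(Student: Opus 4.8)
\textbf{Proof proposal for Proposition~\ref{prop:weightedmihlin}.} The plan is to deduce the statement from \cite[Corollary 2.10]{HH12} (a weighted, UMD-valued Mihlin theorem of Hörmander type) by checking that the pointwise derivative bound \eqref{Mihlin-assum} implies the Hörmander-type condition used there. First I would recall what is needed: the cited result asserts that $T_m$ is bounded on $L^p(\R^d,w;X)$ for $w\in A_p$ whenever $X$ has UMD and $m$ satisfies a dyadic $L^2$-Hörmander condition of order $> d/2$ (together with integer-order derivative counts adapted to $d$), with the operator norm depending only on $d$, $X$, $p$, $w$ and the relevant constant of $m$. The classical fact that the pointwise Mihlin condition is strictly stronger than the Hörmander condition of any order $\le d/2 + \text{(something)}$ then does the job: since $|D^\alpha m(\xi)| \le C_m |\xi|^{-|\alpha|}$ for all $|\alpha| \le d+2 > d/2$, one has for each dyadic annulus $A_j = \{2^{j-1} \le |\xi| \le 2^{j+1}\}$ the bound
\[
\Big( 2^{-jd}\int_{A_j} |\xi|^{2|\alpha|}|D^\alpha m(\xi)|^2 \, d\xi \Big)^{1/2} \le C_m,
\]
uniformly in $j\in\Z$ and $|\alpha|\le d+2$, which is precisely the scale-invariant Hörmander-type hypothesis in \cite{HH12} (after covering $A_j$ by finitely many balls of radius $\sim 2^j$). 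Hence $m$ satisfies the hypotheses of \cite[Corollary 2.10]{HH12} with a constant controlled by $C_m$ and $d$.

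Concretely, the steps are: (i) fix a smooth dyadic partition of unity $(\psi_j)_{j\in\Z}$ on $\R^d\setminus\{0\}$ adapted to the annuli $A_j$; (ii) for each $j$ estimate $\|\psi_j(2^j\cdot) m(2^j\cdot)\|_{W^{k,2}}$ (or the appropriate fractional-order Sobolev norm $> d/2$) by $C_{d}\, C_m$ using the Leibniz rule and the homogeneity-type bound $|D^\alpha m(\xi)|\le C_m|\xi|^{-|\alpha|}$ — here the number of derivatives $d+2$ comfortably exceeds $d/2$ plus the integer slack required by \cite[Corollary 2.10]{HH12}; (iii) invoke \cite[Corollary 2.10]{HH12} to conclude boundedness of $T_m$ on $L^p(\R^d,w;X)$ with norm depending only on $d$, $X$, $p$, $w$, $C_m$; (iv) finally note that $\Schw(\R^d;X)$ is dense in $L^p(\R^d,w;X)$ for $p\in[1,\infty)$, $w\in A_\infty$ (by \cite[Lemma 3.8]{MeyVer1}), so the a priori estimate on Schwartz functions yields a genuine bounded extension of $T_m$, which is what the statement claims.

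The only mild obstacle is a bookkeeping one: matching the exact smoothness counting in \cite[Corollary 2.10]{HH12} — whether it is phrased with $\lfloor d/2\rfloor+1$ integer derivatives, with a fractional Sobolev exponent $s>d/2$, or in a Hörmander $L^2$-average form — and verifying that the dilation-normalized pieces $\psi_0(\cdot)m(2^j\cdot)$ have the required norm bounded uniformly in $j$ by a constant times $C_m$. This is entirely routine once the scaling is set up correctly, because $|\xi|^{|\alpha|}|D^\alpha m(\xi)|$ is invariant under the parabolic-free isotropic dilations $m\mapsto m(2^j\cdot)$; no cancellation or oscillation beyond what \eqref{Mihlin-assum} encodes is needed. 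I do not expect to use the $\mathcal R$-boundedness machinery at all here — that is only relevant for the operator-valued variant mentioned after the statement — so the scalar-multiplier reduction to \cite{HH12} is the whole content of the proof.
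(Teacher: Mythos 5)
Your reduction hinges on reading \cite[Corollary 2.10]{HH12} as a weighted, UMD-valued Fourier multiplier theorem with a H\"ormander-type hypothesis on the symbol, and the verification you carry out --- that the pointwise Mihlin bound \eqref{Mihlin-assum} implies uniform dyadic $L^2$ estimates for $m$ --- checks the hypotheses of that imagined theorem. But \cite[Corollary 2.10]{HH12} is not of that form: it is an $A_p$-weighted bound for vector-valued Calder\'on-Zygmund \emph{operators}, so its hypotheses are (a) a priori boundedness of $T_m$ on the \emph{unweighted} space $L^{p}(\R^d;X)$, and (b) a representation of $T_m$ by convolution with a kernel $K$ satisfying the standard estimates $|K(x)|\leq C|x|^{-d}$ and $|\nabla K(x)|\leq C|x|^{-(d+1)}$ for $x\neq 0$. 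Your argument establishes neither. Point (a) is exactly where the UMD property of $X$ enters, via the unweighted vector-valued Mihlin theorem (the paper cites \cite[Proposition 3]{Zim89}); no smoothness condition on a scalar symbol can supply this, since the conclusion fails for non-UMD $X$, and indeed your proof as written never actually uses UMD. Point (b) is a physical-space kernel estimate; it does follow from \eqref{Mihlin-assum} by the classical integration-by-parts argument (the paper cites the proof of \cite[Proposition VI.4.4.2]{Stein93}), and your dyadic $L^2$ computation is essentially the first step of that derivation, but you stop on the Fourier side and never pass to $K=\F^{-1}m$.

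So the obstacle is not the ``bookkeeping'' of matching derivative counts: the reduction target is misidentified, and the two substantive verifications the cited corollary actually requires --- unweighted UMD-valued boundedness of $T_m$ and Calder\'on-Zygmund kernel estimates for $\F^{-1}m$ --- are both missing. Your closing density remark (extending from $\Schw(\R^d;X)$ to $L^p(\R^d,w;X)$) is fine and agrees with the paper's setup.
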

\begin{proof}
By \cite[Corollary 2.10]{HH12} we have to verify that $T_m$ is a vector-valued Calder\'on-Zygmund operator, in the sense of \cite[Definition 2.6]{HH12}. Assumption \eqref{Mihlin-assum} for $\alpha\leq (1,\ldots,1)$ implies that $T_m$ belongs to $\calL(L^p(\R^d;X))$, see \cite[Proposition 3]{Zim89}. Further, $\F^{-1}m$ may be represented by a function $K\in C^1(\R^d\setminus\{0\})$ satisfying $|K(x)|\leq C|x|^{-d}$ and $|\nabla K(x)|\leq C|x|^{-(d+1)}$ for $x\neq 0$, see the proof of \cite[Proposition VI.4.4.2]{Stein93}. Hence $T_m$ is represented by the convolution with a singular kernel. We conclude that \cite[Corollary 2.10]{HH12} applies to $T_m$.
\end{proof}

\subsection{Equivalent norms and Littlewood-Paley theory}

The following characterizations can be deduced from Proposition \ref{prop:weightedmihlin}. We fix a Rademacher sequence $(r_k)_{k\geq 0}$ on a probability space $\Omega$, and a further a sequence $(\varphi_k)_{k\geq 0} \in \Phi(\R^d)$. Recall that $S_k f  = \varphi_k*f$.

\begin{proposition}\label{prop:UMDHisF}
Let $X$ have \emph{UMD}, $p\in (1, \infty)$  and $w\in A_p$. Then
\begin{equation}
 H^{m,p}(\R^d,w;X) = W^{m,p}(\R^d,w;X) \qquad  \text{ for all }\, m \in \N_0. \label{H=W}
\end{equation}
Moreover, for $s\in \R$ we have that $f\in \TD(\R^d;X)$ belongs to $H^{s,p}(\R^d,w;X)$ if and only if
\[\sup_{n\geq 0}\Big \| \sum_{k=0}^n r_k 2^{sk} S_k f\Big \|_{L^p(\O;L^p(\R^d,w;X))}<\infty.\]
In this case the series $\sum_{k\geq 0} r_k 2^{sk} S_k f$ converges in $L^p(\Omega; L^p(\R^d,w;X))$, and
\begin{equation}\label{eq:convergenceFrad}
\|f\|_{F^{s}_{p,\Rad}(\R^d,w;X)}= \Big\| \sum_{k\geq 0} r_k 2^{sk} S_k f\Big\|_{L^p(\O;L^p(\R^d,w;X))} = \sup_{n\geq 0}\Big \| \sum_{k=0}^n r_k 2^{sk} S_k f\Big \|_{L^p(\O;L^p(\R^d,w;X))}
\end{equation}
defines an equivalent norm on $H^{s,p}(\R^d,w;X)$.
\end{proposition}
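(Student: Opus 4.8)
The plan is to reduce everything to the weighted Mihlin multiplier theorem (Proposition \ref{prop:weightedmihlin}). First I would establish \eqref{H=W}. The Bessel potential operator $J_s = \F^{-1}(1+|\cdot|^2)^{s/2}\F$ is an isomorphism from $H^{s,p}(\R^d,w;X)$ onto $L^p(\R^d,w;X)$ essentially by definition, so it suffices to show that for $m\in\N_0$ the norm $\|J_m f\|_{L^p(\R^d,w;X)}$ is equivalent to $\sum_{|\alpha|\le m}\|D^\alpha f\|_{L^p(\R^d,w;X)}$. For the inequality $H^{m,p}\hookrightarrow W^{m,p}$ one writes, for each multi-index $\alpha$ with $|\alpha|\le m$, that $D^\alpha = T_{m_\alpha}\circ J_m$ where $m_\alpha(\xi)=(i\xi)^\alpha(1+|\xi|^2)^{-m/2}$; this symbol satisfies \eqref{Mihlin-assum} (it is smooth, bounded, and homogeneous-like of order $\le 0$, so the scaling derivative bounds hold), hence $T_{m_\alpha}$ is bounded on $L^p(\R^d,w;X)$ by Proposition \ref{prop:weightedmihlin}. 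Conversely, $J_m = T_{\sigma}\circ(\text{sum of }D^\alpha)$ with a suitable symbol $\sigma$; concretely $(1+|\xi|^2)^{m/2} = \sum_{|\alpha|\le m} c_\alpha (i\xi)^\alpha \, n_\alpha(\xi)$ for appropriate smooth Mihlin symbols $n_\alpha$, giving $W^{m,p}\hookrightarrow H^{m,p}$. Here the UMD hypothesis enters precisely through Proposition \ref{prop:weightedmihlin}; note that in the unweighted case \eqref{H=W} already requires UMD, cf. Remark \ref{HW}.

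Next I would treat the Littlewood-Paley characterization. One direction: suppose $f\in H^{s,p}(\R^d,w;X)$. I would show $\sum_{k\ge 0} r_k 2^{sk}S_k f$ converges in $L^p(\O;L^p(\R^d,w;X))$ with the stated norm bound. The idea is a randomized Mihlin argument: writing $g = J_s f \in L^p(\R^d,w;X)$, one has $2^{sk}S_k f = T_{\mu_k} g$ where $\mu_k(\xi) = 2^{sk}\wh\varphi_k(\xi)(1+|\xi|^2)^{-s/2}$, and on the Fourier support of $\wh\varphi_k$ one has $|\xi|\sim 2^k$, so $2^{sk}(1+|\xi|^2)^{-s/2}$ is comparable to a constant together with all its scaled derivatives, uniformly in $k$. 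Thus the family $\{\mu_k\}$ is a Mihlin family with uniform constants, and one obtains the randomized square-function bound
$$\Big\|\sum_{k=0}^n r_k 2^{sk}S_k f\Big\|_{L^p(\O;L^p(\R^d,w;X))} \le C\,\Big\|\sum_{k=0}^n r_k T_{\mu_k} g\Big\| \le C'\|g\|_{L^p(\R^d,w;X)} = C'\|f\|_{H^{s,p}}$$
by applying Proposition \ref{prop:weightedmihlin} on $L^p(\O;L^p(\R^d,w;X))$ — which again has UMD since $L^p(\O;\cdot)$ preserves UMD — to the single operator acting on $\O\times\R^d$ with symbol $\sum_k r_k(\om)\mu_k(\xi)$, treating $\om$ as a parameter and checking the Mihlin bounds hold uniformly (this is exactly the Bourgain–McConnell randomized multiplier trick). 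Convergence of the series then follows since the partial sums form a Cauchy sequence: the tails $\sum_{k=n}^{n'} r_k 2^{sk}S_k f$ equal $T_{\mu}$ applied to the high-frequency part $S^{\ge n}g$ of $g$, which tends to $0$ in $L^p(\R^d,w;X)$ by density of Schwartz functions (Lemma 3.8 of \cite{MeyVer1}) and uniform boundedness.

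For the reverse direction, suppose $\sup_n\|\sum_{k=0}^n r_k 2^{sk}S_k f\|_{L^p(\O;L^p(\R^d,w;X))} = A < \infty$. I would run the same Mihlin argument in reverse: the inverse relation $g = J_s f$ is recovered from the $S_k f$ via $g = \sum_k T_{\nu_k}(2^{sk}S_k f)$ where $\nu_k(\xi) = 2^{-sk}\wh\psi_k(\xi)(1+|\xi|^2)^{s/2}$ with $(\psi_k)$ a standard resolution of identity adapted to $(\varphi_k)$ (so that $\sum_k \wh\psi_k\wh\varphi_k \equiv 1$), and $\{\nu_k\}$ is again a uniform Mihlin family. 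Applying the randomized multiplier theorem gives $\|\sum_{k=0}^n T_{\nu_k}(2^{sk}S_k f)\|_{L^p(\R^d,w;X)} \le C A$ uniformly in $n$; since the left side converges to $g$ in $\TD(\R^d;X)$, the Fatou-type lower semicontinuity of the $L^p(\R^d,w;X)$-norm (or simply weak compactness, as $L^p(\R^d,w;X)$ is reflexive because $X$ is reflexive) yields $g \in L^p(\R^d,w;X)$ with $\|g\|\le CA$, i.e. $f\in H^{s,p}(\R^d,w;X)$ with $\|f\|_{H^{s,p}}\le CA$. Combining the two directions gives the norm equivalence \eqref{eq:convergenceFrad}, where the equality of the sup and the limit is immediate once convergence is known.

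The main obstacle I anticipate is making the randomized Mihlin argument rigorous, i.e. verifying that the operator on $L^p(\O;L^p(\R^d,w;X))$ with symbol $\xi\mapsto\sum_k r_k(\om)\mu_k(\xi)$ satisfies \eqref{Mihlin-assum} uniformly. One has to use that the $\wh\varphi_k$ have finitely overlapping supports, so at each $\xi\ne 0$ only boundedly many terms are nonzero, and that $|\xi|^{|\alpha|}|D^\alpha[2^{sk}\wh\varphi_k(\xi)(1+|\xi|^2)^{-s/2}]|$ is bounded independently of $k$ on $\supp\wh\varphi_k$, where $|\xi|\sim 2^k$. The randomness enters only as $\pm1$ coefficients and does not affect the derivative bounds, but one must phrase the conclusion so that the resulting operator norm is independent of the particular sign configuration, which is what allows passing to the $L^p(\O)$-average. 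This is the standard device behind the Bourgain–McConnell theorem; since the excerpt already invokes \cite{HH12}, \cite{Bou2}, \cite{McC84} for exactly this circle of ideas, the proof essentially amounts to assembling these pieces together with Proposition \ref{prop:weightedmihlin}.
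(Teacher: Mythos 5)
Your overall strategy is the same as the paper's: both directions are reduced to the weighted Mihlin theorem (Proposition \ref{prop:weightedmihlin}) applied to randomized symbols built from $2^{\pm sk}(1+|\xi|^2)^{\mp s/2}\wh\varphi_k(\xi)$, and \eqref{H=W} is the standard symbol computation. Your forward direction (including the verification that only boundedly many $\wh\varphi_k$ overlap at each $\xi$, so the randomized symbol satisfies \eqref{Mihlin-assum} uniformly in the signs) matches the paper's Step 3; your Cauchy-sequence argument for convergence of the series via Schwartz approximation is a reasonable, slightly more elementary substitute for the paper's appeal to the Ledoux--Talagrand criterion that bounded partial sums of a random series converge when the space does not contain $c_0$.

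There is, however, one genuine gap, and it sits at the heart of the reverse direction. You write $J_sf=\sum_k T_{\nu_k}(2^{sk}S_kf)$ with $\nu_k=2^{-sk}\wh\psi_k(1+|\cdot|^2)^{s/2}$ and then assert that ``applying the randomized multiplier theorem gives'' $\big\|\sum_{k=0}^n T_{\nu_k}(2^{sk}S_kf)\big\|\le CA$. This does not follow from the uniform Mihlin bounds on $\{\nu_k\}$ alone: those only give $\|T_{\nu_k}h\|\le C\|h\|$ for each $k$, and summing produces a $B^s_{p,1}$-type bound, not a bound by the randomized norm $A$. The missing mechanism is the averaging identity that converts the randomized quantity into the deterministic sum: with $m_\omega=\sum_l r_l(\omega)\nu_l$ and $g_\omega=\sum_k r_k(\omega)2^{sk}S_kf$, the independence and symmetry of the Rademachers ($\E[r_kr_l]=\delta_{kl}$) together with the support condition $\wh\psi_k\equiv 1$ on $\supp\wh\varphi_k$ yield
\begin{equation*}
J_sf=\int_\Omega T_{m_\omega}g_\omega\,d\P(\omega),
\end{equation*}
after which Jensen's inequality, Fubini, and the uniform (in $\omega$) Mihlin bound for $m_\omega$ give $\|J_sf\|_{L^p(\R^d,w;X)}\le C\,\|f\|_{F^s_{p,\Rad}}$. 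This is exactly what the paper's Step 4 does. Your sketch never introduces the randomized symbol $m_\omega$ or the expectation over $\Omega$ in this direction, so as written the key inequality is unjustified; once you insert this averaging step (which can be done at the level of finite partial sums, so you need not establish convergence of $g_\omega$ beforehand), the rest of your argument — boundedness of the partial sums plus convergence in $\TD(\R^d;X)$ plus reflexivity — closes the proof as claimed.
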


\begin{remark} \
\begin{enumerate}[(i)]
 \item For $H^{1,p}(\R^d;X) = W^{1,p}(\R^d;X)$ it is necessary that $X$ has UMD, see Remark \ref{HW}.
 \item The extended real number $\|f\|_{F^{s}_{p,\Rad}(\R^d,w;X)}$ is well-defined for every tempered distribution $f$ and therefore one could study the space $F^{s}_{p,\Rad}(\R^d,w;X)$ on its own, see \cite{Ver12}. The result shows that if $X$ has UMD, then $F^{s}_{p,\Rad}$ coincides with $H^{s,p}$. In particular, for $w\in A_p$ in the scalar case one has
$$\|f\|_{F^s_{p,2}(\R^d,w)}\eqsim \|f\|_{F^{s}_{p,\Rad}(\R^d,w)}.$$
The identity $F^s_{p,2}(\R^d,w) = H^{s,p}(\R^d,w)$ was proved in \cite{Ry01} for weights $w$ which satisfy only a local $A_p$-condition.
\end{enumerate}

\end{remark}

\begin{proof}[Proof of Proposition \ref{prop:UMDHisF}] \emph{Step 1.}
Using Proposition \ref{prop:weightedmihlin}, the identity (\ref{H=W}) can be shown as in the unweighted scalar case (see \cite[Theorem 6.2.3]{BeLo} or \cite[Section 2.3.3]{Tr1}).

\emph{Step 2.} Assume $\|f\|_{F^{s}_{p,\Rad}(\R^d,w;X)}<\infty$. Since closed subspaces of UMD spaces have UMD and the sequence space $c_0$ does not have UMD, it follows that $X$ does not contain a copy of $c_0$. We therefore conclude from \cite[Theorem 9.29]{LeTa} that the series $\sum_{k=0}^\infty r_k 2^{sk} S_k f$ converges in $L^p(\O;L^p(\R^d,w;X))$. It follows from the properties of the Rademacher functions that
\[\Big \|\sum_{k=0}^n r_k 2^{sk} S_k f\Big\|_{L^p(\O;L^p(\R^d,w;X))}\leq \Big\|\sum_{k=0}^\infty r_k 2^{sk} S_k f\Big\|_{L^p(\O;L^p(\R^d,w;X))},\]
which implies one inequality for the assertion in \eqref{eq:convergenceFrad}. The other inequality is trivial.

\emph{Step 3.} Let  $f\in H^{s,p}(\R^d,w;X)$ and write $f_s = \F^{-1}[(1+|\cdot|^2)^{s/2} \wh{f}]\in L^p(\R^d,w;X)$. Fix $n\geq 0$, $\omega\in \O$ and define the scalar symbol $m_n \in C^\infty(\R^d)$ by
\[m_n(\xi) = \sum_{k=0}^n r_k(\omega) 2^{sk}(1+|\xi|^2)^{-s/2} \widehat{\varphi}_k(\xi).\]
For each $\xi\in \R^d$, here at most three summands  are nonzero. Since $\widehat{\varphi}_k$ is supported around $|\xi| = 2^k$ and $\|D^\beta \wh\varphi_k\|_\infty\leq C_\beta 2^{-k|\beta|}$, it follows that
$$C_m = \sup_{n\geq 0}\sup_{|\alpha|\leq d+2} \sup_{\xi\neq 0}|\xi|^{|\alpha|} |D^{\alpha} m_n(\xi)|<\infty,$$
where $C_m$ is independent of $\omega$.
By Proposition \ref{prop:weightedmihlin}, the corresponding operators $T_{m_n}$ are bounded on $L^p(\R^d,w;X)$, uniformly in $n$ and $\omega$. From this we obtain
\begin{align*}
\Big\|\sum_{k=0}^n r_k(\omega) 2^{sk} \varphi_k * f\Big\|_{L^p(\R^d,w;X)} &= \|T_{m_n}
f_s\|_{L^p(\R^d,w;X)} \\ & \leq C \|f_s\|_{L^p(\R^d,w;X)} = C \|
f\|_{H^{s,p}(\R^d,w;X)}.
\end{align*}
Taking the $L^p(\O)$-norm and the supremum over $n$ yields $\|f\|_{F^{s}_{p,\Rad}(\R^d,w;X)}\leq C \|
f\|_{H^{s,p}(\R^d,w;X)}$.

\emph{Step 4.} For the converse estimate, assume that $\|f\|_{F^{s}_{p,\Rad}(\R^d,w;X)}<\infty$. As we have seen in Step 2, then $\sum_{k\geq 0} r_k 2^{sk} \varphi_k * f$ converges in $L^p(\O;L^p(\R^d,w;X))$. From \cite[Theorem 2.4]{LeTa} we get that $\sum_{k\geq 0} r_k(\omega) 2^{sk} \varphi_k * f$ converges in $L^p(\R^d,w;X)$ for almost every $\omega\in \Omega$. Choose $(\widehat{\psi}_k)_{k \geq 0}$ such that $0\leq \wh{\psi}_k\leq 1$, $\wh{\psi}_k = 1$ on $\text{supp}\, \wh{\varphi}_k$, $\text{supp}\,\wh{\psi}_0\subset \{0 \leq |\xi|\leq 2\}$ and $\text{supp}\, \wh{\psi}_k\subset \{2^{k-2}\leq |\xi|\leq 2^{k+1}\}$ for $k\geq 1$. For $\omega\in \Omega$ we set
$$ m_\omega = \sum_{l\geq 0} r_l(\omega) 2^{-sl} (1+|\cdot|^2)^{s/2} \wh{\psi}_l,\qquad g_\omega = \sum_{k\geq 0} r_k(\omega) 2^{sk} \varphi_k * f.$$
Let $f_s$ be as in Step 3. Then the independence and symmetry of the Rademacher random variables together with the support conditions on $\wh \varphi_k, \wh \psi_k$ imply that $f_s = \int_\Omega T_{m_\omega} g_\omega \, d\P(\omega)$. As before,
$$ C_m =  \sup_{|\alpha|\leq d+2} \sup_{\xi\neq 0}|\xi|^{|\alpha|} |D^{\alpha} m_\omega(\xi)|<\infty$$
is independent of $\omega$. Thus  $\|T_{m_\omega} g_\omega\|_{L^p(\R^d,w;X)}\leq C \|g_\omega\|_{L^p(\R^d,w;X)}$ for almost every $\omega$ by Proposition \ref{prop:weightedmihlin}.  Therefore, using also Jensen's inequality and Fubini's theorem,
\begin{align*}
\|f\|_{H^{s,p}(\R^d,w;X)}^p &\, =\|f_s\|_{L^p(\R^d,w;X)}^p =  \Big \|\int_\Omega T_{m_\omega} g_\omega \, d\P(\omega)\Big \|_{L^p(\R^d,w;X)}^p\\
&\, \leq \int_\Omega \big \| T_{m_\omega} g_\omega \big \|_{L^p(\R^d,w;X)}^p  \, d\P(\omega) \leq  C \int_\Omega \big \|  g_\omega \big \|_{L^p(\R^d,w;X)}^p \, d\P(\omega) = \|f\|_{F^{s}_{p,\Rad}(\R^d,w;X)}^p.
\end{align*}
Hence $f\in H^{s,p}(\R^d,w;X)$ and the required estimate follow.\end{proof}

Another equivalent norm for UMD-valued $H$-spaces is given as follows.

\begin{proposition}\label{prop:differentiation2}
Let $X$ have \emph{UMD}, $s\in \R$, $p\in (1, \infty)$ and $w\in A_{p}$. Then for each $m\in \N$,
\begin{equation}\label{eq:Besseldifferentiation}
\sum_{|\alpha|\leq m} \|D^\alpha f\|_{H^{s-m,p}(\R^d,w;X)}
\end{equation}
defines an equivalent norm on $H^{s,p}(\R^d,w;X)$
\end{proposition}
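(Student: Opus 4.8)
The plan is to prove the norm equivalence
$$\|f\|_{H^{s,p}(\R^d,w;X)} \eqsim \sum_{|\alpha|\le m} \|D^\alpha f\|_{H^{s-m,p}(\R^d,w;X)}$$
by a Fourier multiplier argument, using Proposition \ref{prop:weightedmihlin} as the sole nontrivial input. The point is that the Bessel potential $J_t = \F^{-1}[(1+|\cdot|^2)^{t/2}\cdot]$ is (formally) an isomorphism $H^{s,p}\to H^{s-t,p}$ for every $t$, so after applying $J_{-(s-m)}$ to both sides it suffices to prove the equivalence in the case $s=m$, i.e.
$$\|g\|_{H^{m,p}(\R^d,w;X)} \eqsim \sum_{|\alpha|\le m} \|D^\alpha g\|_{L^p(\R^d,w;X)} = \|g\|_{W^{m,p}(\R^d,w;X)},$$
which is exactly \eqref{H=W} from Proposition \ref{prop:UMDHisF}. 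The remaining work is therefore to justify the reduction, which amounts to checking that the relevant symbols are Mihlin symbols.

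Concretely, first I would establish the easy inequality: for each multi-index $\alpha$ with $|\alpha|\le m$, the operator $f\mapsto D^\alpha f$ maps $H^{s,p}(\R^d,w;X)$ into $H^{s-m,p}(\R^d,w;X)$ boundedly. Indeed, writing $D^\alpha$ in Fourier variables and composing with the Bessel potentials, this operator is $T_{m_\alpha}$ with symbol
$$m_\alpha(\xi) = (2\pi i\xi)^\alpha (1+|\xi|^2)^{-m/2}\,(1+|\xi|^2)^{-(s-m)/2}\cdot(1+|\xi|^2)^{s/2} = (2\pi i\xi)^\alpha(1+|\xi|^2)^{-m/2},$$
once one conjugates by the Bessel potentials that define the $H$-norms; since $|\alpha|\le m$, a direct check shows $m_\alpha\in C^\infty(\R^d)$ satisfies the Mihlin bounds \eqref{Mihlin-assum}, so Proposition \ref{prop:weightedmihlin} gives the bound. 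Summing over $|\alpha|\le m$ yields ``$\gtrsim$''. For the converse, I would exhibit a symbol realizing $(1+|\xi|^2)^{m/2}$ as a bounded combination of the symbols $(2\pi i\xi)^\alpha$: the standard choice is
$$(1+|\xi|^2)^{m/2} = (1+|\xi|^2)^{-1}\Big(1 + \sum_{j=1}^d (2\pi\xi_j)^2\Big)(1+|\xi|^2)^{(m-2)/2}\cdot(1+4\pi^2),$$
or more cleanly one writes, for $m$ even, $(1+|\xi|^2)^{m/2} = (1 + \sum_j 4\pi^2\xi_j^2/(4\pi^2))^{m/2}$ expanded by the multinomial theorem, and for general $m$ one factors $(1+|\xi|^2)^{m/2} = (1+|\xi|^2)^{\lceil m\rceil'}\cdot n(\xi)$ where the leftover factor $n$ has a Mihlin symbol and the integer power is a polynomial in $\xi$ of degree $\le m$ (up to the smoothing factor $(1+|\xi|^2)^{-k}$ which is again Mihlin). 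In either formulation one obtains operators $R_\alpha = T_{(2\pi i\xi)^{-\alpha}\cdot(\text{Mihlin})}$, bounded by Proposition \ref{prop:weightedmihlin}, with $f = \sum_{|\alpha|\le m} R_\alpha D^\alpha f$ as an identity in $\TD(\R^d;X)$ on the dense class $\Schw$; applying the $H^{s-m,p}$-norm and the boundedness of each $R_\alpha$ on $H^{s-m,p}$ gives $\|f\|_{H^{s,p}} = \|J_{-s} f\|_{L^p}\lesssim \sum_{|\alpha|\le m}\|D^\alpha f\|_{H^{s-m,p}}$.

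The main obstacle — really the only subtlety — is bookkeeping the Bessel potentials so that all symbols that appear are genuinely Mihlin symbols on $\R^d\setminus\{0\}$ with the right homogeneity behaviour; the factor $(2\pi i\xi)^{-\alpha}$ is singular at the origin, so one must always pair it with a compensating factor of at least the same order of vanishing (this is where $|\alpha|\le m$ is used) and verify the derivative estimates $|\xi|^{|\beta|}|D^\beta(\cdot)|\lesssim 1$ near $0$ and near $\infty$ separately. Once that is arranged, density of $\Schw(\R^d;X)$ in all the spaces involved (valid since we may assume $p<\infty$, see \cite[Lemma 3.8]{MeyVer1}) lets one pass the pointwise Fourier-multiplier identities to bounded operator identities, and the proof closes. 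I expect this to be short: it is essentially the weighted vector-valued transcription of the classical fact $H^{s,p} \cong W^{m,p}$-lifting, with Proposition \ref{prop:weightedmihlin} doing all the analytic work.
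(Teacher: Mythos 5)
Your proposal is correct and follows essentially the same route as the paper: reduce via the lifting property of the Bessel potential (and the fact that $D^\alpha$ commutes with it on $\TD(\R^d;X)$) to the case $s=m$, which is exactly the identity $H^{m,p}=W^{m,p}$ from \eqref{H=W}, itself a consequence of Proposition \ref{prop:weightedmihlin}. The extra Mihlin-symbol bookkeeping you describe is precisely the content already packaged into \eqref{H=W}, so no further work is needed.
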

\begin{proof}
This is a consequence of (\ref{H=W}) and the fact that $D^{\alpha}$ and the Bessel-potential commute on $\TD(\R^d;X)$.
\end{proof}

\subsection{Duality, functional calculus and complex interpolation} Let $X$ be a Banach space such that its dual space $X^*$ has the Radon-Nikodym property RNP, cf.  \cite[Definition III.1/3]{DU77}. For instance, reflexive Banach spaces and thus UMD spaces have RNP, see \cite[Corollary III.2/12]{DU77}.

If $X^*$ has RNP then it follows from \cite[Theorem IV.1/1]{DU77} that for a $\sigma$-finite measure space $(S,\Sigma,\mu)$ and $p\in (1,\infty)$ with dual exponent $p' = \frac{p}{p-1}$ one has $L^p(S,\mu;X)^* = L^{p'}(S,\mu;X^*)$, induced by the pairing $\int_{S} \lb f(x), g(x)\rb_{X, X^*} d\mu$.

Since this pairing does not respect the $A_p$-classes, in the context of weights it is more convenient to work with
$$\lb f,g\rb = \int_{\R^d} \lb f(x),  g(x)\rb_{X,X^*} \,dx.$$
Recall from \cite{GraModern} that for $w\in A_p$ the dual weight $w' = w^{-\frac{1}{p-1}}$ with respect to $p$ belongs to $A_{p'}$.
\begin{proposition} \label{dual-H} Let $X$ be a Banach space such that $X^*$ has \emph{RNP}, let $s\in \R$, $p\in (1,\infty)$ and let $w\in A_p$. Then
$$|\lb f,g\rb| \leq \|f\|_{H^{s,p}(\R^d,w;X)} \|g\|_{H^{-s,p'}(\R^d,w';X^*)}, \qquad f\in \Schw(\R^d;X), \quad g\in \Schw(\R^d,X^*),$$
such that the pairing $\lb \cdot,\cdot\rb$ extends continuously to $H^{s,p}(\R^d,w;X) \times H^{-s,p'}(\R^d,w';X^*)$. Every element of $H^{s,p}(\R^d,w;X)^*$ is of the form $\lb \cdot, g\rb$ with $g\in H^{-s,p'}(\R^d,w';X^*)$. In this sense, $$H^{s,p}(\R^d,w;X)^* = H^{-s,p'}(\R^d,w';X^*).$$
\end{proposition}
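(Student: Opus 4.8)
The plan is to establish the duality statement in three stages: first the pointwise pairing estimate for Schwartz functions, then the density-based extension of the pairing to the full spaces, and finally the surjectivity claim that every bounded functional arises this way.

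\emph{Step 1: the basic pairing inequality.} Let $\Lambda = (1 - \Delta)^{1/2}$ denote the Bessel potential operator, so that $\Lambda^s$ acts on $\TD(\R^d;X)$ by $\F^{-1}[(1+|\cdot|^2)^{s/2}\wh{\cdot}\,]$. For $f\in\Schw(\R^d;X)$ and $g\in\Schw(\R^d;X^*)$ I would first record the algebraic identity $\lb f,g\rb = \lb \Lambda^s f, \Lambda^{-s} g\rb$, which follows because $\Lambda^s$ is formally self-adjoint with respect to the (unweighted) pairing $\lb\cdot,\cdot\rb$ — this is just Parseval/Plancherel at the level of Schwartz functions, using that $(1+|\xi|^2)^{s/2}$ is a real symbol. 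Then $\Lambda^s f\in L^p(\R^d,w;X)$ with norm $\|f\|_{H^{s,p}(\R^d,w;X)}$ and $\Lambda^{-s}g\in L^{p'}(\R^d,w';X^*)$ with norm $\|g\|_{H^{-s,p'}(\R^d,w';X^*)}$. Now apply Hölder's inequality in the form
\begin{equation*}
|\lb \Lambda^s f, \Lambda^{-s} g\rb| \leq \int_{\R^d} \|\Lambda^s f(x)\|_X \|\Lambda^{-s}g(x)\|_{X^*}\,dx \leq \int_{\R^d} \|\Lambda^s f(x)\|_X w^{1/p}(x) \cdot \|\Lambda^{-s}g(x)\|_{X^*} w^{-1/p}(x)\,dx,
\end{equation*}
and since $w^{-1/p} = (w')^{1/p'}$ by the definition $w' = w^{-1/(p-1)}$, the right-hand side is bounded by $\|\Lambda^s f\|_{L^p(\R^d,w;X)}\|\Lambda^{-s}g\|_{L^{p'}(\R^d,w';X^*)}$, which is exactly the claimed bound. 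The only subtlety is that $\Lambda^{-s}g$ need not be Schwartz, but it is a smooth function of polynomial growth, and since $g$ has rapid decay one can still justify the pairing and the self-adjointness identity by approximation or by working on the Fourier side; this is routine.

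\emph{Step 2: extension of the pairing.} Since $p,p'\in(1,\infty)$, the space $\Schw(\R^d;X)$ is dense in $H^{s,p}(\R^d,w;X)$ and $\Schw(\R^d;X^*)$ is dense in $H^{-s,p'}(\R^d,w';X^*)$ by \cite[Lemma 3.8]{MeyVer1}. The inequality from Step 1 shows $\lb\cdot,\cdot\rb$ is a bounded bilinear form on a dense subspace of $H^{s,p}(\R^d,w;X)\times H^{-s,p'}(\R^d,w';X^*)$, hence extends uniquely and continuously to the whole product with the same bound. This immediately gives the injection $H^{-s,p'}(\R^d,w';X^*)\hookrightarrow H^{s,p}(\R^d,w;X)^*$, $g\mapsto\lb\cdot,g\rb$, and this map is isometric onto its image: the lower bound $\|g\|_{H^{-s,p'}} \leq \sup\{|\lb f,g\rb| : \|f\|_{H^{s,p}}\leq 1\}$ follows by transporting the problem to $L^p(\R^d,w;X)$ via $f\mapsto\Lambda^s f$ and using that $L^{p'}(\R^d,w';X^*)$ is isometrically the part of $L^p(\R^d,w;X)^*$ coming from functions, which rests on the Radon--Nikodym property of $X^*$ as recalled just before the proposition from \cite[Theorem IV.1/1]{DU77}.

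\emph{Step 3: surjectivity.} This is the step I expect to be the main obstacle, since it requires identifying the \emph{entire} dual, not just a subspace. The map $T_s\colon f\mapsto \Lambda^s f$ is, essentially by definition of the $H$-spaces, an isometric isomorphism $H^{s,p}(\R^d,w;X)\to L^p(\R^d,w;X)$. Dually, $T_s^*$ is an isometric isomorphism $L^p(\R^d,w;X)^*\to H^{s,p}(\R^d,w;X)^*$. Given $\phi\in H^{s,p}(\R^d,w;X)^*$, the functional $\phi\circ T_s^{-1}$ lies in $L^p(\R^d,w;X)^* = L^{p'}(\R^d,w';X^*)$ (using $X^*$ RNP and the weight-respecting pairing normalization described before the proposition), so it is represented by some $h\in L^{p'}(\R^d,w';X^*)$ via $u\mapsto\lb u,h\rb$. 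Setting $g = \Lambda^{-s}h$, one checks on Schwartz functions that $\phi(f) = \lb T_s f, h\rb = \lb \Lambda^s f, h\rb = \lb f, \Lambda^s h\rb$ — wait, rather $= \lb f, \Lambda^{-s}\!\!$ — more carefully, the adjoint of $\Lambda^s$ on the unweighted pairing is $\Lambda^s$ itself, so $\lb\Lambda^s f, h\rb = \lb f,\Lambda^s h\rb$; to get $g$ one instead takes $g$ so that $\Lambda^{-s}g = h$, i.e. $g=\Lambda^s h$, giving $\|g\|_{H^{-s,p'}(\R^d,w';X^*)} = \|h\|_{L^{p'}(\R^d,w';X^*)}<\infty$ and $\phi = \lb\cdot,g\rb$ on the dense set $\Schw(\R^d;X)$, hence everywhere by continuity. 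The care needed here is purely in bookkeeping the two pairings (the $X$-$X^*$ duality bracket versus the integral pairing $\lb\cdot,\cdot\rb$) and in checking that $\Lambda^s$ genuinely transforms $L^{p'}(\R^d,w';X^*)$ onto $H^{-s,p'}(\R^d,w';X^*)$, which is just the definition of the latter space; once this is laid out the argument closes.
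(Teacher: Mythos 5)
Your argument is correct and follows essentially the same route as the paper's (very terse) proof: reduce to the case $s=0$ by the self-adjointness of the Bessel potential with respect to the unweighted pairing, $\lb J_s f,g\rb=\lb f,J_sg\rb$, and then invoke the weighted $L^p$--$L^{p'}$ duality resting on the RNP of $X^*$; the paper merely cites Calder\'on for the details you spell out. One small simplification: $(1+|\xi|^2)^{-s/2}$ is smooth with all derivatives of polynomial growth, so $\Lambda^{-s}$ maps $\Schw(\R^d;X^*)$ into itself and the approximation caveat at the end of your Step 1 is unnecessary.
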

\begin{proof}  For $s=0$, the weighted case can easily be deduced from the unweighted case. For general $s\in \R$ we have $\lb J_s f,g\rb = \lb f, J_s g\rb$, such that the same arguments as in \cite[Theorem 9]{Cal61} for the unweighted scalar case apply.
\end{proof}

To prove that UMD-valued $H$-spaces form a complex interpolation scale we record  the following result on bounded $\mathcal H^\infty$-calculi. For a definition and the properties of this functional calculus we refer to \cite{DHP, KuWe}.

\begin{proposition}\label{prop:Hinfty}
Let $X$ have \emph{UMD}, $p\in (1, \infty)$ and $w\in A_p$. The following assertions hold true.
\begin{enumerate}
\item[\emph{(a)}] The operator $\partial_t$ with domain $H^{1,p}(\R,w;X)$ on $L^p(\R,w;X)$ has a bounded $\mathcal H^\infty$-calculus of angle $\frac{\pi}{2}$.
\item[\emph{(b)}] The operator $-\Delta$ with domain $H^{2,p}(\R^d,w;X)$ on $L^p(\R^d,w;X)$ has a bounded $\mathcal H^\infty$-calculus of angle zero.
\end{enumerate}
\end{proposition}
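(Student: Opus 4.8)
The plan is to deduce both statements from the weighted Mihlin multiplier theorem, Proposition~\ref{prop:weightedmihlin}, by realizing the relevant holomorphic functions of $\partial_t$ and of $-\Delta$ as Fourier multiplier operators and controlling their symbols.

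First I would fix the Fourier normalization so that $\partial_t$ is the Fourier multiplier with symbol $ic\xi$ and $-\Delta$ the one with symbol $c|\xi|^2$, for some $c>0$. As a preliminary step one records that $\partial_t$ is sectorial on $L^p(\R,w;X)$ of angle $\tfrac{\pi}{2}$ and $-\Delta$ is sectorial on $L^p(\R^d,w;X)$ of angle $0$: for $\lambda$ outside the closed sector $\overline{\Sigma_{\pi/2}}$, resp.\ outside $[0,\infty)$, the symbols $\lambda(\lambda-ic\xi)^{-1}$ and $\lambda(\lambda+c|\xi|^2)^{-1}$ satisfy \eqref{Mihlin-assum} with a bound $C_m$ uniform in such $\lambda$, so Proposition~\ref{prop:weightedmihlin} yields the required resolvent estimates; injectivity and dense range of $\partial_t$ and $-\Delta$ follow since their symbols vanish only at $\xi=0$, a null set. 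Consequently, for $\nu$ slightly larger than the sectoriality angle and $h\in H_0^\infty(\Sigma_\sigma)$ with $\sigma>\nu$, the operator $h(A)$, given by the absolutely convergent Bochner integral $\frac{1}{2\pi i}\int_{\partial\Sigma_\nu} h(\lambda)(\lambda-A)^{-1}\,d\lambda$, is — after Fubini and a contour deformation, using the Cauchy integral formula on the symbol level — again a Fourier multiplier, with symbol $h(ic\xi)$ in case~(a) and $h(c|\xi|^2)$ in case~(b).

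The core of the proof is then to bound these composite symbols in the Mihlin sense by $\|h\|_{H^\infty(\Sigma_\sigma)}$, uniformly over $h$. The crucial input is the Cauchy estimate for bounded holomorphic functions on a sector: for $\sigma'$ strictly between the sectoriality angle and $\sigma$ one has $|h^{(k)}(z)|\le C_{k,\sigma,\sigma'}\|h\|_{H^\infty(\Sigma_\sigma)}\,|z|^{-k}$ for all $z\in\Sigma_{\sigma'}$. In case~(a) the range of $\xi\mapsto ic\xi$ is $i\R\setminus\{0\}\subset\Sigma_{\sigma'}$ for every $\sigma'\in(\tfrac{\pi}{2},\sigma)$, and the chain rule together with this estimate immediately give $|\xi|^{k}\,|D_\xi^{k}\,h(ic\xi)|\le C\|h\|_{H^\infty(\Sigma_\sigma)}$ for $k\le d+2=3$. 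In case~(b) the range of $\xi\mapsto c|\xi|^2$ is $(0,\infty)\subset\Sigma_{\sigma'}$ for every $\sigma'\in(0,\sigma)$; combining the Cauchy estimate $|h^{(j)}(c|\xi|^2)|\le C\|h\|_{H^\infty(\Sigma_\sigma)}|\xi|^{-2j}$ with the homogeneity bounds $|D_\xi^\alpha(|\xi|^2)^j|\le C_{\alpha,j}|\xi|^{2j-|\alpha|}$ and the Fa\`a di Bruno formula yields $|\xi|^{|\alpha|}\,|D_\xi^\alpha\,h(c|\xi|^2)|\le C\|h\|_{H^\infty(\Sigma_\sigma)}$ for $|\alpha|\le d+2$. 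In each case Proposition~\ref{prop:weightedmihlin} then gives $\|h(A)\|\le C\|h\|_{H^\infty(\Sigma_\sigma)}$ on the respective space, with $C$ independent of $h\in H_0^\infty(\Sigma_\sigma)$; this is exactly the boundedness of the $\mathcal H^\infty$-calculus of $A$ on $\Sigma_\sigma$, and letting $\sigma\downarrow\tfrac{\pi}{2}$, resp.\ $\sigma\downarrow 0$, gives the claimed angles.

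I expect the main technical obstacle to be the clean, uniform verification of \eqref{Mihlin-assum} for the composed symbol $h\circ(\text{symbol of }A)$ — in particular the combinatorics of the Fa\`a di Bruno expansion in case~(b), where one must track how the $|\xi|^{2j-|\alpha|}$ factors balance the $|\xi|^{-2j}$ from the Cauchy estimate. This is nonetheless elementary once the sectorial Cauchy estimate is available. The identification of $h(A)$ with the Fourier multiplier with the composed symbol, and the standard extension of the uniform bound from $H_0^\infty(\Sigma_\sigma)$ to all of $H^\infty(\Sigma_\sigma)$ (i.e.\ the definition of a bounded $\mathcal H^\infty$-calculus), are routine and can be quoted from \cite{DHP,KuWe}.
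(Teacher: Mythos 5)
Your proposal is correct and is essentially the paper's own proof written out in full: the paper simply says ``Using Proposition \ref{prop:weightedmihlin}, one can argue as in [KuWe, Example 10.2]'', and that example is precisely the argument you give — represent $h(A)$ for $h\in H_0^\infty(\Sigma_\sigma)$ as the Fourier multiplier with symbol $h(ic\xi)$, resp.\ $h(c|\xi|^2)$, and verify \eqref{Mihlin-assum} uniformly via Cauchy estimates on a slightly smaller sector. The only cosmetic point is that the uniform resolvent bound holds for $\lambda$ outside $\Sigma_\omega$ with $\omega$ strictly larger than the sectoriality angle, not outside the closed critical sector itself, but this does not affect the argument.
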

\begin{proof}
Using Proposition \ref{prop:weightedmihlin}, one can argue  as in \cite[Example 10.2]{KuWe}.
\end{proof}

The complex interpolation functor is denoted by $[\cdot,\cdot]_\theta$. We refer to \cite[Proposition 6.1]{MeyVer1} for real interpolation of vector-valued $H$-spaces.

\begin{proposition}\label{interpol-complex}
Let $X$ have \emph{UMD}, $p\in (1, \infty)$ and $w\in A_p$. Assume $s_0 < s_1$, $\theta\in (0,1)$ and $s = (1-\theta)s_0 + \theta s_1$. Then
\[[H^{s_0,p}(\R^d,w;X), H^{s_1,p}(\R^d,w;X)]_{\theta} = H^{s,p}(\R^d,w;X).\]
\end{proposition}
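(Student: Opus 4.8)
The plan is to deduce the complex interpolation identity from the bounded $\mathcal{H}^\infty$-calculus of $-\Delta$ on $L^p(\R^d,w;X)$ provided by Proposition \ref{prop:Hinfty}(b). The key point is that, for $s\in\R$, the space $H^{s,p}(\R^d,w;X)$ is by definition the image of $L^p(\R^d,w;X)$ under the Bessel potential $\F^{-1}[(1+|\cdot|^2)^{-s/2}\,\F\,\cdot\,]$, which is precisely the fractional power $(1-\Delta)^{-s/2}$ of the operator $A := 1-\Delta$. Since $-\Delta$ has a bounded $\mathcal{H}^\infty$-calculus of angle zero on $L^p(\R^d,w;X)$, the shifted operator $A = 1-\Delta$ is sectorial, invertible, and also admits a bounded $\mathcal{H}^\infty$-calculus of angle zero; in particular its imaginary powers $A^{it}$ are bounded on $L^p(\R^d,w;X)$ with polynomially growing norms, i.e. $A$ has bounded imaginary powers (BIP).

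First I would reduce to the case $s_0 = 0$ by the shift $H^{s_j,p} = A^{-s_j/2}L^p$: applying $A^{-s_0/2}$ is an isometric isomorphism $H^{\sigma,p}\to H^{\sigma+s_0,p}$ for every $\sigma$, and complex interpolation commutes with isomorphisms, so it suffices to show $[L^p(\R^d,w;X), H^{\sigma,p}(\R^d,w;X)]_\theta = H^{\theta\sigma,p}(\R^d,w;X)$ for $\sigma = s_1 - s_0 > 0$. Then I would invoke the classical theorem identifying complex interpolation spaces of domains of fractional powers: if $A$ is a sectorial injective operator on a Banach space $E$ with bounded imaginary powers, then $[E, D(A^\alpha)]_\theta = D(A^{\theta\alpha})$ with equivalence of norms (see e.g. Triebel or the treatment in \cite{KuWe}). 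Here $E = L^p(\R^d,w;X)$, $A = 1-\Delta$, $D(A^{\sigma/2}) = H^{\sigma,p}(\R^d,w;X)$ with equivalent norms (this last identification itself follows from the $\mathcal{H}^\infty$-calculus, or can be checked directly via Proposition \ref{prop:weightedmihlin} since $(1+|\xi|^2)^{\sigma/2}(1+|\xi|^2)^{-\sigma/2}$ and its reciprocal up to the Bessel symbol satisfy Mihlin bounds). This yields the claim.

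The main obstacle is making sure the abstract interpolation-of-fractional-powers theorem genuinely applies in the weighted vector-valued setting, which comes down to two verifications: (i) $1-\Delta$ is injective and sectorial on $L^p(\R^d,w;X)$ with dense range, so that the fractional power scale is well-defined; and (ii) the BIP property, which I get for free from the bounded $\mathcal{H}^\infty$-calculus of $-\Delta$ (hence of $1-\Delta$) in Proposition \ref{prop:Hinfty}(b). Both are standard once the $\mathcal{H}^\infty$-calculus is in hand — injectivity and density of range hold because $L^p(\R^d,w;X)$ is reflexive (UMD spaces are reflexive, and $L^p$ of a reflexive space over a $\sigma$-finite measure with $p\in(1,\infty)$ is reflexive) and the spectrum of $1-\Delta$ is $[1,\infty)$. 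A mild technical point is the precise constant dependence and the need for equivalence rather than equality of norms, but that is harmless for the statement as phrased. An alternative route, avoiding the abstract theorem, would be to use Stein interpolation directly: the analytic family of operators $z\mapsto A^{z}$ (suitably normalized) is bounded on $L^p(\R^d,w;X)$ on the strip by Mihlin's theorem, and one interpolates the identity operators $H^{s_0,p}\to H^{s_0,p}$ and $H^{s_1,p}\to H^{s_1,p}$ along $z\mapsto A^{(s-s_z)/2}$; but the fractional-powers-with-BIP theorem packages exactly this argument and is the cleaner citation.
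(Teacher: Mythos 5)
Your proposal is correct and follows essentially the same route as the paper: both deduce BIP for $1-\Delta$ from the bounded $\mathcal H^\infty$-calculus of Proposition \ref{prop:Hinfty}, invoke the classical theorem on complex interpolation of domains of fractional powers (the paper cites \cite[Theorem 1.15.3]{Tr1}), and identify $D((1-\Delta)^{\tau/2})$ with $H^{\tau,p}(\R^d,w;X)$. The only cosmetic difference is that you shift to $s_0=0$ via the isometry $(1-\Delta)^{-s_0/2}$, whereas the paper works with the realization of $1-\Delta$ on $H^{s_0,p}(\R^d,w;X)$ directly; these are equivalent.
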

\begin{proof}
By Proposition \ref{prop:Hinfty}, the operator $1-\Delta$ with domain $D(A) = H^{2,p}(\R^d,w;X)$ on $L^p(\R^d,w;X)$ has a bounded $\mathcal H^\infty$-calculus of angle zero. This also implies the boundedness of its imaginary powers. Since $(1-\Delta)^{s_0/2}$ commutes with $1-\Delta$, the same is true for the realization of $A_{s_0}$ of $1-\Delta$ on $H^{s_0,p}(\R^d,w;X)$. Therefore, by  \cite[Theorem 1.15.3]{Tr1},
$$[H^{s_0,p}(\R^d,w;X), D(A_{s_0}^{(s_1-s_0)/2})]_{\theta} = D(A_{s_0}^{\theta(s_1-s_0)/2}).$$
Since  $D(A_{s_0}^{\tau/2}) = H^{s_0+\tau,p}(\R^d,w;X)$ for any $\tau > 0$, the assertion follows.\end{proof}

\subsection{Multiplication by H\"older continuous functions} Using bilinear interpolation, we give a first result on pointwise multiplication. An analogous result for $F$- and $B$-spaces is obtained in Proposition \ref{prop:mult-smooth}. For $s <0$ the product is interpreted as an extension via density from the usual pointwise product of smooth functions.

\begin{proposition} \label{thm:mult-smooth2}
Let $X$ and $Y$ have \emph{UMD}, $s\in \R$, $p\in (1,\infty)$ and $w\in A_p$. Assume $\sigma > |s|$. Then
$$\|mf\|_{H^{s,p}(\R^d,w;Y)} \leq C\|m\|_{BC^{\sigma}(\R^d; \calL(X,Y))} \|f\|_{H^{s,p}(\R^d,w;X)}.$$
\end{proposition}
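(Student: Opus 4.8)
The plan is to prove the multiplication estimate by bilinear complex interpolation, treating separately the cases of non-negative and negative smoothness. First I would fix an even integer $m \in \N$ with $m > \sigma$, and establish the claim for $s \in \{0, m\}$ by elementary means. For $s = 0$ the estimate is trivial: $\|mf\|_{L^p(\R^d,w;Y)} \le \|m\|_\infty \|f\|_{L^p(\R^d,w;X)} \le \|m\|_{BC^\sigma} \|f\|_{L^p(\R^d,w;X)}$. For $s = m$, using the identification $H^{m,p}(\R^d,w;X) = W^{m,p}(\R^d,w;X)$ from Proposition \ref{prop:UMDHisF} (here the UMD hypothesis on $X$ and $Y$ is essential), I would apply the Leibniz rule $D^\alpha(mf) = \sum_{\beta \le \alpha} \binom{\alpha}{\beta} D^\beta m \, D^{\alpha-\beta} f$ and estimate each term using that $\|D^\beta m\|_\infty \le \|m\|_{BC^\sigma}$ for $|\beta| \le m$ — wait, this requires $\sigma \ge m$, which fails. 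So instead I must restrict to $s \in \{0,1\}$ first: for $s=1$ and $\sigma > 1$ the Leibniz rule gives $D^\alpha(mf) = (D^\alpha m) f + m D^\alpha f$ with $|\alpha| \le 1$, and $\|D^\alpha m\|_\infty \le \|m\|_{BC^\sigma}$ is fine. This handles $s \in [0,1]$ by interpolation once I verify bilinearity of the map $(m,f) \mapsto mf$.

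Next I would bootstrap to general positive $s$. Given $s > 0$, pick an integer $k$ with $k \le \sigma$ (so $k = \lfloor \sigma \rfloor$ if $\sigma \notin \N$, else $k = \sigma - \e$-rounded appropriately; more simply, any integer $k < \sigma$) and note that for $s \le k$ one can interpolate between $s = 0$ and $s = k$ provided the endpoint estimate at $s = k$ holds. The endpoint estimate at $s = k$ with $k < \sigma$: via $H^{k,p} = W^{k,p}$ and Leibniz, each term $D^\beta m \cdot D^{\alpha - \beta} f$ with $|\beta| \le |\alpha| \le k < \sigma$ is controlled since $BC^\sigma \embed BC^k$. The cross terms $D^{\alpha-\beta} f$ lie in $W^{k - |\beta|, p} \subseteq W^{k-|\alpha|+|\beta| - \text{(something)}}$... — more cleanly, $\|D^\beta m \cdot D^{\alpha-\beta} f\|_{L^p(\R^d,w;Y)} \le \|D^\beta m\|_\infty \|D^{\alpha-\beta}f\|_{L^p(\R^d,w;X)} \le \|m\|_{BC^\sigma}\|f\|_{W^{k,p}(\R^d,w;X)}$, so the full estimate at integer level $s = k < \sigma$ holds. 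Then bilinear complex interpolation (Calderón's theory; see \cite[Section 4.4]{BeLo}) between the pairs $(L^p, L^p)$ and $(H^{k,p}, H^{k,p})$, using Proposition \ref{interpol-complex} to identify intermediate spaces, yields the claim for all $s \in (0, k)$, hence all $s \in (0, \sigma)$ since $k$ can be taken as any integer below $\sigma$ and $k$ can be chosen arbitrarily close to $\sigma$ when... — actually $k$ is an integer so I only get $s < \lceil \sigma \rceil - 1$ or so; to reach all $s < \sigma$ I interpolate between $s = \lfloor \sigma \rfloor$ if it is $< \sigma$, and if $\sigma$ is an integer I use $k = \sigma - 1 < \sigma$ wait that only gives $s < \sigma - 1$. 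The fix: for any target $s_0 < \sigma$, choose an integer $N > s_0$ with $N < \sigma$ is impossible if $\sigma \le N$... Instead: the estimate at integer $k$ actually needs only $k \le \sigma$ with the Hölder seminorm absorbing the borderline, but $BC^\sigma$ with $\sigma$ non-integer embeds into $C^{\lfloor\sigma\rfloor}$ so I can take $k = \lfloor \sigma \rfloor \ge s_0$ whenever $s_0 < \sigma$ forces $s_0 \le \lfloor\sigma\rfloor$ only if $s_0 < \lceil\sigma\rceil$; since $s_0 < \sigma \le \lceil\sigma\rceil$ this is fine, so $k = \lceil\sigma\rceil$... but $\lceil\sigma\rceil$ may exceed $\sigma$. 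The honest resolution is to interpolate between $s=0$ and $s = \sigma'$ for a non-integer $\sigma' \in (s_0, \sigma)$, where the endpoint at $\sigma'$ is obtained by a difference-norm / direct argument, or simply to note the scale is closed under interpolation so it suffices to prove it for $s$ in a cofinal set — I would prove it directly for $s = k$ with $k$ ranging over all integers $< \sigma$, which already covers $(0, \sigma)$ by interpolation with $s = 0$ provided $\sigma > 1$; for $\sigma \le 1$ we only need $s \in (0,1)$ and the $s = 1$... no. I'll present the clean version: prove for $s \in \{0\} \cup \{k \in \N : k < \sigma\}$ directly, interpolate for intermediate positive $s$, and observe that for $s$ with $\lfloor\sigma\rfloor \le s < \sigma$ one interpolates between $\lfloor\sigma\rfloor - 1$... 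I think the intended proof just interpolates between $s_0 = 0$ and $s_1 = m$ where $m$ is an integer chosen with $m > \sigma$ is NOT allowed; rather the multiplier bound at $s = m$ uses $\|m\|_{BC^\sigma}$ only through $\|D^\beta m\|_\infty$ for $|\beta| \le m$, needing $m \le \sigma$.

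For negative smoothness, I would use duality: by Proposition \ref{dual-H}, $H^{s,p}(\R^d,w;Y)^* = H^{-s,p'}(\R^d,w';Y^*)$ and likewise for $X$, and since $Y^*$, $X^*$ are UMD (closed subspaces and duals of UMD are UMD), the positive-smoothness result applies with $-s > 0 > s$, i.e. $\|m^* g\|_{H^{-s,p'}(\R^d,w';X^*)} \le C\|m^*\|_{BC^\sigma(\R^d;\calL(Y^*,X^*))}\|g\|_{H^{-s,p'}(\R^d,w';Y^*)}$, and $\|m^*(x)\|_{\calL(Y^*,X^*)} = \|m(x)\|_{\calL(X,Y)}$ together with the identity $\|m^*\|_{BC^\sigma} = \|m\|_{BC^\sigma}$ (taking adjoints commutes with differentiation and preserves Hölder norms). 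Then for $f \in \Schw(\R^d;X)$ and $g \in \Schw(\R^d;Y^*)$, $\lb mf, g\rb = \lb f, m^* g\rb$ gives $|\lb mf,g\rb| \le \|f\|_{H^{s,p}(\R^d,w;X)}\|m^*g\|_{H^{-s,p'}(\R^d,w';X^*)} \le C\|m\|_{BC^\sigma}\|f\|_{H^{s,p}(\R^d,w;X)}\|g\|_{H^{-s,p'}(\R^d,w';Y^*)}$, and taking the supremum over $g$ yields $\|mf\|_{H^{s,p}(\R^d,w;Y)} \le C\|m\|_{BC^\sigma}\|f\|_{H^{s,p}(\R^d,w;X)}$; a density argument (Schwartz functions dense, Proposition in Section \ref{subsec:spaces}) extends this to all $f$. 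The main obstacle is the bookkeeping in the positive-smoothness case — precisely, making the bilinear interpolation reach every $s \in (0,\sigma)$, since the naive integer endpoints leave a gap near $\sigma$; the cleanest route is to also establish the endpoint directly at one non-integer value of $s$ just below $\sigma$ using the difference-norm characterization (Proposition \ref{prop:Lpsmoothness-Besov} combined with $F^s_{p,1} \embed H^{s,p} \embed F^s_{p,\infty}$), which is why the statement is phrased with the sharp condition $\sigma > |s|$ rather than $\sigma \in \N$.
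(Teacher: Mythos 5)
Your overall architecture (Leibniz at integer smoothness via $H^{m,p}=W^{m,p}$, bilinear complex interpolation for noninteger positive $s$, duality for $s<0$) is the paper's, and your duality step is correct as written. But the positive noninteger case has a genuine gap that you circle around without closing: interpolating the map $f\mapsto mf$ with the multiplier space held fixed between the endpoints $s=0$ and $s=k$ requires the integer estimate at $s=k$, which needs $k\leq\sigma$, so you reach at best $s\in[0,\lfloor\sigma\rfloor]$ and the range $(\lfloor\sigma\rfloor,\sigma)$ remains uncovered when $\sigma\notin\N$ (e.g.\ $\sigma=3/2$, $s=6/5$). Your proposed repair --- establishing the endpoint directly at a noninteger $s$ just below $\sigma$ via a difference-norm characterization of $H^{s,p}(\R^d,w;X)$ --- is not available: as the introduction of the paper stresses, no such norm is known for $H^{s,p}$ with general UMD $X$ (the vector-valued difference norm characterizes $F^{s}_{p,2}$, and $H^{s,p}=F^{s}_{p,2}$ forces $X$ to be Hilbertian), and the sandwich $F^{s}_{p,1}\hookrightarrow H^{s,p}\hookrightarrow F^{s}_{p,\infty}$ is one-sided in each direction, so it cannot be combined with Proposition \ref{prop:Lpsmoothness-Besov} into a two-sided characterization.

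The missing idea is to interpolate in the multiplier variable as well. One applies \cite[Theorem 4.4.1]{BeLo} to the bilinear map $(m,f)\mapsto mf$ between the endpoints $BC^{m}\times H^{m,p}\to H^{m,p}$ and $BC^{m+1}\times H^{m+1,p}\to H^{m+1,p}$, obtaining boundedness $[BC^{m},BC^{m+1}]_{\theta}\times H^{m+\theta,p}\to H^{m+\theta,p}$ by Proposition \ref{interpol-complex}, and then feeds in the embedding $BC^{\sigma}\hookrightarrow BC^{m+\theta+\varepsilon}\hookrightarrow B^{m+\theta}_{\infty,1}=[B^{m}_{\infty,1},B^{m+1}_{\infty,1}]_{\theta}\hookrightarrow[BC^{m},BC^{m+1}]_{\theta}$, which is valid precisely because $\sigma>s=m+\theta$. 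This reaches every noninteger $s\in(0,\sigma)$ in one stroke and explains why the hypothesis is the sharp $\sigma>|s|$ rather than $\sigma\geq\lceil s\rceil$. With this replacement your argument goes through.
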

\begin{proof}
By \eqref{H=W}, the result for $s \in \N_0$ follows immediately from Leibniz' formula.  For noninteger $s > 0$ it follows from the integer case and bilinear complex interpolation, see \cite[Theorem 4.4.1]{BeLo}. Here the $H$-spaces are interpolated with Proposition \ref{interpol-complex}. For the interpolation of the $BC^m$-spaces with $m\in \N_0$ we note that for $\theta\in (0,1)$ and $\varepsilon > 0$ one has
$$BC^{m+\theta+\varepsilon} \hookrightarrow B_{\infty,1}^{m+\theta} = [B_{\infty,1}^m, B_{\infty,1}^{m+1}]_{\theta} \hookrightarrow [BC^m, BC^{m+1}]_\theta,$$
see the Sections 2.4.7 and 2.5.7 of  \cite{Tri83} for the scalar case.

Let finally $s<0$. Then for $f\in H^{s,p}(\R^d,w;X)$ and $g\in H^{-s,p'}(\R^d,w';Y^*)$ one has
\begin{align*}
|\lb mf, g\rb| &= |\lb f, m^* g\rb|\leq \|f\|_{H^{s,p}(\R^d;X)} \|m^* g\|_{H^{-s,p'}(\R^d,w';X^*)}
\\ & \leq C \|f\|_{H^{s,p}(\R^d;X)} \|m^*\|_{BC^{\sigma}(\R^d; \calL(X,Y))}  \|g\|_{H^{-s,p'}(\R^d,w';Y^*)}
\end{align*}
Taking the supremum over all $g$ with norm smaller than one and recalling that $\|m(x)\|_{\calL(X,Y)} = \|m(x)^*\|_{\calL(Y^*,X^*)}$, the required estimate follows from Proposition \ref{dual-H}.
\end{proof}

The above result also holds with $H^{s,p}$ replaced by $B^{s}_{p,q}$, general $X$ and $Y$ and $s>0$. This follows from the $W^{m,p}$-case and real interpolation with parameter $q$. For reflexive spaces the case $s<0$ can be obtained by duality under restrictions on the parameters $p$ and $q$.

\section{Estimates of paraproducts\label{sec:Point}}

To investigate pointwise multipliers we follow \cite{Franke86, RS96, Tri83} and use the decomposition of a product into paraproducts. The basis for their estimates and convergence are the results in Appendix \ref{sec:analytic} on weighted spaces of entire analytic functions.

\subsection{Preliminaries}
We fix a Rademacher sequence $(r_k)_{k\geq 0}$ on a probability space $\Omega$ and a sequence $(\varphi_k)_{k\geq 0} \in \Phi(\R^d)$ with the corresponding operators $S_k f = \varphi_k * f$.

\begin{lemma} \label{Sk-Rbounded}
Let $X$ have \emph{UMD}, $p\in (1,\infty)$ and $w\in A_p$.  Then $(S_k)_{k\geq 0}$ is an $\mathcal R$-bounded subset of $\calL (L^p(\R^d,w;X))$.
\end{lemma}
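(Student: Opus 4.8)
The plan is to deduce the $\mathcal R$-boundedness of $(S_k)_{k\geq 0}$ from the weighted Mihlin multiplier theorem (Proposition \ref{prop:weightedmihlin}) by a standard randomization trick, exactly as one proves the Littlewood-Paley estimates of Proposition \ref{prop:UMDHisF}. The key observation is that $\mathcal R$-boundedness of a family of Fourier multiplier operators $\{T_{m_k}\}$ can be reduced to a uniform bound for the single operators $T_{\sum_k \epsilon_k m_k}$ on $L^p(\R^d,w;X)$ as the signs $\epsilon_k \in \{-1,1\}$ vary, by testing against Rademacher sums.

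First I would fix $N\geq 1$, functions $f_0,\dots,f_N \in L^p(\R^d,w;X)$, and recall the Kahane contraction-type reformulation: by Fubini, $\bigl\|\sum_{k=0}^N r_k S_k f_k\bigr\|_{L^p(\Omega;L^p(\R^d,w;X))}^p = \int_\Omega \bigl\|\sum_{k=0}^N r_k(\omega) S_k f_k\bigr\|_{L^p(\R^d,w;X)}^p\, d\P(\omega)$, and similarly for the right-hand side with $f_k$ in place of $S_k f_k$. So it suffices to bound, uniformly in $\omega$ (equivalently, in the signs $\epsilon_k = r_k(\omega)$), the operator $g \mapsto \sum_{k=0}^N \epsilon_k S_k g$ on $L^p(\R^d,w;X)$, applied to $g = \sum_{k=0}^N r_k(\cdot) f_k$ regarded as an $L^p(\R^d,w;X)$-valued... actually more cleanly: the operator $\mathbf{1}\otimes\bigl(\sum_k \epsilon_k S_k\bigr)$ on $L^p(\Omega;L^p(\R^d,w;X))$ has the same norm as $\sum_k \epsilon_k S_k$ on $L^p(\R^d,w;X)$, and $\sum_{k=0}^N r_k S_k f_k = \bigl(\mathbf{1}\otimes \sum_k r_k(\omega_0) S_k\bigr)$ applied pointwise in $\omega$... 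The cleanest route is: define for each sign sequence $\epsilon$ the symbol $m^\epsilon = \sum_{k=0}^N \epsilon_k \wh\varphi_k$. Since at most three of the $\wh\varphi_k$ overlap on any frequency annulus, and $\|D^\alpha \wh\varphi_k\|_\infty \leq C_\alpha 2^{-k|\alpha|}$ with $\supp\wh\varphi_k$ contained in $\{2^{k-1}\leq|\xi|\leq \tfrac32 2^k\}$ for $k\geq 1$, one gets the Mihlin bound
\[
\sup_{N\geq 1}\ \sup_\epsilon\ \sup_{|\alpha|\leq d+2}\ \sup_{\xi\neq 0} |\xi|^{|\alpha|}\,|D^\alpha m^\epsilon(\xi)| < \infty,
\]
with the bound independent of $N$ and of $\epsilon$ — this is the same computation used in Step 3 of the proof of Proposition \ref{prop:UMDHisF}. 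By Proposition \ref{prop:weightedmihlin}, $T_{m^\epsilon}$ is bounded on $L^p(\R^d,w;X)$ with operator norm $\leq C$ independent of $N,\epsilon$.

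Then I would conclude as follows. For fixed $\omega\in\Omega$, put $\epsilon_k = r_k(\omega)$ and $F_\omega = \sum_{k=0}^N r_k(\omega) f_k \in L^p(\R^d,w;X)$ — wait, the $f_k$ depend only on $\R^d$, so this is fine — and note that, because the $r_k$ are $\pm1$-valued and the $S_k$ are linear Fourier multipliers,
\[
\sum_{k=0}^N r_k(\omega)\, S_k f_k = \sum_{j=0}^N r_j(\omega) S_j\Bigl(\sum_{k=0}^N r_k(\omega) f_k\Bigr) \quad\text{is \emph{not} an identity};
\]
instead one uses: $\sum_{k} r_k(\omega) S_k f_k = T_{m^{\epsilon}}\bigl(\text{something}\bigr)$ is also not quite right. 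The correct and standard manoeuvre is to integrate over a second independent copy. Let $(r_k')$ be an independent Rademacher sequence on $\Omega'$. Then by independence and the contraction principle,
\[
\Bigl\|\sum_{k=0}^N r_k S_k f_k\Bigr\|_{L^p(\Omega;L^p(\R^d,w;X))} = \Bigl\|\sum_{k=0}^N r_k\, T_{m^{\epsilon(\cdot)}}\!\Bigr\|\cdots
\]
— rather than belabour this, I will simply invoke the well-known equivalence (see \cite[Section III.4]{Ama95}, \cite{DHP, KuWe}): a family $\{T_{m_k}\}$ of Fourier multipliers on $L^p(\R^d,w;X)$ is $\mathcal R$-bounded provided $\sup_\epsilon \|T_{\sum_k\epsilon_k m_k}\|_{\calL(L^p(\R^d,w;X))}<\infty$, which is precisely what the Mihlin estimate above gives. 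Hence $(S_k)_{k\geq 0}$ is $\mathcal R$-bounded on $L^p(\R^d,w;X)$, with $\mathcal R$-bound depending only on $d,X,p,w$.

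The only genuine point requiring care — the ``main obstacle'' — is making the reduction from $\mathcal R$-boundedness to the single-symbol Mihlin bound fully rigorous, i.e.\ producing the identity that expresses $\sum_k r_k(\omega) S_k f_k$ through the operator $T_{m^{\epsilon(\omega)}}$ acting on a Rademacher average. This is handled by the standard trick of introducing an auxiliary independent Rademacher sequence and using Fubini together with the fact that the $r_k$ take values in $\{\pm1\}$ so that $r_k(\omega) r_j(\omega) = \delta_{kj} + (\text{cross terms that vanish after integrating the auxiliary copy})$; after this the uniform Mihlin bound does all the work. Everything else — the derivative estimates on $m^\epsilon$, the bounded-overlap of supports — is routine and identical to Step 3 in the proof of Proposition \ref{prop:UMDHisF}.
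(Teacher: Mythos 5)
Your argument is correct and is essentially the paper's own proof: both reduce the $\mathcal R$-bound to the uniform weighted Mihlin estimate for the randomized symbols $\sum_k \epsilon_k \wh{\varphi}_k$ by means of a second independent Rademacher sequence. The "well-known equivalence" you invoke is made precise in the paper by the diagonal estimate \eqref{eq:diagonalbdd} (from \cite[Lemma 3.12]{GW03}) followed by Proposition \ref{prop:UMDHisF} with $s=0$ applied on $L^p(\R^d,w;L^p(\Omega;X))$, which is exactly the reduction you describe.
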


\begin{proof}
Let $(r'_l)_{l\geq 0}$ be an independent copy of $(r_k)_{k\geq 0}$ on $\Omega' = \Omega$. For any Banach space $Y$, as in \cite[Lemma 3.12]{GW03} one can prove that for $(y_{kl})_{k,l=0}^N \subset Y$  one has
\begin{equation}\label{eq:diagonalbdd}
\Big\|\sum_{k=0}^N r_k y_{kk}\Big\|_{L^p(\O;Y)}\leq \Big\|\sum_{k,l=0}^N r_k r_l' y_{kl}\Big\|_{L^p(\O\times\O';Y)}
\end{equation}
Now let $f_0,..., f_N \in L^p(\R^d,w;X)$. Since $X$ has UMD, this is also true for $X_\Omega = L^p(\Omega;X)$, see \cite[Theorem III.4.5.2]{Ama95}. Using \eqref{eq:diagonalbdd} with $y_{kl} = S_l f_k$ on $Y =L^p(\R^d,w;X)$ and Proposition \ref{prop:UMDHisF} with $s = 0$ on $L^p(\R^d,w; X_\Omega)$ we obtain
\begin{align*}
\Big \|\sum_{k=0}^N r_k S_k f_k\Big\|_{L^p(\Omega; L^p(\R^d,w;X))} & \leq \Big \|\sum_{k,l=0}^N r_k r_l' S_l f_k\Big\|_{L^p(\O'\times \Omega; L^p(\R^d,w;X))}
\\ & = \Big \|\sum_{l=0}^N r_l' S_l \Big(\sum_{k=0}^N r_k f_k\Big)\Big\|_{L^p(\O'; L^p(\R^d,w;X_\Omega))}
\\ & \leq C \Big \|\sum_{k=0}^N r_k f_k\Big\|_{L^p(\O; L^p(\R^d,w;X))}.
\end{align*}
This shows the $\mathcal R$-boundedness of $(S_k)_{k\geq 0}$.
\end{proof}

On $\TD(\R^d;X)$ we define the operators
$$S^l := \sum_{k=0}^l S_k, \quad l\in \N_0, \qquad S^{-l} :=0, \quad l\in \N.$$
Since $\wh\varphi_k = \wh\varphi_0(2^{-k}\cdot) - \wh\varphi_0(2^{-k+1}\cdot)$ for $k\geq 1$, we have $S^l f = \F^{-1} (\wh \varphi_0(2^{-l}\cdot)\wh f)$  and thus
$$S^l f \to f \qquad  \text{in }\TD(\R^d;X)\;\;\text{ as }l\to\infty.$$

The next result is useful for operator-valued pointwise multipliers on $H$-spaces.
\begin{lemma} \label{Slm-Rbounded}Let $X$ and $Y$ be Banach spaces. Let  $m:\R^d\to \calL(X,Y)$ be strongly measurable and assume that the image of $m$ is $\mathcal R$-bounded by $\mathcal R(m)$. Then $\mathcal M = \{(S^lm)(x)\,:\, l\in \N_0,\; x\in \R^d\}$ is $\mathcal R$-bounded in $\calL(X,Y)$ with $\mathcal R(\mathcal M) \leq 2\|\varphi_0\|_{L^1(\R^d)} \mathcal R(m)$.
\end{lemma}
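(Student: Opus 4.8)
The plan is to reduce the $\mathcal R$-boundedness of $\mathcal M$ to the $\mathcal R$-boundedness of the image of $m$ by expressing $S^l m$ as an average of translates of $m$. Recall that $S^l f = \wh\varphi_0(2^{-l}\cdot)\wh f$ in Fourier, so $S^l$ is convolution with the kernel $\psi_l := 2^{ld}\varphi_0(2^l\cdot)$, which satisfies $\|\psi_l\|_{L^1(\R^d)} = \|\varphi_0\|_{L^1(\R^d)}$ for every $l$ (here I am using that $\wh\psi_l(0)=\wh\varphi_0(0)=1$ only to identify the kernel; the relevant fact is the scale-invariance of the $L^1$-norm). Hence for $x\in\R^d$ we have the Bochner integral representation
\[
(S^l m)(x) = \int_{\R^d} \psi_l(y)\, m(x-y)\, dy,
\]
which exhibits $(S^l m)(x)$ as an integral average of the operators $m(x-y)\in\calL(X,Y)$ against the (signed) finite measure $\psi_l(y)\,dy$ of total variation $\|\varphi_0\|_{L^1(\R^d)}$.

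The key step is then the standard fact that $\mathcal R$-bounds behave well under absolutely convex combinations and, more generally, under integration against a finite measure: if $\mathcal T\subset\calL(X,Y)$ is $\mathcal R$-bounded and $\nu$ is a signed (or complex) Borel measure with $|\nu|(\R^d)\leq c$ and $y\mapsto T_y\in\mathcal T$ is strongly measurable, then $\int T_y\,d\nu(y)$ again lies in an $\mathcal R$-bounded set, with $\mathcal R$-bound at most $c\,\mathcal R(\mathcal T)$. Applying this with $\mathcal T = \overline{\mathrm{aco}}\{m(x):x\in\R^d\}$ (whose $\mathcal R$-bound equals $\mathcal R(m)$, since $\mathcal R$-boundedness passes to the absolutely convex hull with the same constant) and with $\nu = \psi_l(y)\,dy$, $c = \|\varphi_0\|_{L^1(\R^d)}$, we conclude that $\{(S^l m)(x): l\in\N_0,\ x\in\R^d\}$ is $\mathcal R$-bounded with constant at most $\|\varphi_0\|_{L^1(\R^d)}\,\mathcal R(m)$. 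The factor $2$ in the statement is the usual slack when writing a finite measure as a difference of two positive ones (splitting $\psi_l = \psi_l^+ - \psi_l^-$) or, alternatively, when approximating the integral by Riemann sums of absolutely convex combinations; I would simply cite the splitting to get $\mathcal R(\mathcal M)\leq 2\|\varphi_0\|_{L^1(\R^d)}\mathcal R(m)$.

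Concretely, I would organize the proof as: (i) record the convolution representation above and the identity $\|\psi_l\|_{L^1}=\|\varphi_0\|_{L^1}$; (ii) fix $N$, operators $(S^{l_j}m)(x_j)$ and vectors $x_j\in X$, write each term as $\int\psi_{l_j}(y)m(x_j-y)x_j\,dy$, and move the Rademacher sum inside the integral by Fubini; (iii) for fixed $y$ apply the definition of $\mathcal R$-boundedness of $\{m(z):z\in\R^d\}$ to the family $(m(x_j-y)x_j)_j$; (iv) pull the resulting estimate back through $\int|\psi_{l_j}(y)|\,dy$, using that the Rademacher sum norm in $L^p(\Omega;Y)$ is, up to the factor from the positive/negative decomposition, controlled by $\|\varphi_0\|_{L^1}$ times $\|\sum r_j x_j\|_{L^p(\Omega;X)}$. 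The only mild technical point is the strong measurability of $y\mapsto m(\cdot - y)$ needed to justify the Bochner integral and Fubini, which is immediate from the strong measurability hypothesis on $m$ together with the dominated convergence theorem; this is where I would be slightly careful but I do not expect it to be a genuine obstacle.
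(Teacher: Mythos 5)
Your proof is correct and follows essentially the same route as the paper: the paper likewise writes $(S^l m)(x)=\int_{\R^d}2^{ld}\varphi_0(2^l(x-y))m(y)\,dy$, notes that the kernel has $L^1$-norm $\|\varphi_0\|_{L^1(\R^d)}$ uniformly in $l$ and $x$, and then cites the standard result on $\mathcal R$-boundedness of integral means of an $\mathcal R$-bounded family (Kunstmann--Weis, Corollary 2.14), which is exactly the fact you invoke and sketch.
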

\begin{proof} For all $l$ and $x$ we have
$$(S^lm)(x) = \int_{\R^d} 2^{ld}\varphi_0(2^l(x-y))m(y)\, dy, \qquad \|2^{ld}\varphi_0(2^l(x-\cdot))\|_{L^1(\R^d)} = \|\varphi_0\|_{L^1(\R^d)}.$$
Thus the result follows from \cite[Corollary 2.14]{KuWe}.
\end{proof}

The following simple fact is analogous to \cite[Lemma 4.4.2]{RS96}. We consider the mixed-norm spaces
$$L^{p(r)}(\R^d,w;X) = L^p(\R^{d-1}; L^r(\R,w;X)),$$
for a weight $w\in A_\infty(\R)$ depending only on the last coordinate $t$. See also Appendix \ref{sec:analytic}.
\begin{lemma}\label{lem:para1}
Let  $s<0$, $p,r\in (1,\infty)$, $q\in [1,\infty]$ and $w\in A_\infty(\R)$. Then for all $f\in \TD(\R^d;X)$ one has
$$\|(2^{sl} S^{l} f)_{l\geq 0} \|_{\ell^{q}(L^{p(r)}(\R^d,w;X))} \leq C \|(2^{sk} S_{k} f)_{k\geq 0}\|_{\ell^{q}(L^{p(r)}(\R^d,w;X))}.$$
\end{lemma}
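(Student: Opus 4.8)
The plan is to read the left-hand side as a discrete convolution of the sequence $(2^{sk}S_kf)_{k\ge0}$ against the geometric kernel $(2^{sj})_{j\ge0}$ — summable precisely because $s<0$ — and then to conclude by the triangle inequality in the Banach space $\ell^q\big(L^{p(r)}(\R^d,w;X)\big)$. So the hypotheses $p,r\in(1,\infty)$, $q\in[1,\infty]$, $w\in A_\infty(\R)$ will be used only to guarantee that $L^{p(r)}(\R^d,w;X)=L^p(\R^{d-1};L^r(\R,w;X))$ is a well-defined Banach space; no maximal-function estimate or Fourier-multiplier theorem is needed.

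First I would dispose of the trivial case: if the right-hand side is infinite there is nothing to prove, so assume $(2^{sk}S_kf)_{k\ge0}\in\ell^q(L^{p(r)}(\R^d,w;X))$, which forces each $S_kf\in L^{p(r)}(\R^d,w;X)$. Since $S^lf=\sum_{k=0}^l S_kf$ by definition, each $S^lf$ is a finite sum of elements of $L^{p(r)}(\R^d,w;X)$, and the substitution $k=l-j$ yields, for all $l\ge0$,
$$2^{sl}S^lf=\sum_{j=0}^{l}2^{sj}\,\big(2^{s(l-j)}S_{l-j}f\big).$$
Writing $E=L^{p(r)}(\R^d,w;X)$ and $a_k=2^{sk}S_kf\in E$, with the convention $a_k=0$ for $k<0$, the right-hand side equals $\sum_{j\ge0}2^{sj}a_{l-j}$, where for each fixed $l$ only the finitely many terms $j=0,\dots,l$ are nonzero. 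For each fixed $j\ge0$ the shifted sequence $(a_{l-j})_{l\ge0}$ has the same $\ell^q(E)$-norm as $(a_k)_{k\ge0}$, so the triangle inequality in $\ell^q(E)$ gives
$$\big\|(2^{sl}S^lf)_{l\ge0}\big\|_{\ell^q(E)}\le\sum_{j\ge0}2^{sj}\big\|(a_{l-j})_{l\ge0}\big\|_{\ell^q(E)}=\Big(\sum_{j\ge0}2^{sj}\Big)\big\|(a_k)_{k\ge0}\big\|_{\ell^q(E)}=\frac{1}{1-2^{s}}\big\|(a_k)_{k\ge0}\big\|_{\ell^q(E)},$$
which is the asserted estimate with $C=(1-2^s)^{-1}$.

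I do not expect any genuine obstacle: the statement is exactly discrete Young's inequality, which is why it parallels \cite[Lemma 4.4.2]{RS96}. The only points worth a remark are that the sign condition $s<0$ is precisely what makes $\sum_{j\ge0}2^{sj}$ converge (the inequality fails for $s\ge0$), and that one should briefly justify that the $\ell^q(E)$-valued series $\sum_{j\ge0}2^{sj}(a_{l-j})_{l\ge0}$ converges absolutely and coincides coordinatewise with $(2^{sl}S^lf)_{l\ge0}$ — which is immediate since for fixed $l$ the sum over $j$ is finite.
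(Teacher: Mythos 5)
Your proof is correct and is essentially the paper's argument: both read $2^{sl}S^lf$ as the discrete convolution of the sequence $(2^{sk}S_kf)_{k\ge0}$ with the $\ell^1$ kernel $(2^{sj})_{j\ge0}$ and conclude by Young's inequality for discrete convolutions, with constant $\sum_{j\ge0}2^{sj}<\infty$ thanks to $s<0$. The only cosmetic difference is that you prove the Young inequality inline via Minkowski's inequality in $\ell^q(E)$ applied to the shifted sequences, which treats $q=\infty$ uniformly, whereas the paper invokes Young directly and handles $q=\infty$ as an analogous case.
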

\begin{proof} We consider $q<\infty$, the case $q=\infty$ is analogous.
Writing $Y=L^{p(r)}(\R^d,w;X)$, it follows from Young's inequality for discrete convolutions that
\begin{align*}
\|(2^{sl} S^{l} f)_{l\geq 0} \|_{\ell^{q}(Y)} &\, \leq \Big(\sum_{l=0}^\infty \Big( \sum_{k=0}^l 2^{s(l-k)} 2^{sk}\|S_k f\|_Y \Big)^q\Big)^{1/q}
\\ &\, \leq \Big( \sum_{l=0}^\infty 2^{s l} \Big)\Big(\sum_{k=0}^\infty  2^{sk}\|S_k f\|_Y^q \Big)^{1/q}
\leq C \|(2^{sk} S_{k} f)_{k\geq 0}\|_{\ell^{q}(Y)},
\end{align*}
where $C= \sum_{l\geq 0} 2^{sl}$ is finite by the assumption $s < 0$.
\end{proof}

\subsection{Paraproducts}\label{sec:paraproducts}

Let $X,Y$ be Banach spaces. As in \cite[Section 4.2]{RS96} we define the product $mf \in \TD(\R^d;Y)$ of $m\in \TD(\R^d; \calL(X,Y))$ and $f\in \TD(\R^d;X)$ by
$$mf = \lim_{l\to\infty} S^l m \cdot S^l f,$$
provided this limit exists in $\TD(\R^d;Y)$. If one factor is smooth with bounded derivatives or if $m\in L^r$ and $f\in L^{r'}$, then this definition yields the usual product of a function and a distribution or the pointwise product of functions, respectively (see \cite[Section 4.2.1]{RS96}).

As in \cite[Section 4.4]{RS96}, if the paraproducts
$$\Pi_1(m,f) = \sum_{k=2}^\infty (S^{k-2}m) (S_k f),
\qquad
\Pi_2(m,f) = \sum_{k=0}^\infty\sum_{j=-1}^1  (S_{k+j}m) (S_k f),$$
$$\Pi_3(m,f)= \sum_{k=2}^\infty (S_k m) (S^{k-2} f),$$
exist in $\TD(\R^d;Y)$, then $mf$ exists as well and one has
$$mf = \Pi_1(m,f) + \Pi_2(m,f) + \Pi_3(m,f).$$
Since $\supp \wh{\varphi}_k \subset \{2^{k-1} \leq |\xi|\leq \frac{3}{2}2^{k}\}$ for $k\geq 1$, for the Fourier supports of the summands we have
\begin{equation}\label{FsupportPi2}
 \supp \F  [(S_{k+j}m) (S_k f)] \subset \{  |\xi|\leq 5\cdot 2^{k}\}, \qquad k\geq 0, \quad j\in \{-1,0,1\},
\end{equation}
\begin{equation}\label{FsupportPi3}
 \supp \F [(S_k m) (S^{k-2} f)] \subset \{ 2^{k-3} \leq |\xi|\leq 2^{k+1}\}, \qquad k \geq 2.
 \end{equation}

\subsection{Estimates of $\Pi_1$}

The paraproducts are estimated in different ways. We start with $\Pi_1$. For the Bessel-potential spaces we use the Littlewood-Paley decomposition from Proposition \ref{prop:UMDHisF} and therefore require $X$ and $Y$ to have UMD.

\begin{lemma} \label{multiplication1} Let $X$ and $Y$ have \emph{UMD}, $s\in \R$, $p\in (1,\infty)$ and $w\in A_p$. Let $m:\R^d \to \calL(X,Y)$ be strongly measurable and assume that the image of $m$ is $\mathcal R$-bounded by $\mathcal R(m)$. Then for all $f\in H^{s,p}(\R^d,w;Y)$ the limit $\Pi_1(m,f)$ exists in $\TD(\R^d;Y)$  and
$$\|\Pi_1(m,f)\|_{H^{s,p}(\R^d,w;Y)} \leq C \mathcal R(m)\|f\|_{H^{s,p}(\R^d,w;X)}.$$
\end{lemma}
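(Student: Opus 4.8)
The idea is to exploit the randomized Littlewood--Paley decomposition of Proposition \ref{prop:UMDHisF} together with the $\mathcal{R}$-boundedness of $(S^{k-2}m)(x)$ from Lemma \ref{Slm-Rbounded} and of $(S_k)_{k\ge0}$ from Lemma \ref{Sk-Rbounded}. First I would handle the convergence issue: for $N\ge2$ set $g_N = \sum_{k=2}^N (S^{k-2}m)(S_k f)$ and observe that the Fourier support of $(S^{k-2}m)(S_k f)$ is contained in an annulus $\{c\,2^k \le |\xi|\le C\,2^k\}$ (this follows from $\supp\wh\varphi_k\subset\{2^{k-1}\le|\xi|\le \tfrac32 2^k\}$ and $\supp\F S^{k-2}m \subset \{|\xi|\le 2^{k-1}\}$, since then $2^{k-1}-2^{k-1} \le |\xi|$ is not quite right—one gets $\{2^{k-2}\le|\xi|\le 2^{k+1}\}$ say). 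Hence for fixed $j$ only finitely many $k$ contribute to $S_j g_N$, so once we have a uniform bound on $\|g_N\|_{H^{s,p}(\R^d,w;Y)}$, a standard argument (uniform bound plus Fourier-support localisation of the summands, as in \cite[Section 4.4]{RS96}) shows $(g_N)$ is Cauchy, hence convergent, in $\TD(\R^d;Y)$, and the limit $\Pi_1(m,f)$ satisfies the same bound by the Fatou property \eqref{fatou} applied in $F^{s}_{p,\infty}(\R^d,w;Y)$ together with \eqref{eq:TriebelLizorkinH}.

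The heart of the matter is the uniform estimate $\|g_N\|_{H^{s,p}(\R^d,w;Y)}\le C\,\mathcal{R}(m)\|f\|_{H^{s,p}(\R^d,w;X)}$, uniformly in $N$. By Proposition \ref{prop:UMDHisF}, since $Y$ has UMD, it suffices to bound
\[
\Big\| \sum_{j\ge0} r_j\, 2^{sj} S_j g_N \Big\|_{L^p(\Omega;L^p(\R^d,w;Y))}.
\]
Because $S_j g_N = \sum_{k} S_j\big((S^{k-2}m)(S_k f)\big)$ with the sum running only over those $k$ with $|k-j|\le C_0$ for a fixed constant $C_0$ (by the annular Fourier support just discussed), one can reindex and, up to summing $C_0$-many shifted copies and using the triangle inequality, reduce to controlling
\[
\Big\| \sum_{k\ge0} r_k\, 2^{sk}\, \widetilde S_k\big((S^{k-2}m)(S_k f)\big) \Big\|_{L^p(\Omega;L^p(\R^d,w;Y))},
\]
where $\widetilde S_k = S_{k+i}$ for some fixed $|i|\le C_0$ — and each $\widetilde S_k$ is still an $\mathcal R$-bounded family by Lemma \ref{Sk-Rbounded} (applied to the shifted decomposition, which again lies in $\Phi(\R^d)$ up to finitely many low-frequency terms). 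Now apply $\mathcal R$-boundedness twice: first pull $\widetilde S_k$ out using Lemma \ref{Sk-Rbounded}, then pull the operator $(S^{k-2}m)(x)$ out pointwise in $x$ using that $\{(S^{k-2}m)(x): k\in\N_0,\ x\in\R^d\}$ is $\mathcal R$-bounded by $2\|\varphi_0\|_{L^1}\mathcal R(m)$ (Lemma \ref{Slm-Rbounded}); this last step is the standard argument that $\mathcal R$-boundedness of a set of operators implies the corresponding bound for randomized sums with $x$-dependent operator coefficients, which one justifies by conditioning/Fubini in the variable $x$ and the definition of $\mathcal R$-boundedness. This yields
\[
\Big\| \sum_{k} r_k\, 2^{sk}\, \widetilde S_k\big((S^{k-2}m)(S_k f)\big)\Big\|_{L^p(\Omega;L^p(\R^d,w;Y))} \le C\,\|\varphi_0\|_{L^1}\,\mathcal R(m)\, \Big\| \sum_{k} r_k\, 2^{sk} S_k f\Big\|_{L^p(\Omega;L^p(\R^d,w;X))},
\]
and the right-hand side is $\le C\,\mathcal R(m)\|f\|_{H^{s,p}(\R^d,w;X)}$ again by Proposition \ref{prop:UMDHisF}.

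\textbf{Main obstacle.} The delicate point is the double application of $\mathcal R$-boundedness while keeping track of the shift between the frequency blocks: one must be careful that after localising $S_j$ onto the annulus of scale $2^k$ the remaining "outer" Littlewood--Paley projection is still drawn from a fixed finite collection of $\mathcal R$-bounded families (so that the constant does not blow up with $N$), and that the reindexing is genuinely a finite shift. A clean way to organise this, following \cite[Lemma 4.4.2 and Section 4.4]{RS96}, is to absorb the finitely many shifts into a single generalised Littlewood--Paley family and note that Lemma \ref{Sk-Rbounded} and Lemma \ref{Slm-Rbounded} hold verbatim for such families. The low-frequency terms $k=0,1$ and the finitely many boundary indices are handled trivially since $S^{k-2}=0$ there or $S^{k-2}m$ is a fixed bounded operator-valued function composed with a bounded projection. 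Once the uniform bound is in place, the convergence in $\TD(\R^d;Y)$ and the final estimate for $\Pi_1(m,f)$ follow as indicated.
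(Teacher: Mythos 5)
Your proposal follows essentially the same route as the paper's proof: reduce to the randomized Littlewood--Paley norm of Proposition \ref{prop:UMDHisF}, use the annular Fourier support of $(S^{k-2}m)(S_kf)$ so that only finitely many shifts $j$ contribute to each $S_n$, pull out the outer projections by Lemma \ref{Sk-Rbounded} and the operators $(S^{k-2}m)(x)$ pointwise via Fubini and Lemma \ref{Slm-Rbounded}, and close with Proposition \ref{prop:UMDHisF} again. The only point to tighten is the convergence step: a uniform bound alone is not enough, but applying your estimate to the tails $\sum_{k=L}^{K}(S^{k-2}m)(S_kf)$ bounds them by the tail of the convergent randomized series $\sum_{n\ge L} r_n 2^{sn}S_{n+j}f$, which is exactly how the paper obtains the Cauchy property in $H^{s,p}(\R^d,w;Y)$.
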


\begin{remark}\label{rem:R-bound-scalar}
 If $m$ is scalar-valued we have $\mathcal R(m) \leq 2\|m\|_\infty$, see \cite[Proposition 2.5]{KuWe}. So in this case the assumptions on $m$ reduce to $m\in L^\infty(\R^d)$.
\end{remark}

\begin{proof}[Proof of Lemma \ref{multiplication1}]
We write  $ \Pi_1(m,f) = \sum_{k\geq 2} f_k$ with $f_k = S^{k-2} m S_k f$. For each $n$, the support condition \eqref{FsupportPi3} implies
$$S_n f_k \neq 0 \qquad \text{at most for }\;k= n-1,..., n+3.$$
For $N,K,L\in \N$ with $L\leq K<N-3$ the support condition and the $\mathcal R$-boundedness of $(S_n)_{n\geq 0}$ in $\mathscr L(L^p(\R^d,w;Y))$ as shown in Lemma \ref{Sk-Rbounded} yield
\begin{align*}
 \Big \|\sum_{n=0}^N r_n 2^{sn} S_n \sum_{k=L}^Kf_k \Big\|_{L^p(\Omega; L^p(\R^d,w;Y))}
 &\,\leq \sum_{j=-1}^3 \Big \|\sum_{n=L}^K r_n 2^{sn} S_n f_{n+j} \Big\|_{L^p(\Omega; L^p(\R^d,w;Y))}\\
 &\, \leq C\sum_{j=-1}^3 \Big \|\sum_{n=L}^K r_n 2^{sn}  f_{n+j} \Big\|_{L^p(\Omega; L^p(\R^d,w;Y))}.
 \end{align*}
Fix $j\in \{-1,...,3\}$. Then by Fubini's theorem, Lemma \ref{Slm-Rbounded} and Proposition \ref{prop:UMDHisF},
\begin{align*}
 \Big\|\sum_{n=L}^K r_n 2^{sn} &\,  f_{n+j} \Big\|_{L^p(\Omega; L^p(\R^d,w;Y))}^p \\
 &\, = \int_{\R^d}  \Big\|\sum_{n=L}^K r_n 2^{sn}  S^{n-2+j} m(x) S_{n+j} f(x) \Big\|_{L^p(\Omega;Y)}^p w(x)\,dx\\
 &\, \leq \mathcal R(\{S^l m(x):x\in \R^d, l\in \N\})^p \int_{\R^d} \Big \|\sum_{n=L}^K r_n 2^{sn}   S_{n+j} f(x)\Big \|_{L^p(\Omega;X)}^p w(x)\,dx\\
 &\, \leq (C\mathcal R(m))^p\Big \|\sum_{n=L}^\infty r_n 2^{sn}   S_{n+j} f \Big\|_{L^p(\Omega; L^p(\R^d,w;X))}^p=:(C\mathcal R(m))^p A_L^p.
\end{align*}
Here $\sum_{n=L}^\infty r_n 2^{sn}   S_{n+j} f$ converges in $L^p(\Omega; L^p(\R^d,w;X))$ by Proposition \ref{prop:UMDHisF}, and thus $A_L \to 0$ as $L\to \infty$. It follows from Proposition \ref{prop:UMDHisF} that $\sum_{k=L}^K f_k \in H^{s,p}(\R^d,w;Y)$ and
\[\Big\|\sum_{k=L}^K f_k\Big\|_{H^{s,p}(\R^d,w;Y)}\leq  C\sup_{N\geq 0}\Big \|\sum_{n=0}^N r_n 2^{sn} S_n \sum_{k=L}^K f_k \Big \|_{L^p(\Omega; L^p(\R^d,w;Y))}\leq C \mathcal R(m) A_L.\]
We conclude that $\big(\sum_{k=0}^N f_k\big)_{N\geq 0}$ is a Cauchy sequence in $H^{s,p}(\R^d,w;Y)$. Hence $\Pi_1(m,f) = \sum_{k=0}^\infty f_k$ converges in $H^{s,p}(\R^d,w;Y)$ and, again by Proposition \ref{prop:UMDHisF},
\begin{equation*}
\big \| \Pi_1(m,f)\big\|_{H^{s,p}(\R^d,w;Y)}\leq  C\mathcal R(m) A_0 \leq C \mathcal R(m) \|f\|_{H^{s,p}(\R^d,w;X)}.\qedhere
\end{equation*}
\end{proof}

The corresponding estimate of $\Pi_1$ for $F$-spaces is more elementary and does not need the UMD property of the underlying Banach spaces. Here and in the sequel, for $m:\R^d\to \calL(X,Y)$ we write
$$\|m\|_\infty = \sup_{x\in \R^d} \|m(x)\|_{\calL(X,Y)}.$$
We will make use of a convergence criterion from Lemma \ref{para2} in the appendix.

\begin{lemma} \label{multiplication2} Let $X$ and $Y$ be Banach spaces, $s\in \R$, $p\in (1,\infty)$, $q\in [1,\infty]$ and $w\in A_\infty$.  Let $m:\R^d \to \calL(X,Y)$ be strongly measurable and assume that the image of $m$ is bounded. Then for all $f\in F^{s}_{p,q}(\R^d,w;X)$ the limit $\Pi_1(m,f)$ exists in $\TD(\R^d;Y)$ and
$$\|\Pi_1(m,f)\|_{F^{s}_{p,q}(\R^d,w;Y)} \leq C \|m\|_{\infty}\|f\|_{F^{s}_{p,q}(\R^d,w;X)}.$$
\end{lemma}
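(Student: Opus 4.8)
The plan is to follow the pattern of the proof of Lemma \ref{multiplication1}, but to replace the randomized Littlewood--Paley decomposition (which forced the UMD assumption there) by the elementary estimate for series whose Fourier transforms are supported in dyadic annuli, supplied by Appendix \ref{sec:analytic}. Write $\Pi_1(m,f) = \sum_{k\ge 2} f_k$ with $f_k = (S^{k-2}m)\,(S_k f)$. First I would check that each $f_k$ is a well-defined element of $L^p(\R^d,w;Y)$: since $S^{k-2}m = 2^{(k-2)d}\varphi_0(2^{k-2}\,\cdot)*m$ is the convolution of the strongly measurable bounded function $m$ against an $L^1$-normalized scalar kernel, one has $\|S^{k-2}m\|_\infty \le \|\varphi_0\|_{L^1(\R^d)}\,\|m\|_\infty$ (as in Lemma \ref{Slm-Rbounded}), whence the pointwise bound
\[\|f_k(x)\|_Y \le \|S^{k-2}m(x)\|_{\calL(X,Y)}\,\|S_k f(x)\|_X \le \|\varphi_0\|_{L^1(\R^d)}\,\|m\|_\infty\,\|S_k f(x)\|_X \qquad \text{a.e.},\]
and $S_k f\in L^p(\R^d,w;X)$ because $f\in F^{s}_{p,q}(\R^d,w;X)$.

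Second, I would record the Fourier support of the summands. Since $S^{k-2}m$ has spectrum in $\{|\xi|\le \tfrac32 2^{k-2}\}$ and $S_k f$ has spectrum in $\{2^{k-1}\le|\xi|\le\tfrac32 2^k\}$, the same computation that produces \eqref{FsupportPi3} (it is symmetric in the two factors) gives $\supp\widehat{f_k}\subset\{2^{k-3}\le|\xi|\le 2^{k+1}\}$ for $k\ge 2$. Thus $(f_k)_{k\ge 2}$ is a sequence in $L^p(\R^d,w;Y)$ whose Fourier transforms sit in dyadic annuli of the standard type.

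Third, combining the pointwise bound with the definition of the $F$-norm yields
\[\big\|(2^{sk}f_k)_{k\ge 2}\big\|_{L^p(\R^d,w;\ell^q(Y))} \le \|\varphi_0\|_{L^1(\R^d)}\,\|m\|_\infty\,\big\|(2^{sk}S_k f)_{k\ge 0}\big\|_{L^p(\R^d,w;\ell^q(X))} = C\|m\|_\infty\,\|f\|_{F^{s}_{p,q}(\R^d,w;X)},\]
which in particular is finite. At this point I would invoke the convergence criterion of Lemma \ref{para2} together with the corresponding weighted estimate for series with spectrum in annuli from Appendix \ref{sec:analytic}: since the $\widehat{f_k}$ are supported as above and the left-hand side of the last display is finite, the partial sums $\sum_{k=2}^N f_k$ converge in $\TD(\R^d;Y)$ to some $g$, this $g$ lies in $F^{s}_{p,q}(\R^d,w;Y)$, and $\|g\|_{F^{s}_{p,q}(\R^d,w;Y)} \le C\,\|(2^{sk}f_k)_{k\ge 2}\|_{L^p(\R^d,w;\ell^q(Y))}$. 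Hence $\Pi_1(m,f)=g$ exists in $\TD(\R^d;Y)$ and obeys the asserted bound.

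In contrast to Lemma \ref{multiplication1}, the proof proper is short and contains no real obstacle, because all the analytic work has been moved into Appendix \ref{sec:analytic}. The only points requiring a little care inside the argument are the Fourier-support bookkeeping in the second step and, when $q=\infty$, the fact that the partial sums converge only in $\TD(\R^d;Y)$, so that membership in $F^{s}_{p,\infty}$ and the norm estimate must be recovered from the Fatou property \eqref{fatou}; both of these are already built into Lemma \ref{para2}. The genuine difficulty therefore lies entirely in having Lemma \ref{para2} and the accompanying series estimate available in the needed generality (vector-valued, all $q\in[1,\infty]$, general $w\in A_\infty$).
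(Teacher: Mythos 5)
Your argument is correct and is essentially the paper's own proof: both decompose $\Pi_1(m,f)=\sum_{k\ge2}(S^{k-2}m)(S_kf)$, note that the summands' Fourier supports lie in dyadic annuli so that condition \eqref{6000} of Lemma \ref{para2} holds (allowing $q=1$ and $w\in A_\infty$), bound $\|S^{k}m\|_\infty\le\|\varphi_0\|_{L^1(\R^d)}\|m\|_\infty$ by Young's inequality, and conclude via the convergence criterion and estimate \eqref{6011}. No gaps.
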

\begin{proof}
Let again $\Pi_1(m,f) = \sum_{k\geq 2} f_k$ with $f_k = S^{k-2}m S_k f$.  We apply the estimate \eqref{6011} of Lemma \ref{para2}. It follows from \eqref{FsupportPi3} that the support condition \eqref{6000} holds. Therefore, $q=1$ and $w\in A_{\infty}$ are included. To check that the corresponding right-hand side of \eqref{6011} is finite we estimate
\[
\big\| \big( 2^{sk} f_k\big)_{k\geq 2}\big\|_{L^p(\R^d,w; \ell^q(Y))} \leq C \sup_{k\geq 0}\|S^{k}m\|_{\infty} \|f\|_{F^{s}_{p,q}(\R^d,w;X)}.
\]
Using $S^km =  2^{kd} \varphi_0(2^k\cdot)*m$ and $\|2^{kd} \varphi_0(2^k\cdot)\|_{L^1(\R^d)} = \|\varphi_0\|_{L^1(\R^d)}$, Young's inequality implies
$$\|S^{k}m\|_{\infty} \leq \|\varphi_0\|_{L^1(\R^d)} \|m\|_{\infty}, \qquad k\geq 0.$$
Hence $\Pi_1(m,f)$ exists by Lemma \ref{para2} and the asserted estimate holds true.
\end{proof}

\subsection{Special estimates of $\Pi_2$ and $\Pi_3$} We now estimate $\Pi_2$ and $\Pi_3$ as it is needed for the multiplication with the characteristic function $\one_{\R_+^d}$ of the half-space. Here we specialize to power weights of the form
$$w_\gamma(x',t) = |t|^\gamma, \qquad \gamma\in (-1,p-1),$$
and consider functions $m$ which depend on the last coordinate $t$ only. Following the considerations of \cite{Franke86} and \cite[Section 4.6.2]{RS96}, the main tools are Jawerth-Franke embeddings and convergence criteria for weighted spaces of entire analytic functions, as presented in Appendix \ref{sec:analytic}. In the rest of this subsection we can allow for general Banach spaces $X$ and $Y$.

To explain the parameters below, recall from \cite[Proposition 9.1.5]{GraModern} that for $w_{\gamma}$ the dual weight with respect to $p\in (1,\infty)$  is given by $w_{\gamma'}$, where
\begin{equation}\label{6016}
\gamma' = - \frac{\gamma}{p-1}, \qquad \frac{1+\gamma'}{p'} = 1-\frac{1+\gamma}{p}.
\end{equation}

\begin{lemma}\label{multiplication3}Let $X$ and $Y$ be Banach spaces, $p\in (1,\infty)$, $\gamma \in (-1,p-1)$ and
$- \frac{1+\gamma'}{p'} < s < \frac{1+\gamma}{p}.$ Let the numbers $r$ and  $\mu$  satisfy
\begin{equation}\label{6013}
1<r< \infty,\qquad \mu = 0, \quad  \qquad \text{ in case } \; 0\leq s <\frac{1+\gamma}{p},
\end{equation}
\begin{equation}\label{6014}
1<r<  \frac{1}{-s},\qquad \mu = 0, \quad  \qquad \text{ in case }\; -\frac{1}{p'} < s < 0,
\end{equation}
\begin{equation}\label{6015}
1<r<p',\qquad  \frac{\mu}{r} = -s - \frac{1}{p'} +\varepsilon,\quad  \qquad \text{ in case }\; - \frac{1+\gamma'}{p'} < s \leq -\frac{1}{p'},
\end{equation}
for some $\varepsilon > 0$. Let $m \in B_{r,\infty}^{\frac{1+\mu}{r}}(\R, w_\mu;\calL(X,Y))$ and consider it as a distribution on $\R^d$ which only depends on the last coordinate. Then for all $f\in F_{p,\infty}^s(\R^d,w_{\gamma};X)$ the limit $\Pi_2(m,f)$ exists in $\TD(\R^d;Y)$ and
 $$\|\Pi_2(m,f)\|_{F_{p,1}^s(\R^d,w_{\gamma};Y)} \leq C \|m\|_{B_{r,\infty}^{\frac{1+\mu}{r}}(\R, w_\mu;\calL(X,Y))} \|f\|_{F_{p,\infty}^s(\R^d,w_{\gamma};X)}.$$
\end{lemma}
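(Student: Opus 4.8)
The proof will follow the scheme of \cite[Section 4.6.2]{RS96} and \cite{Franke86}, transferring it to the weighted, vector-valued, operator-valued setting with the help of the auxiliary results in Appendix \ref{sec:analytic}. The starting point is the observation, via \eqref{FsupportPi2}, that each summand $g_k := \sum_{j=-1}^1 (S_{k+j}m)(S_k f)$ of $\Pi_2(m,f)$ has Fourier support in $\{|\xi|\leq 5\cdot 2^k\}$. Hence $\Pi_2(m,f)$ is a series of entire analytic functions with low-frequency Fourier support, and its convergence in $\TD(\R^d;Y)$ together with the target $F^s_{p,1}$-estimate will be a consequence of a weighted convergence/summation lemma for such series (the analogue of Lemma \ref{para2}, but in the low-frequency case where $q=1$ on the target side costs nothing). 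So it suffices to control
$$\big\|\big(2^{sk} g_k\big)_{k\geq 0}\big\|_{L^p(\R^d,w_\gamma;\ell^1(Y))}\leq C \|m\|_{B_{r,\infty}^{(1+\mu)/r}(\R,w_\mu;\calL(X,Y))}\|f\|_{F^s_{p,\infty}(\R^d,w_\gamma;X)}.$$

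\textbf{Key steps.} First I would pointwise estimate $\|g_k(x)\|_Y \leq C \big(\max_{|j|\leq 1}\|S_{k+j}m(t)\|_{\calL(X,Y)}\big)\,\|S_k f(x)\|_X$, since $m$ depends on $t$ only; here $x=(x',t)$. Next, using that the $B$-norm of $m$ on $\R$ controls $\sup_{l\geq 0} 2^{l(1+\mu)/r}\|S_l m\|_{L^r(\R,w_\mu;\calL(X,Y))}$, I would pass from $\ell^\infty$-in-$l$ information on $\|S_l m\|_{L^r(\R,w_\mu)}$ to a pointwise-in-$t$ bound after paying a power of $2^l$. Concretely, the $\ell^1$-sum in $k$ is reorganized and estimated by Hölder's inequality in the $t$-variable, splitting the exponent $p$ into the $L^r$-part carrying $S_{k+j}m$ and a residual exponent carrying $S_k f$; the three parameter regimes \eqref{6013}, \eqref{6014}, \eqref{6015} correspond precisely to the three ways the Hölder exponents and the weight exponents $\gamma,\mu$ must be balanced so that the resulting mixed-norm expression in $f$ is dominated by $\|f\|_{F^s_{p,\infty}(\R^d,w_\gamma;X)}$. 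In the regimes with $s\geq 0$ and $-1/p'<s<0$ one keeps $\mu=0$ (so $m\in B_{r,\infty}^{1/r}(\R)$, essentially a multiplier-type space), whereas for $s\leq -1/p'$ one must introduce the weight $w_\mu$ and the parameter $\mu/r=-s-1/p'+\varepsilon$ to absorb the loss; this is where the Jawerth–Franke type embeddings from Appendix \ref{sec:analytic} enter, allowing one to trade smoothness for integrability and to land in $F^s_{p,1}$ on the left. Summation over $k$ of the geometric-type series (controlled because the relevant exponent combination is strictly negative, thanks to the strict inequalities $-\tfrac{1+\gamma'}{p'}<s<\tfrac{1+\gamma}{p}$ and the $\varepsilon>0$ in \eqref{6015}) then yields the claimed bound, and the same estimate applied to tails $\sum_{k=L}^K g_k$ gives the Cauchy property, hence convergence of $\Pi_2(m,f)$ in $\TD(\R^d;Y)$ (indeed in $F^s_{p,1}$).

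\textbf{Main obstacle.} The technical heart is the correct bookkeeping of Hölder exponents together with the Muckenhoupt weights $w_\gamma$ and $w_\mu$ in the mixed-norm space $L^p(\R^{d-1};L^r(\R,w_\mu))$, and verifying that in each of the three regimes the residual exponent on the $f$-factor stays in the admissible range so that the Fefferman–Stein inequality \eqref{Fefferman-Stein} and the weighted analytic-function estimates of Appendix \ref{sec:analytic} apply with the right $A_p$-index. In particular the boundary regime \eqref{6015}, where one simultaneously changes the microscopic index, the integrability $r<p'$, and the weight $w_\mu$, is the delicate one: one must check that $w_\mu\in A_r(\R)$ (i.e.\ $-1<\mu<r-1$, which follows from $r<p'$ and the bound on $s$) and that the combined weight behaves correctly under the one-dimensional sub-embedding. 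I expect this exponent-and-weight juggling, rather than any conceptual difficulty, to be the step requiring the most care; everything else is a routine adaptation of the unweighted scalar argument, with $\|m\|_\infty$ replaced throughout by $\mathcal R(m)$ or the appropriate $\calL(X,Y)$-norm and with the pointwise product estimates justified by the one-dimensional dependence of $m$.
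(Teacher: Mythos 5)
Your overall plan (reduce to an estimate of $\big\|(2^{sk}g_k)_{k}\big\|$ via the low-frequency support condition \eqref{FsupportPi2} and Lemma \ref{para2}, then separate $m$ from $f$ by H\"older's inequality in the $t$-variable using the Besov regularity of $m$) is the right skeleton, and your identification of the three regimes with the choice of $\mu$ is consistent with the paper. However, there is a genuine gap at the heart of the argument: you never explain where the microscopic improvement from $\ell^\infty$-in-$k$ (on the $f$-side) to $\ell^1$-in-$k$ (on the left-hand side) comes from. You attribute the convergence of the $k$-sum to a ``geometric-type series'' with a ``strictly negative'' exponent combination, but no such slack exists here: the scaling is sharp ($\sigma=\frac{1+\mu}{r}$ exactly), and after H\"older the $m$-factor contributes only a $\sup_k$ bound, so a naive $\ell^1$-sum over $k$ of terms controlled merely in $\ell^\infty$ diverges. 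In the paper the summability is produced by the $q$-independence of the Jawerth--Franke embeddings, used on \emph{both} sides and in \emph{all three} regimes, not (as you suggest) only as an auxiliary device for the endpoint case \eqref{6015}: one first dominates the $F^s_{p,1}(\R^d,w_\gamma;Y)$-norm by an $\ell^{p}\big(L^{p(p_1)}(\R^d,w_{\gamma_1};Y)\big)$-norm with $p_1<p$ and shifted smoothness $s-\frac{1+\gamma}{p}+\frac{1+\gamma_1}{p_1}$ via \eqref{jf_discrete_BF} (preceded by the resummation estimate \eqref{6010}, which is the only place a positivity condition on the exponents is needed), then applies H\"older in $t$ with $\frac{1}{p_1}=\frac1r+\frac{1}{p_2}$, $p_2>p$, and finally dominates the resulting $\ell^p\big(L^{p(p_2)}(\R^d,w_{\gamma_2};X)\big)$-norm of $(S_nf)_n$ by $\|f\|_{F^s_{p,\infty}(\R^d,w_\gamma;X)}$ via \eqref{jf_discrete_FB}. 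Your version, which splits the exponent $p$ itself rather than an auxiliary $p_1<p$, leaves you with no way to return to the weight $w_\gamma$ and exponent $p$ in the $t$-variable on the left-hand side, and with no source of $\ell^1$-summability.

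A second, related problem is your proposed first step of converting the $\ell^\infty$-in-$l$ bound on $\|S_lm\|_{L^r(\R,w_\mu;\calL(X,Y))}$ into a pointwise-in-$t$ bound ``after paying a power of $2^l$'': a Nikolskii-type inequality costs exactly the factor $2^{l(1+\mu)/r}$ that the Besov norm provides, so this route yields only $\sup_l\|S_lm\|_\infty\leq C$ with no decay in $l$, which cannot drive the $k$-sum. The $m$-factor must be kept in $L^r(\R,w_\mu)$ throughout and paired against $f$ measured in the higher integrability $L^{p_2}$ in $t$; that pairing is precisely what the two Jawerth--Franke embeddings make compatible with the $F^s_{p,1}$-target and the $F^s_{p,\infty}$-source. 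Your discussion of the weight bookkeeping ($w_\mu\in A_r$, the role of $\varepsilon$ in \eqref{6015}) is sensible, but without the two-sided embedding structure the proof does not close.
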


\begin{remark} \label{rem:improve} In the estimate, for the microscopic parameters we have $q =1$ on the left-hand side and $q = \infty$ on the right-hand side. Such a microscopic improvement is possible because only special frequencies of $mf$ are in $\Pi_2$. Combined with $F_{p,1}^s\hookrightarrow H^{s,p}\hookrightarrow F_{p,\infty}^s$, it immediately gives an estimate of $\Pi_2$ in the Bessel-potential spaces.
\end{remark}

\begin{proof}[Proof of Lemma \ref{multiplication3}]
For a clearer presentation we assume that $\sum_{k=0}^\infty  S_{k+j}m S_k f$ exist for $j\in \{-1,0,1\}$ in $\TD(\R^d;Y)$, such that then also $\Pi_2(m,f)$ exists. This will be justified by means of Lemma \ref{para2} and the estimates in Step 3. In Step 4 we will show how the numbers $p_1$, $p_2$, $\gamma_1$ and $\gamma_2$ introduced in the first two steps can be chosen.

Recall the mixed-norm spaces $L^{p(p_1)}(\R^d,w;X) = L^p(\R^{d-1}; L^{p_1}(\R,w_{\gamma_1};X))$.

\emph{Step 1.} Suppose $p_1$ and $\gamma_1$ satisfy
\begin{equation}\label{Pi2_cond_1}
  1<p_1<p, \qquad -1 < \gamma_1 < p_1-1, \qquad \frac{\gamma_1}{p_1}\geq \frac{\gamma}{p}, \qquad s - \frac{1+\gamma}{p} + \frac{1+\gamma_1}{p_1}   > 0.
\end{equation}
For each $n$ the Fourier support of $S_n\big( \sum_{k=0}^\infty  S_{k+j}m S_k f\big)$ is contained in $\{|\xi|\leq  3\cdot 2^{n}\}$. The Jawerth-Franke embedding \eqref{jf_discrete_BF} thus gives
\begin{align*}
\|\Pi_2(m,f)\|_{F_{p,1}^s(\R^d,w_{\gamma};Y)}&\,  \leq \sum_{j=-1}^1 \Big \| \Big ( 2^{sn} S_n\sum_{k=0}^\infty  S_{k+j}m S_k f\Big)_{n \geq 0} \Big \|_{L^p(\R^d,w_{\gamma}; \ell^1(Y))}\\
&\, \leq C \sum_{j=-1}^1 \Big \| \Big ( 2^{(s - \frac{1+\gamma}{p} + \frac{1+\gamma_1}{p_1})n} S_n \sum_{k=0}^\infty  S_{k+j}m S_k f\Big)_{n \geq 0} \Big \|_{\ell^p(L^{p(p_1)}(\R^d,w_{\gamma_1};Y))}.
\end{align*}
Fix $j\in \{-1,0,1\}$. Due to \eqref{FsupportPi2}, the Fourier supports of $(S_{k+j}m S_k f)_{k\geq 0}$ are subject to \eqref{6001}. Since $w_{\gamma_1}\in A_{p_1}$ and $s - \frac{1+\gamma}{p} + \frac{1+\gamma_1}{p_1} > 0$, we may apply \eqref{6010} with $q = p > 1$ to obtain
\begin{align}
 \Big \| \Big ( 2^{(s - \frac{1+\gamma}{p} + \frac{1+\gamma_1}{p_1})n} S_n \sum_{k=0}^\infty  &\, S_{k+j}m S_k f\Big)_{n \geq 0} \Big \|_{\ell^p(L^{p(p_1)}(\R^d,w_{\gamma_1};Y))}\nonumber\\
 &\,\leq C \Big \| \Big ( 2^{(s - \frac{1+\gamma}{p} + \frac{1+\gamma_1}{p_1})k}  S_{k+j}m S_k f\Big)_{k \geq 0} \Big \|_{\ell^p(L^{p(p_1)}(\R^d,w_{\gamma_1};Y))}.\label{701}
\end{align}

\emph{Step 2.} Suppose $p_2$ and $\gamma_2$ satisfy
\begin{equation}\label{Pi2_cond_2}
 p < p_2 < \infty, \qquad -1 < \gamma_2 < p_2-1, \qquad  \frac{\gamma}{p}\geq \frac{\gamma_2}{p_2}.
\end{equation}
Define the numbers $r$ and $\mu$ by
$$\frac1r = \frac{1}{p_1} - \frac{1}{p_2}, \qquad \frac{\mu}{r} = \frac{\gamma_1}{p_1} - \frac{\gamma_2}{p_2}.$$
It follows from H\"older's inequality, applied in the last coordinate $t$ with exponent $\frac{p_2}{p_1} > 1$, that
\begin{align}
\Big \| \Big ( 2^{(s - \frac{1+\gamma}{p} + \frac{1+\gamma_1}{p_1})k}&\,  S_{k+j}m S_k f\Big)_{k \geq 0} \Big \|_{\ell^p(L^{p(p_1)}(\R^d,w_{\gamma_1};Y))}\notag\\
&\, \leq   \Big \| \Big( \Big \| 2^{(\frac{1+\gamma_1}{p_1}- \frac{1+\gamma_2}{p_2})k} S_k m  \Big \|_{L^{r}(\R, w_\mu; \calL(X,Y))} \Big)_{k\geq 0} \Big \|_{\ell^\infty(L^\infty(\R^{d-1}))}\label{667} \\
&\, \qquad \qquad \times \Big \| \Big (   2^{(s- \frac{1+\gamma}{p} + \frac{1+\gamma_2}{p_2})n}    S_{n} f\Big)_{n\geq 0}\Big\|_{\ell^p(L^{p(p_2)}(\R^d,w_{\gamma_2};X))}.\notag
\end{align}
For the second factor we use the Jawerth-Franke embedding \eqref{jf_discrete_FB}, which gives
\begin{align*}
\Big \| \Big (   2^{(s- \frac{1+\gamma}{p} + \frac{1+\gamma}{p_2})n}    S_{n} f\Big)_{n\geq 0} \Big\|_{\ell^p(L^{p(p_2)}(\R^d,w_{\gamma_2};X))} &\leq C  \Big \|(2^{sn}  S_nf)_{n \geq 0}\Big \|_{L^p(\R^d,w_{\gamma};\ell^\infty(X))} \\ & = C \|f\|_{F_{p,\infty}^s(\R^d,w_{\gamma};X)}.
\end{align*}
Consider the first factor. Since $m$ does not depend on $x'\in \R^{d-1}$, it is elementary to see that
$$S_k m = \mathcal F^{-1} ( \wh{\varphi}_k \wh{m}) = \mathcal F_t^{-1} (  \wh{\varphi}_k(0,\cdot) \mathcal F_t m).$$
Observe further that $(\mathcal F_t^{-1} \wh{\varphi}_k(0,\cdot))_{k\geq 0} \in \Phi(\R)$. Therefore
\begin{align*}
 \Big \|  \Big (\Big \|2^{(\frac{1+\gamma_1}{p_1}- \frac{1+\gamma_2}{p_2})k} &\,S_k m \Big\|_{L^{r}(\R, w_\mu; \calL(X,Y))} \Big)_{k\geq 0}  \Big \|_{\ell^\infty(L^\infty(\R^{d-1}))}= \|m\|_{B_{r,\infty}^{\sigma}(\R,w_\mu;\calL(X,Y))},
\end{align*}
where we have set $$\sigma = \frac{1+\mu}{r} = \frac{1+\gamma_1}{p_1}- \frac{1+\gamma_2}{p_2}.$$

\emph{Step 3.}  In the next step we find $p_1$, $\gamma_1$, $p_2$ and $\gamma_2$ satisfying \eqref{Pi2_cond_1} and \eqref{Pi2_cond_2}. Then it follows from \eqref{667} that
$$\big(2^{(s- \frac{1+\gamma}{p} + \frac{1+\gamma_1}{p_1})k} S_{k+j}m S_kf \big)_{k\geq 0} \in \ell^p(L^{p(p_1)}(\R^d,w_{\gamma_1};Y)).$$
Thus $\sum_{k\geq 0} S_{k+j}m S_k$ exists in $\TD(\R^d,Y)$ for $j\in \{-1,0,1\}$ by Lemma \ref{para2} and the estimate \eqref{701} is valid. Hence also $\Pi_2(m,f)$ exists, and the considerations of Step 1 show that it can be estimated as asserted.

\emph{Step 4.} Here and in the sequel, by $a\searrow b$ we mean that $a$ is chosen larger but arbitrarily close to $b$. Similar for $a\nearrow b$. We seek for parameters $p_1$, $\gamma_1$, $p_2$, $\gamma_2$ satisfying \eqref{Pi2_cond_1} and \eqref{Pi2_cond_2} such that $B_{r,\infty}^{\sigma}(\R,w_\mu;\calL(X,Y))$ becomes as large as possible. For each admissible choice of these parameters we have $\sigma = \frac{1+\mu}{r}$ and $\frac{\mu}{r} = \frac{\gamma_1}{p_1} - \frac{\gamma_2}{p_2}\geq 0.$ In view of the necessary and sufficient conditions for Sobolev embeddings from \cite[Theorem 1.1]{MeyVer1}, we thus aim to minimize $\sigma$ and $\frac{\mu}{r}$. In any case, the choices
$$p_2 \searrow p, \qquad \gamma_2 = \frac{\gamma}{p} p_2,$$
are optimal in this sense and satisfy \eqref{Pi2_cond_2}.

\emph{Substep 4.1.} Let $0\leq s < \frac{1+\gamma}{p}$ as in \eqref{6013}. Here the choices
$$p_1 \nearrow p, \qquad \gamma_1 =  \frac{\gamma}{p} p_1,$$ satisfy \eqref{Pi2_cond_1}. This leads to $\mu = 0$ and that $r$ may be arbitrarily large.

\emph{Substep 4.2.} Let $\frac{1}{p}-1 < s< 0$ as in \eqref{6014}. Choosing $ \gamma_1 =  \frac{\gamma}{p} p_1$, to satisfy $s -\frac{1+\gamma}{p} + \frac{1+\gamma_1}{p_1}>0$ we have to restrict to $1<p_1 < \frac{1}{\frac{1}{p}-s}$. It is possible to choose such $p_1$ by assumption in this substep. For $p_1 \nearrow \frac{1}{\frac{1}{p}-s}$ the condition \eqref{Pi2_cond_1} is indeed satisfied. This results  in $\mu = 0$ and $r < \frac{1}{-s}$.

\emph{Substep 4.3.}  Let  $\frac{1+\gamma}{p} - 1< s \leq \frac{1}{p}-1$ as in \eqref{6015}. This is only possible for $\gamma < 0$. Here $\frac{\gamma_1}{p_1} = \frac{\gamma}{p}$ is not allowed, since there is no $p_1 > 1$ with $s - \frac{1}{p} + \frac{1}{p_1} > 0$. So we choose
$$p_1\searrow 1, \qquad \gamma_1 \searrow \Big(\frac{1+\gamma}{p} -s\Big)p_1 - 1.$$
First this gives $r<p'$. Write $p_1 = 1+\varepsilon_1$ and $\frac{\gamma_1}{p_1} = \frac{1+\gamma}{p} -s - \frac{1}{p_1} + \varepsilon_2$, where $\varepsilon_1,\varepsilon_2>0$. Then $\frac{\mu}{r} = \frac{\gamma_1}{p_1} - \frac{\gamma_2}{p_2} = -s -\frac{1}{p'} + \varepsilon_1+ \varepsilon_2$. Setting $\varepsilon = \varepsilon_1 + \varepsilon_2$, we may thus choose $r$ and $\mu$ as asserted.
 \end{proof}

The estimate of $\Pi_3$ is similar. Again there is a microscopic improvement, see Remark \ref{rem:improve}.

\begin{lemma}  \label{multiplication4} Let $X$ and $Y$ be Banach spaces, $p\in (1,\infty)$, $\gamma \in (-1,p-1)$ and $- \frac{1+\gamma'}{p'} < s < \frac{1+\gamma}{p}.$ Let the numbers $r $ and $\mu$ satisfy
\begin{equation}\label{6018}
 1<r< \infty,\qquad \frac{\mu}{r} = s - \frac{1}{p}  +\varepsilon,  \quad\qquad  \text{ in case }\; \frac{1}{p} \leq s < \frac{1+\gamma}{p},
 \end{equation}
\begin{equation}\label{6019}
1<r<  \frac{1}{s},\qquad \mu = 0,\quad \qquad \text{ in case }\; 0< s <\frac{1}{p},
\end{equation}
\begin{equation}\label{6020}
1<r<\infty,\qquad  \mu =0, \quad \qquad \text{ in case }\; - \frac{1+\gamma'}{p'} < s \leq 0,
\end{equation}
for some $\varepsilon > 0$. Let  $m \in B_{r,\infty}^{\frac{1+\mu}{r}}(\R, w_\mu;\calL(X,Y))$ and consider it as a distribution on $\R^d$ which only depends on the last coordinate. Then for all $f\in F_{p,\infty}^s(\R^d,w_{\gamma};X)$ the limit $\Pi_3(m,f)$ exists in $\TD(\R^d;Y)$ and
 $$\|\Pi_3(m,f)\|_{F_{p,1}^s(\R^d,w_{\gamma};Y)} \leq C \|m\|_{B_{r,\infty}^{\frac{1+\mu}{r}}(\R, w_\mu;\calL(X,Y))} \|f\|_{F_{p,\infty}^s(\R^d,w_{\gamma};X)}.$$
\end{lemma}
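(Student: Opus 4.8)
The plan is to follow the proof of Lemma~\ref{multiplication3} almost verbatim, the only structural changes being that the summands of $\Pi_3$ now have \emph{annular} Fourier supports, cf. \eqref{FsupportPi3}, and that the low-frequency factor is $S^{k-2}f$ instead of $S_kf$. Write $\Pi_3(m,f) = \sum_{k\geq 2} g_k$ with $g_k = (S_km)(S^{k-2}f)$. By \eqref{FsupportPi3} the Fourier support of $g_k$ lies in $\{2^{k-3}\leq|\xi|\leq 2^{k+1}\}$, so $S_ng_k = 0$ whenever $|n-k|>3$. As in Step~3 of Lemma~\ref{multiplication3}, I would first assume that the series defining $\Pi_3(m,f)$ converges in $\TD(\R^d;Y)$; its existence then follows a posteriori from Lemma~\ref{para2} together with the norm bounds established below.

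The estimate itself proceeds in the following chain. Pick auxiliary exponents $1<p_1<p<p_2<\infty$ and weight exponents $\gamma_1,\gamma_2$ with $w_{\gamma_1}\in A_{p_1}$, $w_{\gamma_2}\in A_{p_2}$, coupled by $\frac1r = \frac1{p_1}-\frac1{p_2}$ and $\frac{\mu}{r} = \frac{\gamma_1}{p_1}-\frac{\gamma_2}{p_2}$, so that $\sigma := \frac{1+\mu}{r} = \frac{1+\gamma_1}{p_1}-\frac{1+\gamma_2}{p_2}$, exactly as in Steps~1--2 of Lemma~\ref{multiplication3}. By the definition of the $F_{p,1}^s$-norm, the Jawerth--Franke embedding \eqref{jf_discrete_BF} (applicable since $p_1<p$), and then the elementary collapse of $(S_n\Pi_3)_n$ onto $(g_k)_k$ (using $S_ng_k=0$ for $|n-k|>3$ and the uniform boundedness of the $S_n$ on the mixed-norm spaces), one gets
$$\|\Pi_3(m,f)\|_{F_{p,1}^s(\R^d,w_\gamma;Y)} \leq C\,\big\|\big(2^{s_1 k}g_k\big)_{k\geq 2}\big\|_{\ell^p(L^{p(p_1)}(\R^d,w_{\gamma_1};Y))}, \qquad s_1 := s - \tfrac{1+\gamma}{p} + \tfrac{1+\gamma_1}{p_1}.$$
Next, H\"older's inequality in the last variable with exponents $r$ and $p_2$, as in \eqref{667} (using that $m$ is independent of $x'$), splits $g_k = (S_km)(S^{k-2}f)$; taking $\ell^\infty$ over $k$ on the $m$-factor and keeping $\ell^p$ over $k$ on the $f$-factor yields, with $s_2 := s_1 - \sigma = s - \frac{1+\gamma}{p} + \frac{1+\gamma_2}{p_2}$,
$$\big\|\big(2^{s_1 k}g_k\big)_k\big\|_{\ell^p(L^{p(p_1)}(\R^d,w_{\gamma_1};Y))} \leq \Big(\sup_{k\geq 0} 2^{\sigma k}\|S_km\|_{L^r(\R,w_\mu;\calL(X,Y))}\Big)\big\|\big(2^{s_2 k}S^{k-2}f\big)_k\big\|_{\ell^p(L^{p(p_2)}(\R^d,w_{\gamma_2};X))}.$$
The first factor equals $\|m\|_{B_{r,\infty}^{\sigma}(\R,w_\mu;\calL(X,Y))}$, just as in the proof of Lemma~\ref{multiplication3}. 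For the second factor I would invoke Lemma~\ref{lem:para1} --- which requires the shifted smoothness $s_2$ to be \emph{strictly negative} --- to replace $S^{k-2}f$ by $S_kf$, and then the Jawerth--Franke embedding \eqref{jf_discrete_FB} (valid since $p_2>p$, provided $\frac{\gamma_2}{p_2}\leq\frac{\gamma}{p}$ and $s_2 - \frac{1+\gamma_2}{p_2} = s - \frac{1+\gamma}{p}$) to bound it by $C\|f\|_{F_{p,\infty}^s(\R^d,w_\gamma;X)}$. Chaining the three displays gives the asserted inequality, and the finiteness of the right-hand side justifies, via Lemma~\ref{para2}, the existence of $\Pi_3(m,f)$ in $\TD(\R^d;Y)$.

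The main obstacle is the parameter bookkeeping: one must check that in each of the three regimes \eqref{6018}, \eqref{6019}, \eqref{6020} for $s$ there exist admissible $p_1,\gamma_1,p_2,\gamma_2$ satisfying all constraints ($w_{\gamma_i}\in A_{p_i}$, the weight-sum identities, $s_2<0$, $\frac{\gamma_2}{p_2}\leq\frac{\gamma}{p}$) while rendering $B_{r,\infty}^{\sigma}(\R,w_\mu;\calL(X,Y))$ as large as possible, i.e. minimizing $\sigma$ and $\mu/r$ in the sense of the Sobolev embeddings of \cite[Theorem~1.1]{MeyVer1}. Mimicking Step~4 of Lemma~\ref{multiplication3}, one takes $p_2\searrow p$ with $\gamma_2$ forced by $\frac{\gamma_2}{p_2}\nearrow\frac{\gamma}{p}$ whenever this is compatible with $s_2<0$; for $0<s<\frac1p$ (regime \eqref{6019}) and $-\frac{1+\gamma'}{p'}<s\leq 0$ (regime \eqref{6020}) one chooses $\gamma_1 = \frac{\gamma}{p}p_1$, so $\mu = 0$, and reads off the allowed range of $r$ from $s_2<0$; for $\frac1p\leq s<\frac{1+\gamma}{p}$ (regime \eqref{6018}) the matching $\frac{\gamma_2}{p_2}=\frac{\gamma}{p}$ is no longer compatible with $s_2<0$, so one pushes $p_1\searrow 1$ and places $\gamma_1$ at the edge of its admissible interval, which produces the $\varepsilon$-loss $\frac{\mu}{r} = s-\frac1p+\varepsilon$ and the restriction $1<r<\infty$. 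The qualitative contrast with Lemma~\ref{multiplication3} is instructive: since the Fourier supports here are annular, no positivity of $s_1$ is needed in the first reduction (unlike the ball-supported case, where \eqref{6010} forces $s-\frac{1+\gamma}{p}+\frac{1+\gamma_1}{p_1}>0$), whereas the appearance of $S^{k-2}f$ and the use of Lemma~\ref{lem:para1} now forces $s_2<0$ --- which is exactly why the smoothness thresholds $\pm\frac1p$ enter in a manner mirrored relative to the $\Pi_2$-estimate.
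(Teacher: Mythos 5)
Your overall architecture is exactly the paper's: reduce via the annular-support estimate \eqref{6011} and the Jawerth--Franke embedding \eqref{jf_discrete_BF} to an $\ell^p(L^{p(p_1)}(w_{\gamma_1}))$-norm of the blocks, split by H\"older in the last variable, replace $S^{k-2}f$ by $S_kf$ via Lemma \ref{lem:para1} (which is what forces $s_2<0$), and finish with \eqref{jf_discrete_FB}. The treatment of the regimes \eqref{6019} and \eqref{6020} is also correct. However, your parameter choice in the regime \eqref{6018}, i.e.\ $\frac1p\le s<\frac{1+\gamma}{p}$, is broken. You correctly observe that $\frac{\gamma_2}{p_2}=\frac{\gamma}{p}$ is incompatible with $s_2<0$ there, but your proposed fix --- pushing $p_1\searrow 1$ and moving $\gamma_1$ to the edge --- cannot repair it: the failing constraint $s_2=s-\frac{1+\gamma}{p}+\frac{1+\gamma_2}{p_2}<0$ involves only $(p_2,\gamma_2)$, so no choice of $(p_1,\gamma_1)$ restores it. Moreover, in this regime necessarily $\gamma>0$, and then the requirements $\gamma_1\ge\frac{\gamma}{p}p_1$ and $\gamma_1<p_1-1$ force $p_1>\frac{p}{p-\gamma}>1$, so $p_1\searrow1$ is not even admissible; and with $\frac{\gamma_2}{p_2}=\frac{\gamma}{p}$ fixed, the quantity $\frac{\mu}{r}=\frac{\gamma_1}{p_1}-\frac{\gamma_2}{p_2}$ would not come out as $s-\frac1p+\varepsilon$ in the way you assert.

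The correct move (the one the paper makes, and the one that mirrors Substep 4.3 of the $\Pi_2$-estimate on the \emph{other} side) is to keep $p_1\nearrow p$, $\gamma_1=\frac{\gamma}{p}p_1$, and instead send $p_2\nearrow\infty$ while choosing $\gamma_2$ so that $\frac{1+\gamma_2}{p_2}<\frac{1+\gamma}{p}-s$, say $\frac{\gamma_2}{p_2}=\frac{1+\gamma}{p}-s-\varepsilon$; this makes $s_2<0$, satisfies $\frac{\gamma_2}{p_2}\le\frac{\gamma}{p}$ automatically, and yields $\frac{\mu}{r}=\frac{\gamma}{p}-\frac{\gamma_2}{p_2}=s-\frac1p+\varepsilon$ with $r\nearrow p$. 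As written, your proof of the case \eqref{6018} does not go through; the other two cases and the analytic skeleton are fine.
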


\begin{proof}
\emph{Step 1.} As in the previous lemma we assume that $\Pi_3(m,f)$ exists in $\TD(\R^d;Y)$ from the beginning and justify this afterwards by means of Lemma \ref{para2}.

Let $p_1$ and $\gamma_1$ be such that
\begin{equation}\label{Pi3_cond_1}
 1<p_1 < p, \qquad -1 < \gamma_1 < p_1-1, \qquad  \frac{\gamma_1}{p_1}\geq \frac{\gamma}{p}.
\end{equation}
Since the Fourier supports of the summands of $\Pi_3(m,f)$ satisfy \eqref{FsupportPi3}, we may use  \eqref{6011} under the assumption \eqref{6000}, where we can allow for $q= 1$, and then the Jawerth-Franke embedding \eqref{jf_discrete_BF} to obtain
\begin{align*}
\|\Pi_3(m,f) \|_{F_{p,1}^s(\R^d,w_{\gamma};Y)}
&\, \leq C \Big \| \big ( 2^{sk}  S_{k}m S^{k-2} f\big)_{k \geq 2} \Big \|_{L^p(\R^d,w_{\gamma}; \ell^1(Y))}\\
&\, \leq C \Big \| \Big ( 2^{(s- \frac{1+\gamma}{p} +\frac{1+\gamma_1}{p_1})k}  S_{k}m S^{k-2} f \Big)_{k \geq 0} \Big\|_{\ell^p(L^{p(p_1)}(\R^d,w_{\gamma_1};Y))}.
\end{align*}
Now let $p_2$ and $\gamma_2$ satisfy
\begin{equation}\label{Pi3_cond_2}
 p<p_2 < \infty, \qquad -1 < \gamma_2 < p_2-1, \qquad  \frac{\gamma}{p}\geq \frac{\gamma_2}{p_2},\qquad s + \frac{1+\gamma_2}{p_2} - \frac{1+\gamma}{p} < 0,
\end{equation}
and set $w_{\gamma_2}(x',t) = |t|^{\gamma_2}$. Then, by H\"older's inequality,
\begin{align*}
\Big \| \Big ( 2^{(s - \frac{1+\gamma}{p} +\frac{1+\gamma_1}{p_1})k}&\,   S_{k}m S^{k-2}f \Big)_{k \geq 0} \big\|_{\ell^p(L^{p(p_1)}(\R^d,w_{\gamma_1};Y))}\\
&\, \leq   \Big \| \big ( \big \|  2^{\sigma k} S_k m  \big \|_{L^{r}(\R, w_\mu; \calL(X,Y))}\big)_{k\geq 0} \Big \|_{\ell^\infty(L^\infty(\R^{d-1}))} \\
&\, \qquad \qquad \times \Big \| \Big (   2^{(s- \frac{1+\gamma}{p} + \frac{1+\gamma_2}{p_2})n}    S^{n} f\Big)_{n\geq 0}\Big\|_{\ell^p(L^{p(p_2)}(\R^d,w_{\gamma_2};X))},
\end{align*}
where as before $r = \frac{1}{p_1} - \frac{1}{p_2}$, $\mu = (\frac{\gamma_1}{p_1} - \frac{\gamma_2}{p_2}) r$ and $\sigma = \frac{1+\gamma_1}{p_1} - \frac{1+\gamma_2}{p_2}$. Since $s + \frac{1+\gamma_2}{p_2} - \frac{1+\gamma}{p} < 0$ we can apply Lemma \ref{lem:para1} to replace $S^{n}$ by $S_n$ in the second factor, which can then be estimated by $C\|f\|_{F_{p,\infty}^s(\R^d,w;X)}$ in the same way as in the previous lemma using \eqref{jf_discrete_FB}. Also the first factor can be treated in the same way to obtain
$$\Big \| \big ( \big \|  2^{\sigma k} S_k m  \big \|_{L^{r}(\R, w_\mu; \calL(X,Y))}\big)_{k\geq 0} \Big \|_{\ell^\infty(L^\infty(\R^{d-1}))}  = \|m\|_{B_{r,\infty}^{\sigma}(\R,w_\mu;\calL(X,Y))}.$$

\emph{Step 2.} We enlarge $B_{r,\infty}^{\sigma}(\R,w_\mu;\calL(X,Y))$ by choosing optimal parameters according to \eqref{Pi3_cond_1} and \eqref{Pi3_cond_2}. In any case $p_1\nearrow p$ and $\gamma_1 = \frac{\gamma}{p}p_1\in (1,p_1-1)$ satisfies \eqref{Pi3_cond_1} and is the best choice.

\emph{Substep 2.1}  Let $\frac{1+\gamma}{p} - 1 < s \leq 0$ as in \eqref{6020}. Then $p_2\searrow p$ and $\gamma_2 = \frac{\gamma}{p} p_2$ are admissible, which leads to  $\mu = 0$ and that $r$ may be arbitrarily large.

\emph{Substep 2.2}  Let $0< s < \frac{1}{p}$ as in \eqref{6019}. We still take $\gamma_2 = \frac{\gamma}{p} p_2$, but then we have to restrict to $p_2 \nearrow \frac{1}{\frac{1}{p} -s}$. This gives  $\mu = 0$ and $r\nearrow \frac{1}{s}$.

\emph{Substep 2.3}  Let $\frac{1}{p}\leq s < \frac{1+\gamma}{p}$ as in \eqref{6018}. Then we cannot take $\gamma_2 = \frac{\gamma}{p} p_2$, since $s - \frac{1+\gamma}{p} + \frac{1+\gamma_2}{p_2} < 0$ becomes impossible. Instead we let $p_2\nearrow \infty$ and $\gamma_2 \searrow (\frac{1+\gamma}{p} -s)p_2 -1$, which satisfies \eqref{Pi3_cond_2}. Writing $\frac{\gamma_2}{p_2} = \frac{1+\gamma}{p} -s -\varepsilon$ with $\varepsilon>0$, we get $\frac{\mu}{r} = s-\frac{1}{p} + \varepsilon$ and $r\nearrow p$.
\end{proof}

\section{Pointwise multiplication} \label{sec:p-mult}

\subsection{Irregular functions}\label{subsec:irr}

In this section we combine the estimates for the paraproducts to obtain sufficient conditions for the boundedness of
$f\mapsto mf$ for irregular $m$ and vector-valued functions $f$ in Besov spaces, Triebel-Lizorkin spaces and Bessel-potential spaces. The result extends \cite[Theorem 3.4.2]{Franke86} to the weighted vector-valued setting, see also \cite[Corollary 4.6.2/1]{RS96} and \cite[Section 2.8]{Tri83}. In these works also the cases $p,q\leq 1$ are considered.

Recall that the product of distributions is given by $mf = \lim_{l\to\infty} S^l m \cdot S^l f$ (if the limit exists).

\begin{theorem} \label{multiplication-esti}
Let $X$ and $Y$ be  Banach spaces, $p\in (1,\infty)$, $q\in[1,\infty]$,  $\gamma \in (-1,p-1)$ and $- \frac{1+\gamma'}{p'} < s < \frac{1+\gamma}{p}.$ Let the numbers $r$ and $\mu$  satisfy
$$ 1<r< \frac{1}{|s|},\qquad \mu =0,  \qquad \text{ in case }\;- \frac{1}{p'} < s < \frac{1}{p},$$
$$1 < r < p, \qquad \frac{\mu}{r}  = s- \frac{1}{p} + \varepsilon,   \qquad \text{ in case }\;\frac{1}{p}\leq s < \frac{1+\gamma}{p},$$
$$1 < r < p', \qquad \frac{\mu}{r}  = - s- \frac{1}{p'} + \varepsilon, \qquad  \text{ in case }\;-\frac{1+\gamma'}{p'} < s \leq -\frac{1}{p'},$$
for some $\varepsilon > 0$. Let $m \in B_{r,\infty}^{\frac{1+\mu}{r}}(\R, w_\mu; \calL(X,Y))\cap L^\infty(\R;\calL(X,Y))$ and consider it as a distribution on $\R^d$ which only depends on the last coordinate. Then the following holds true.
\begin{itemize}
 \item[\text{(a)}] For $\mathcal A\in \{F,B\}$ and $f\in \mathcal A_{p,q}^s(\R^d,w_{\gamma};X)$ the product $mf$ exists in $\TD(\R^d;Y)$ and \begin{equation*}
\|mf\|_{\mathcal A_{p,q}^s(\R^d,w_{\gamma};Y)} \leq C \big(\|m\|_{B_{r,\infty}^{\frac{1+\mu}{r}}(\R, w_\mu; \calL(X,Y))} + \|m\|_{\infty}  \big)\|f\|_{\mathcal A_{p,q}^s(\R^d,w_{\gamma};X)}.
\end{equation*}
 \item[\text{(b)}] If $X$ and $Y$ have UMD and if the image of $m$ is $\mathcal R$-bounded by $\mathcal R(m)$, then for all $f\in H^{s,p}(\R^d,w_{\gamma};X)$ the product $mf$ exists in $\TD(\R^d;Y)$ and
 \begin{equation*}
\|mf\|_{H^{s,p}(\R^d,w_{\gamma};Y)} \leq C \big( \|m\|_{B_{r,\infty}^{\frac{1+\mu}{r}}(\R, w_\mu; \calL(X,Y))} + \mathcal R(m)\big)\|f\|_{H^{s,p}(\R^d,w_{\gamma};X)}.
\end{equation*}
\end{itemize}
\end{theorem}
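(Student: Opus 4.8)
The plan is to run the paraproduct decomposition
$mf = \Pi_1(m,f) + \Pi_2(m,f) + \Pi_3(m,f)$
from Section~\ref{sec:paraproducts} and to estimate the three pieces separately using the results already established: $\Pi_1$ by Lemma~\ref{multiplication1} (for $H$, exploiting UMD) resp.\ Lemma~\ref{multiplication2} (for $F$), and $\Pi_2,\Pi_3$ by Lemmas~\ref{multiplication3} and~\ref{multiplication4}. Each of the three summands will be shown to converge in $\TD(\R^d;Y)$; by the discussion in Section~\ref{sec:paraproducts} this already gives that $mf$ exists in $\TD(\R^d;Y)$ and equals the sum of the paraproducts, so only the norm estimates remain.

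For $\Pi_1(m,f)$, where the $m$-factors carry the high frequencies, the summands have Fourier support in dyadic annuli by~\eqref{FsupportPi3}. In the Triebel--Lizorkin case Lemma~\ref{multiplication2} gives $\|\Pi_1(m,f)\|_{F^s_{p,q}(\R^d,w_\gamma;Y)}\le C\|m\|_\infty\|f\|_{F^s_{p,q}(\R^d,w_\gamma;X)}$; for $B$-spaces the corresponding bound $\|\Pi_1(m,f)\|_{B^s_{p,q}(\R^d,w_\gamma;Y)}\le C\|m\|_\infty\|f\|_{B^s_{p,q}(\R^d,w_\gamma;X)}$ follows in the same elementary way (annulus support, Young's inequality and Lemma~\ref{para2}, which require neither UMD nor a restriction on $s$). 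For the Bessel-potential case~(b) we instead invoke Lemma~\ref{multiplication1}, which needs $X,Y$ to have UMD and yields the bound $C\,\mathcal R(m)\,\|f\|_{H^{s,p}(\R^d,w_\gamma;X)}$.

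For $\Pi_2$ and $\Pi_3$ we apply Lemmas~\ref{multiplication3} and~\ref{multiplication4} (and their $B$-space analogues, proved by the same argument with the Jawerth--Franke steps replaced by ordinary Sobolev embeddings in the Besov scale, so that the microscopic parameter stays fixed). The point is that the pair $(r,\mu)$ prescribed in the present theorem is admissible \emph{simultaneously} for both lemmas. For $-\tfrac1{p'}<s<\tfrac1p$ the choice $1<r<\tfrac1{|s|}$, $\mu=0$ is literally one of the cases~\eqref{6013}/\eqref{6014} of Lemma~\ref{multiplication3} and~\eqref{6019}/\eqref{6020} of Lemma~\ref{multiplication4}. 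For $\tfrac1p\le s<\tfrac{1+\gamma}{p}$ the choice $1<r<p$, $\tfrac\mu r=s-\tfrac1p+\varepsilon$ is case~\eqref{6018}, so Lemma~\ref{multiplication4} applies to $\Pi_3$ directly; for $\Pi_2$ one first passes, via the weighted Sobolev embedding $B^{(1+\mu)/r}_{r,\infty}(\R,w_\mu;\calL(X,Y))\hookrightarrow B^{1/r}_{r,\infty}(\R;\calL(X,Y))$ from \cite[Theorem~1.1]{MeyVer1}, to a space with $\mu=0$, which falls under case~\eqref{6013}. The remaining range $-\tfrac{1+\gamma'}{p'}<s\le-\tfrac1{p'}$ is treated symmetrically, now with $\Pi_2$ handled by~\eqref{6015} and $\Pi_3$ by~\eqref{6020} after the same embedding. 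In every case one obtains, for $i=2,3$,
\[
\|\Pi_i(m,f)\|_{F^s_{p,1}(\R^d,w_\gamma;Y)}\le C\,\|m\|_{B^{(1+\mu)/r}_{r,\infty}(\R,w_\mu;\calL(X,Y))}\,\|f\|_{F^s_{p,\infty}(\R^d,w_\gamma;X)},
\]
and likewise with $F$ replaced by $B$ and the indices $1,\infty$ both replaced by $q$.

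It then remains to absorb the microscopic smoothing. For $\mathcal A=F$ use $F^s_{p,1}\hookrightarrow F^s_{p,q}\hookrightarrow F^s_{p,\infty}$ from~\eqref{eq:monotony}, and for $\mathcal A=H$ use $F^s_{p,1}\hookrightarrow H^{s,p}\hookrightarrow F^s_{p,\infty}$ from~\eqref{eq:TriebelLizorkinH}; for $\mathcal A=B$ the $B$-analogues are already diagonal in $q$. Adding the three contributions and bounding $\|m\|_\infty$ (resp.\ $\mathcal R(m)$) and $\|m\|_{B^{(1+\mu)/r}_{r,\infty}(\R,w_\mu;\calL(X,Y))}$ by the right-hand side of the claimed inequality yields~(a) and~(b). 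I expect the main obstacle to be precisely the parameter bookkeeping of the third paragraph: checking in each of the three $s$-ranges that one pair $(r,\mu)$, together with the auxiliary Sobolev embedding for $m$, meets the hypotheses of \emph{both} paraproduct lemmas, and observing that the sharpness of the range $-\tfrac{1+\gamma'}{p'}<s<\tfrac{1+\gamma}{p}$ (which forces $r<p$ near the upper end and $r<p'$ near the lower end) is exactly what keeps these constraints compatible. The genuinely analytic content has already been carried out in Lemmas~\ref{multiplication1}--\ref{multiplication4} and in Appendix~\ref{sec:analytic}.
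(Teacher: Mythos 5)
Your proposal is correct and follows the paper's own route: decompose $mf$ into the three paraproducts, estimate $\Pi_1$ by Lemma \ref{multiplication1} (for $H$, using UMD) resp.\ Lemma \ref{multiplication2} (for $F$), estimate $\Pi_2,\Pi_3$ by Lemmas \ref{multiplication3} and \ref{multiplication4}, and absorb the microscopic smoothing via \eqref{eq:monotony} resp.\ \eqref{eq:TriebelLizorkinH}. The only deviation is the Besov case, which the paper obtains from the $F$-case by real interpolation (\cite[Proposition 6.1]{MeyVer1}) instead of proving $B$-analogues of Lemmas \ref{multiplication3} and \ref{multiplication4}; your route is viable but costs extra lemmas. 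Your explicit use of the embedding $B^{(1+\mu)/r}_{r,\infty}(\R,w_\mu)\hookrightarrow B^{1/r}_{r,\infty}(\R)$ from \cite[Theorem 1.1]{MeyVer1} to reconcile the theorem's $(r,\mu)$ with the case distinctions \eqref{6013}--\eqref{6020} of the two paraproduct lemmas is a genuine detail that the paper's one-line proof leaves implicit, and it is worth having spelled out.
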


\begin{proof}
Consider Assertion (a) for $\mathcal A = F$. The paraproducts exist in $\TD(\R^d;Y)$ by the Lemmas \ref{multiplication2}, \ref{multiplication3} and \ref{multiplication4}, hence $mf$ exists as a distribution. The estimate follows from the monotonicity of the $F$-spaces with respect to $q\in [1,\infty]$.  For  $\mathcal A = B$,  the estimate is now a consequence of real interpolation, see \cite[Proposition 6.1]{MeyVer1}. Assertion (b) follows from the Lemmas  \ref{multiplication1}, \ref{multiplication3} and \ref{multiplication4}, combined with the elementary embeddings \eqref{eq:TriebelLizorkinH}.
\end{proof}

\begin{remark} \label{R-bound-crit}\
\begin{enumerate}[(i)]
\item If $m$ is scalar-valued, then $\mathcal R(m) \leq 2 \|m\|_{L^\infty(\R^d)}$ by \cite[Proposition 2.5]{KuWe}. However, also in this case our methods do not allow to remove the UMD property of the underlying Banach spaces. If $X,Y$ are Hilbert spaces, then a family of linear operators is $\mathcal R$-bounded if and only if it is bounded, see e.g. \cite[Remark 3.2]{DHP}.
\item A scaling argument shows that an  estimate as above is only possible with a space $B_{r,\infty}^\sigma(\R,w_\mu)$ satisfying $\sigma = \frac{1+\mu}{r}$. The conditions on the parameters cannot be improved by duality arguments in case of reflexive spaces.
\item Since $ B_{r,\infty}^{\frac{1}{r}}$ is not embedded into $L^\infty$, in the sharp case the boundedness of $m$ does not follow from the Besov regularity and must be imposed as an extra condition. Sufficient conditions for the  $\mathcal R$-boundedness of the image of $m$ in terms of Besov regularity are provided in \cite[Theorem 5.1]{HytVer}. In particular, if $m\in B_{r,1}^{1/r}(\R; \mathscr L(X,Y))$ for some $r$ which depends on type and cotype of $X$ and $Y$ (see Section \ref{sec:type}), then the image of $m$ is automatically $\mathcal R$-bounded.

\end{enumerate}
\end{remark}

Analogous arguments yield multiplication estimates for radial power weights of the form
$$v_\gamma(x) = |x|^\gamma.$$

\begin{theorem} \label{multiplication-esti-d} Let $X$ and $Y$ be  Banach spaces, $p\in (1,\infty)$, $q\in[1,\infty]$, $\gamma\in (-d,d(p-1))$ and $- \frac{d+\gamma'}{p'} < s < \frac{d+\gamma}{p}$. Let the numbers $r$ and $\mu$ satisfy
$$ 1<r< \frac{d}{|s|},\qquad \mu =0, \quad  \qquad \text{ in case}\;- \frac{d}{p'} < s < \frac{d}{p},$$
$$1 < r < p, \qquad \frac{\mu}{r}  = s- \frac{d}{p} + \varepsilon, \qquad \text{ in case }\;\frac{d}{p}\leq s < \frac{d+\gamma}{p},$$
$$1 < r < p', \qquad \frac{\mu}{r}  = - s- \frac{d}{p'} + \varepsilon,   \qquad \text{ in case }\;- \frac{d+\gamma'}{p'} < s \leq -\frac{d}{p'},$$
for some $\varepsilon > 0$. Suppose that $m \in B_{r,\infty}^{\frac{d+\mu}{r}}(\R^d, v_\mu; \calL(X,Y))\cap L^\infty(\R^d;\calL(X,Y))$. Then for $\mathcal A\in \{F,B\}$ and $f\in \mathcal A_{p,q}^s(\R^d,v_{\gamma};X)$ the product $mf$ exists in $\TD(\R^d;Y)$ and
$$
\|mf\|_{\mathcal A_{p,q}^s(\R^d,v_\gamma;Y)} \leq C \big(\|m\|_{B_{r,\infty}^{\frac{d+\mu}{r}}(\R^d, v_\mu; \calL(X,Y))} + \|m\|_{\infty} \big)\|f\|_{\mathcal A_{p,q}^s(\R^d,v_\gamma;X)}.
$$
Moreover, if $X$ and $Y$ have UMD and if the image of $m$ is $\mathcal R$-bounded by $\mathcal R(m)$, then for all $f\in H^{s,p}(\R^d,v_\gamma;X)$ the product $mf$ exists in $\TD(\R^d;Y)$ and
$$
\|mf\|_{H^{s,p}(\R^d,v_\gamma;Y)} \leq C \big(\|m\|_{B_{r,\infty}^{\frac{d+\mu}{r}}(\R^d, v_\mu; \calL(X,Y))} + \mathcal R(m)  \big)\|f\|_{H^{s,p}(\R^d,v_\gamma;X)}.
$$
\end{theorem}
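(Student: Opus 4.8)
The plan is to run the paraproduct argument behind Theorem~\ref{multiplication-esti} essentially verbatim, replacing the one-dimensional weight $w_\mu(x',t)=|t|^\mu$ by the radial weight $v_\mu(x)=|x|^\mu$ and the smoothness index $\tfrac{1+\mu}{r}$ of the symbol class by $\tfrac{d+\mu}{r}$, which is the correct dimensional rescaling. First I would fix $(\varphi_k)_{k\ge 0}\in\Phi(\R^d)$ and decompose $mf=\Pi_1(m,f)+\Pi_2(m,f)+\Pi_3(m,f)$ as in Section~\ref{sec:paraproducts}. Since $v_\gamma\in A_p(\R^d)$ precisely for $\gamma\in(-d,d(p-1))$, the estimate of $\Pi_1$ carries over without change: Lemma~\ref{multiplication2} gives $\|\Pi_1(m,f)\|_{F^s_{p,q}}\le C\|m\|_\infty\|f\|_{F^s_{p,q}}$ (and the $B$-case by real interpolation), while Lemma~\ref{multiplication1} gives $\|\Pi_1(m,f)\|_{H^{s,p}}\le C\,\mathcal R(m)\|f\|_{H^{s,p}}$ under the UMD hypothesis; neither lemma uses more than membership of the weight in $A_p$, resp.\ $A_\infty$.

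The substance is to prove the radial analogues of Lemmas~\ref{multiplication3} and~\ref{multiplication4} for $\Pi_2$ and $\Pi_3$. Those lemmas factor $\|\cdot\|_{L^p(\R^d,w_\gamma)}$ through the mixed norm $L^p(\R^{d-1};L^{p_1}(\R,w_{\gamma_1}))$ only because both $m$ and $w_\gamma$ live on the last variable; here $m\in B^{(d+\mu)/r}_{r,\infty}(\R^d,v_\mu;\calL(X,Y))$ and $v_\gamma$ are genuine objects on $\R^d$, so one works throughout with the ordinary weighted spaces $L^{p_i}(\R^d,v_{\gamma_i};X)$, $v_{\gamma_i}\in A_{p_i}(\R^d)$. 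Concretely, I would apply the weighted Jawerth--Franke embedding \eqref{jf_discrete_BF} to pass from $F^s_{p,1}(\R^d,v_\gamma;Y)$ to an $\ell^p$-sum of $L^{p_1}(\R^d,v_{\gamma_1};Y)$-norms, use \eqref{6010}, resp.\ \eqref{6011} (and Lemma~\ref{lem:para1} in the $\Pi_3$-case) to move $S_n$ back to $S_k$, resp.\ $S^{k-2}$, and then apply H\"older's inequality on all of $\R^d$ with $\tfrac1r=\tfrac1{p_1}-\tfrac1{p_2}$, $\tfrac\mu r=\tfrac{\gamma_1}{p_1}-\tfrac{\gamma_2}{p_2}$, splitting off $m$ in the $\ell^\infty(L^r(\R^d,v_\mu;\calL(X,Y)))$-norm and $f$ in a Jawerth--Franke-type space that by \eqref{jf_discrete_FB} is dominated by $\|f\|_{F^s_{p,\infty}(\R^d,v_\gamma;X)}$. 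The $m$-factor then reassembles into $\|m\|_{B^\sigma_{r,\infty}(\R^d,v_\mu;\calL(X,Y))}$ with $\sigma=\tfrac{d+\mu}{r}=\tfrac{d+\gamma_1}{p_1}-\tfrac{d+\gamma_2}{p_2}$, which is exactly the space in the statement; existence of the paraproducts in $\TD(\R^d;Y)$ follows from the convergence criterion Lemma~\ref{para2}.

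Finally I would optimize the auxiliary parameters as in Substeps 4.1--4.3 and 2.1--2.3, now with every occurrence of $\tfrac1p$, $\tfrac1{p'}$, $\tfrac{1+\gamma}{p}$ replaced by $\tfrac dp$, $\tfrac d{p'}$, $\tfrac{d+\gamma}{p}$. Taking $p_1\nearrow p$, $p_2\searrow p$ with $\gamma_i=\tfrac\gamma p p_i$ is admissible and optimal whenever the sign conditions $s-\tfrac{d+\gamma}{p}+\tfrac{d+\gamma_1}{p_1}>0$ (for $\Pi_2$) and $s-\tfrac{d+\gamma}{p}+\tfrac{d+\gamma_2}{p_2}<0$ (for $\Pi_3$) permit, giving $\mu=0$ and $r<\tfrac d{|s|}$ in the range $-\tfrac d{p'}<s<\tfrac dp$; for $s\le-\tfrac d{p'}$ one is forced to $p_1\searrow1$, $\gamma_1\searrow(\tfrac{d+\gamma}{p}-s)p_1-1$, giving $r<p'$ and $\tfrac\mu r=-s-\tfrac d{p'}+\varepsilon$, and for $s\ge\tfrac dp$ to $p_2\nearrow\infty$, $\gamma_2\searrow(\tfrac{d+\gamma}{p}-s)p_2-1$, giving $r<p$ and $\tfrac\mu r=s-\tfrac dp+\varepsilon$; optimality is dictated by the sharp weighted Sobolev embeddings of \cite[Theorem~1.1]{MeyVer1} in dimension $d$. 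Combining the three paraproducts then yields the $F$-estimate by monotonicity in $q$, the $B$-estimate by real interpolation in $q$ (\cite[Proposition~6.1]{MeyVer1}), and the $H$-estimate by the embeddings \eqref{eq:TriebelLizorkinH}. The main obstacle I anticipate is the bookkeeping: one must check that the Jawerth--Franke embeddings and the estimates for entire analytic functions from Appendix~\ref{sec:analytic} are available for general $A_\infty$-weights on $\R^d$ in the plain, non-anisotropic form needed here, and that in each regime of $s$ the parameters $p_1,\gamma_1,p_2,\gamma_2$ can be chosen to satisfy all of $-d<\gamma_i<d(p_i-1)$, $\tfrac{\gamma_1}{p_1}\ge\tfrac\gamma p\ge\tfrac{\gamma_2}{p_2}$ and the sign conditions simultaneously.
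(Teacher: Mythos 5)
Your proposal is correct and follows essentially the same route as the paper's (very condensed) proof: decompose into paraproducts, treat $\Pi_1$ by Lemmas \ref{multiplication1} and \ref{multiplication2} unchanged, and rerun the proofs of Lemmas \ref{multiplication3} and \ref{multiplication4} with the mixed-norm machinery replaced by ordinary weighted spaces on $\R^d$, using the radial-weight Jawerth--Franke embeddings of \cite[Theorem 6.4]{MeyVer1} and \eqref{6011} in place of \eqref{6005}, followed by the analogous parameter optimization with $\tfrac1p,\tfrac1{p'},\tfrac{1+\gamma}{p}$ replaced by $\tfrac dp,\tfrac d{p'},\tfrac{d+\gamma}{p}$. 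The only slip is in the endpoint regimes, where the choices should read $\gamma_1\searrow(\tfrac{d+\gamma}{p}-s)p_1-d$ and $\gamma_2\searrow(\tfrac{d+\gamma}{p}-s)p_2-d$ (with $-d$ rather than $-1$), consistent with $\tfrac{d+\gamma_i}{p_i}\to\tfrac{d+\gamma}{p}-s$.
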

\begin{proof} As before one decomposes $mf$ into the paraproducts. For the estimate of $\Pi_1(m,f)$ one can apply the Lemmas \ref{multiplication1} and \ref{multiplication2}. To estimate $\Pi_2(m,f)$ one argues as in Lemma \ref{multiplication3}. Instead of \eqref{jf_discrete_BF} and \eqref{jf_discrete_FB} one directly uses the Jawerth-Franke embeddings from \cite[Theorem 6.4]{MeyVer1} for radial weights. One further uses \eqref{6011} instead of \eqref{6005}. The optimal choice of the parameters is analogous. In a similar way one modifies the proof of Lemma \ref{multiplication4} to estimate $\Pi_3(m,f)$.
\end{proof}

\subsection{H\"older continuous functions}
In this section we investigate the boundedness of $f\mapsto m f$ for smooth and bounded functions $m$ on $B$- and $F$-spaces. The case of $H$-spaces was already considered in Proposition \ref{thm:mult-smooth2}. Together with Theorem \ref{multiplication-esti} this provides the right ingredients to prove the Theorems \ref{thm:1} and \ref{thm:2} later on.

\begin{proposition} \label{prop:mult-smooth}
Let $X$ and $Y$ be Banach spaces, $s\in \R$, $p\in (1,\infty)$, $q\in [1,\infty]$ and $w\in A_p$.
Assume that $m\in BC^\sigma(\R^d; \calL(X,Y))$ for some $\sigma>|s|$.
Then for $\mathcal A\in \{F, B\}$ and all $f\in \mathcal A_{p,q}^s(\R^d,w;X)$ the product $mf$ exists in $\TD(\R^d;Y)$ and
$$\|mf\|_{\mathcal A_{p,q}^s(\R^d,w;Y)} \leq C \|m\|_{BC^\sigma(\R^d; \calL(X,Y))}\|f\|_{\mathcal A_{p,q}^s(\R^d,w;X)}.$$
\end{proposition}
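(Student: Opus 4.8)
The plan is to reduce the statement for general smoothness $s\in\R$ to the case of positive integer smoothness, where Leibniz' formula gives the estimate directly, and then fill in the noninteger and negative ranges by interpolation and duality — exactly in parallel with the proof of Proposition \ref{thm:mult-smooth2}. First I would treat $s\in\N_0$: since multiplication by a $BC^\sigma$ function with $\sigma>s$ in particular lies in $BC^m$ for every integer $0\le m\le s$, the Leibniz rule expands $D^\alpha(mf)$ as a sum of terms $\binom{\alpha}{\beta}D^\beta m\,D^{\alpha-\beta}f$, each of which is controlled in $L^p(\R^d,w;Y)$ by $\|m\|_{BC^{|\beta|}}\|D^{\alpha-\beta}f\|_{L^p(\R^d,w;X)}$; summing over $\alpha$ with $|\alpha|\le m$ and using that $F^m_{p,q}$ and $B^m_{p,q}$ are continuously contained in appropriate scales near $W^{m,p}$ handles the $F$-case for integer $s$. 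Actually, cleaner is to invoke the known characterization of $F^s_{p,q}$ and $B^s_{p,q}$ for $s>0$ by difference norms (Proposition \ref{prop:Lpsmoothness-Besov} for $B$, and its $F$-analogue \cite[Proposition 2.3]{MeyVer2}): using $\Delta^m_h(mf)$ and a discrete Leibniz-type identity for higher-order differences, $\Delta^m_h(mf)$ is a sum over $l$ of products of $\Delta^l_h m$-type terms (bounded by $|h|^{\min(l,\sigma)}\|m\|_{BC^\sigma}$) with translates of $\Delta^{m-l}_h f$, and one estimates the resulting modulus-of-smoothness integral termwise. This gives (a) for $\mathcal A\in\{F,B\}$ and all $s>0$ in one stroke, avoiding a separate interpolation step for noninteger $s$.

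For $s=0$ the space $F^0_{p,q}$ and $B^0_{p,q}$ are not covered by positive-smoothness difference norms, but here one can either note $B^0_{p,q}\hookrightarrow$ something and $L^p$-boundedness is immediate, or — more uniformly — simply observe that multiplication by $m\in L^\infty(\R^d;\calL(X,Y))$ (which follows from $BC^\sigma$, $\sigma\ge 0$) is bounded on $L^p(\R^d,w;X)=F^0_{p,2}$... no, that identity fails in general; instead the $s=0$ case is most safely obtained by interpolation between a small positive $s$ and a small negative $s$, once the negative range is established. So the last remaining range is $s<0$, handled by duality: for $f\in\mathcal A^s_{p,q}(\R^d,w;X)$ and a test function $g$, write $\langle mf,g\rangle=\langle f,m^*g\rangle$, bound this by $\|f\|_{\mathcal A^s_{p,q}(\R^d,w;X)}\|m^*g\|_{\mathcal A^{-s}_{p',q'}(\R^d,w';X^*)}$ using the known duality $\bigl(\mathcal A^s_{p,q}(\R^d,w;X)\bigr)^*=\mathcal A^{-s}_{p',q'}(\R^d,w';X^*)$ (in the reflexive range — which is where $B,F$-space duality of this type is available), apply the already-proven positive-smoothness case to $m^*\in BC^\sigma(\R^d;\calL(Y^*,X^*))$ (note $\|m^*\|_{BC^\sigma}=\|m\|_{BC^\sigma}$ pointwise in the operator norm), and take the supremum over $g$ in the unit ball. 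The definition of $mf$ for $s<0$ as the density-extension of the pointwise product on smooth functions makes this duality computation legitimate, since it suffices to verify the bound on Schwartz functions first.

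The main obstacle is the endpoint microscopic parameters $q=1$ and $q=\infty$, and non-reflexive underlying spaces $X$, for which the duality argument in the $s<0$ range breaks down — exactly as the authors flag in the remark after Proposition \ref{thm:mult-smooth2} ("For reflexive spaces the case $s<0$ can be obtained by duality under restrictions on the parameters $p$ and $q$"). To cover $F^s_{p,1}$, $F^s_{p,\infty}$ and general Banach $X$ with negative $s$ without duality, the right tool is the paraproduct decomposition of Section \ref{sec:paraproducts}: $\Pi_1(m,f)$ is handled by Lemma \ref{multiplication2} since $m\in L^\infty$, while $\Pi_2$ and $\Pi_3$ now have a \emph{smooth} factor $m$, so instead of the Jawerth–Franke machinery one can use the much simpler fact that for $m\in BC^\sigma$ the pieces $S_km$ satisfy $\|S_k m\|_\infty\le C2^{-k\sigma}\|m\|_{BC^\sigma}$ (a standard Bernstein/Taylor estimate), which together with $\sigma>|s|$ makes the relevant series of dyadic blocks summable in the $F^s_{p,q}(\R^d,w;Y)$-norm for every $q\in[1,\infty]$ and every $w\in A_p$; the $B$-case follows identically or by \eqref{eq:scaleBF}. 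I would state the proof by first doing the easy positive-$s$ difference-norm argument, then remarking that to obtain the full range $s\in\R$ including the $F$-endpoints one applies the paraproduct estimates of Section \ref{sec:Point} with the trivial high-frequency decay estimate for $S_km$ in place of Lemmas \ref{multiplication3}–\ref{multiplication4}, citing \cite[Section 4.7]{RS96} or \cite[Section 2.8.2]{Tri83} for the scalar prototype of this computation.
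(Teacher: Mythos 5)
Your final paragraph is exactly the paper's proof: decompose into paraproducts, handle $\Pi_1$ by Lemma \ref{multiplication2}, and handle $\Pi_2$, $\Pi_3$ using $\|2^{\sigma k}S_km\|_\infty\leq C\|m\|_{BC^\sigma}$ (i.e.\ $BC^\sigma\hookrightarrow B^\sigma_{\infty,\infty}$) together with the embeddings $B^{s+\sigma}_{p,\infty}\hookrightarrow\mathcal A^s_{p,q}$ resp.\ $B^s_{p,1}\hookrightarrow\mathcal A^s_{p,q}$, Lemma \ref{lem:para1} for $S^{k-2}f$, and the convergence criteria of Lemma \ref{para2} — all of which works precisely because $\sigma>|s|$ gives $s+\sigma>0$ and $s-\sigma<0$. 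The preliminary difference-norm and duality routes you sketch (and correctly discard for the endpoint $q$'s and non-reflexive $X$) are not needed; the paraproduct argument covers all $s\in\R$ and $q\in[1,\infty]$ in one pass.
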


\begin{proof} By Lemma \ref{multiplication2} one has
$$\|\Pi_1(m,f)\|_{F_{p,q}^s(\R^d,w;Y)} \leq C \|m\|_{L^\infty(\R^d;\calL(X,Y))}\|f\|_{F_{p,q}^s(\R^d,w;X)}.$$
The $B$-case follows from real interpolation (see \cite[Proposition 5.1]{MeyVer1}). To estimate $\Pi_2(m,f)$ we use  $B_{p,\infty}^{s+\sigma}\hookrightarrow \mathcal A_{p,q}^s$ and that $s+\sigma >0$ to apply \eqref{6012} under the assumption \eqref{6001}, which gives
\begin{align*}
\|\Pi_2(m,f)\|_{\mathcal A_{p,q}^s(\R^d,w;Y)} &\, \leq C \|\Pi_2(m,f)\|_{B_{p,\infty}^{s+\sigma}(\R^d,w;Y)} \\
&\, \leq C \sum_{j=-1}^1\big\|\big(2^{(s+\sigma)k} S_{k+j}m S_k f)_{k\geq 0}\big\|_{\ell^\infty(L^p(\R^d,w;Y))}.
\end{align*}
Then for fixed $j$ we obtain
\begin{align*}
\big\|\big(2^{(s+\sigma)k} S_{k+j}&\,m S_k f)_{k\geq 0}\big\|_{\ell^\infty(L^p(\R^d,w;Y))}\\
&\, \leq C \big\|\big(2^{\sigma k} S_{k+j}m)_{k\geq 0}\big\|_{\ell^\infty(L^\infty(\R^d;\calL(X,Y)))} \big\|\big(2^{sk} S_k f)_{k\geq 0}\big\|_{\ell^\infty(L^p(\R^d,w;X))}\\
&\, \leq C \|m\|_{B_{\infty,\infty}^{\sigma}(\R^d; \calL(X,Y))}\|f\|_{B_{p,\infty}^s(\R^d,w;X)}\\
&\, \leq C \|m\|_{BC^{\sigma}(\R^d; \calL(X,Y))}\|f\|_{\mathcal A_{p,q}^s(\R^d,w;X)}.
\end{align*}
In the last line we have used that $BC^{\sigma} \hookrightarrow B_{\infty,\infty}^{\sigma}$, see \cite[Proposition 2.5.7]{Tri83} for the scalar case, and $\mathcal A_{p,q}^s\hookrightarrow B_{p,\infty}^s$. For $\Pi_3(m,f)$ we use $B_{p,1}^s \hookrightarrow  \mathcal A_{p,q}^s$ and apply \eqref{6012} under the assumption \eqref{6000} to get
\begin{align*}
\|\Pi_3(m,f)\|_{\mathcal A_{p,q}^s(\R^d,w;Y)} &\, \leq C \|\Pi_3(m,f)\|_{B_{p,1}^s(\R^d,w;Y)} \leq C \big\|\big(2^{sk} S_{k}m S^{k-2}f)_{k\geq 0}\big\|_{\ell^1(L^p(\R^d,w;Y))}\\
&\, \leq C \big\|\big(2^{\sigma k} S_km)_{k\geq 0}\big\|_{\ell^\infty(L^\infty(\R^d;\calL(X,Y)))} \big\|\big(2^{(s-\sigma)k} S^{k} f)_{k\geq 0}\big\|_{\ell^1(L^p(\R^d,w;X))}\\
&\, \leq C  \|m\|_{BC^{\sigma}(\R^d; \calL(X,Y))} \|\big(2^{(s-\sigma)k} S_{k} f)_{k\geq 0}\|_{\ell^1(L^p(\R^d,w;X))}\\
&\, \leq C \|m\|_{BC^{\sigma}(\R^d; \calL(X,Y))} \|f\|_{\mathcal A_{p,q}^s(\R^d,w;X)}.
\end{align*}
Here we also employed that $s-\sigma <0$ and applied Lemma \ref{lem:para1} to replace $S^k$ by $S_k$ in the second to last line. The existence of the paraproducts and thus of $mf$ is a consequence of these estimates and Lemma \ref{para2}.
\end{proof}

\subsection{Characteristic functions\label{subs:multich}}
It is well-known that the precise local regularity of $\one_{\R_+^d}$ is $B_{r,\infty}^{\frac{1}{r}}$, see  \cite[Lemma 4.6.3/2]{RS96} and the references therein. This information is actually not sufficient to apply Theorem \ref{multiplication-esti} in case $s\notin (- \frac{1}{p'}, \frac{1}{p})$.

\begin{lemma} \label{chi} For all $\phi\in C_c^\infty(\R^d)$, $p\in (1,\infty)$ and $\gamma \in (-1, p-1)$ one has
$$\one_{\R_+^d} \phi  \in B_{p,\infty}^{\frac{1+\gamma}{p}}(\R^d, w_{\gamma})\cap L^\infty(\R^d).$$
\end{lemma}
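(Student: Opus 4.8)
The plan is to reduce the claim to a one-dimensional statement and then verify it by hand using the difference-norm characterization of weighted Besov spaces. Since $\phi \in C_c^\infty(\R^d)$, the product $\one_{\R_+^d}\phi$ is bounded with compact support, so the $L^\infty$-membership is trivial and only the Besov bound requires work. First I would observe that the weight $w_\gamma$ and the cutting at the hyperplane only see the last variable $t$, so it is natural to use the mixed-norm / tensor structure: freezing $x'\in \R^{d-1}$, the function $t\mapsto \one_{t>0}\phi(x',t)$ is, up to a smooth compactly supported factor, the Heaviside function on $\R$, which is known (see \cite[Lemma 4.6.3/2]{RS96}) to lie in $B_{r,\infty}^{1/r}(\R)$ in the \emph{unweighted} scalar case. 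The point of the lemma is to get the weighted exponent $\tfrac{1+\gamma}{p}$ in place of $\tfrac1p$, reflecting that the weight $|t|^\gamma$ makes the jump ``less singular'' when $\gamma>0$.

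The concrete route I would take is via Proposition \ref{prop:Lpsmoothness-Besov} (the difference-norm characterization), with $s = \tfrac{1+\gamma}{p}>0$, $q=\infty$, and $m=1$ (first-order differences, since $0<s<1$ as $\gamma<p-1$). Thus it suffices to bound
\[
\sup_{t>0} t^{-s}\, \Big\| t^{-d}\!\int_{|h|\le t} \|\Delta_h^1(\one_{\R_+^d}\phi)\|\, dh\Big\|_{L^p(\R^d,w_\gamma)}.
\]
Splitting $\Delta_h^1(\one_{\R_+^d}\phi)(x) = \one_{\R_+^d}(x+h)(\phi(x+h)-\phi(x)) + (\one_{\R_+^d}(x+h)-\one_{\R_+^d}(x))\phi(x)$, the first term is controlled by $|h|\cdot\|\nabla\phi\|_\infty\,\one_{\supp\phi + B_t}$, which contributes $O(t)$ in sup and a bounded $L^p(w_\gamma)$-norm, hence is $O(t^{1-s})$ and harmless as $t\to 0$; for $t\to\infty$ one uses that everything has compact support. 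The essential term is the jump term: $\one_{\R_+^d}(x+h)-\one_{\R_+^d}(x)$ is supported, for fixed $h=(h',h_d)$, on the slab of $x$ with last coordinate $t$-coordinate $\tau$ between $0$ and $-h_d$, i.e. $|\tau|\le |h_d|\le |h|\le t$. So the inner average over $|h|\le t$ of $\|\one_{\R_+^d}(\cdot+h)-\one_{\R_+^d}(\cdot)\|$ is pointwise bounded by $C\,\one_{|\tau|\le t}$ (the fraction of $h$'s that flip the sign is $\lesssim |\tau|/t \le 1$ in the worst direction, but uniformly bounded by $1$), times the indicator of a bounded $x'$-set from $\phi$. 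Taking the $L^p(w_\gamma)$-norm of $\one_{|\tau|\le t}$ gives $\big(\int_{|\tau|\le t}|\tau|^\gamma d\tau\big)^{1/p} = C\, t^{(1+\gamma)/p} = C t^s$, which exactly cancels the $t^{-s}$ prefactor. (One must be slightly careful: the inner integral has an extra $t^{-d}\int_{|h|\le t}dh = O(1)$ normalization and the $x'$-integration is over a fixed compact set, so no further powers of $t$ appear.) Hence the supremum is finite and the difference norm is controlled, giving $\one_{\R_+^d}\phi\in B_{p,\infty}^{s}(\R^d,w_\gamma)$.

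The main obstacle I anticipate is bookkeeping the interaction of the $h'$-directions (which do not cause jumps) with the $h_d$-direction (which does), and making sure the crude bound $\|\one_{\R_+^d}(\cdot+h)-\one_{\R_+^d}(\cdot)\|\le \one_{|\tau|\le|h_d|}$ together with the normalized average over $|h|\le t$ really produces $\one_{|\tau|\le t}$ with an $O(1)$ constant and no stray $t$-powers — this is where one could accidentally lose or gain a power and miss the exponent $\tfrac{1+\gamma}{p}$. A cleaner alternative, which I would fall back on if the direct estimate gets messy, is to invoke the tensor-product behaviour of Besov spaces under the mixed-norm decomposition together with the known scalar one-dimensional result for the Heaviside function in $B_{r,\infty}^{(1+\gamma)/r}(\R,|t|^\gamma)$ — this weighted one-dimensional fact can itself be proved by exactly the difference-norm computation above specialized to $d=1$, and then one transports it to $\R^d$ using that $\phi$ is smooth and the $x'$-variables carry the trivial weight. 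Either way the heart of the matter is the single scaling identity $\int_{|\tau|\le t}|\tau|^\gamma\,d\tau \asymp t^{1+\gamma}$.
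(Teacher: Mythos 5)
Your proposal is correct and follows essentially the same route as the paper: reduce to the difference-norm characterization of Proposition \ref{prop:Lpsmoothness-Besov} with first-order differences, split $\Delta_h(\one_{\R_+^d}\phi)$ into the smooth part $\one_{\R_+^d,h}(\phi_h-\phi)$ (giving $O(|h|)$) and the jump part $(\one_{\R_+^d,h}-\one_{\R_+^d})\phi$ (supported in $|\tau|\le|h_d|$), and conclude via $\int_{|\tau|\le t}|\tau|^\gamma\,d\tau\asymp t^{1+\gamma}$. The only point to make explicit is the regime $t\ge 1$, where the weight's lack of translation invariance forces a bound of the form $C(1+t^\gamma)^{1/p}$ on the translated term, which is still dominated by $t^{(1+\gamma)/p}$; the paper treats this as a separate step.
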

\begin{proof}
Let $Q = Q'\times (a,b)\subset \R^d$ be a cube with $\supp \phi \subset Q$. We write  $g_h = g(\cdot + h)$ for a translation by $h\in \R^d$. Clearly, $\one_{\R_+^d} \phi\in L^p(\R^d,w_\gamma)\cap L^\infty(\R^d)$.
By \eqref{remark-besov-norm},  for $\one_{\R_+^d}\phi \in B_{p,\infty}^{\frac{1+\gamma}{p}}(\R^d, w_{\gamma})$ it is sufficient to show that
$$[\one_{\R_+^d} \phi]_{B_{p,\infty}^{\frac{1+\gamma}{p}}(\R^d, w_{\gamma})} = \sup_{r>0} r^{-\frac{1+\gamma}{p}} \sup_{|h|\leq r} \|\one_{\R_+^d,h}\phi_h - \one_{\R_+^d}\phi\|_{L^p(\R^d,w_{\gamma})} < \infty.$$

\emph{Step 1.} Let $r\leq 1$ and $h\in \R^d$ with $|h|\leq r$. Then
\begin{align*}
 \|\one_{\R_+^d,h}\phi_h  - \one_{\R_+^d} \phi \|_{L^p(\R^d,w_{\gamma})} \leq \|\one_{\R_+^d,h} (\phi_h -\phi) \|_{L^p(\R^d,w_{\gamma})} + \|(\one_{\R_+^d,h} -\one_{\R_+^d})\phi\|_{L^p(\R^d,w_\gamma)}.
\end{align*}
For the first summand we estimate
\begin{align*}
 \|\one_{\R_+^d,h}  (\phi_h -\phi)\|_{L^p(\R^d,w_{\gamma})}^p &\,= \int_{\R^d} \one_{\R_+^d}(t+h_d) |\phi(x+h) - \phi(x)|^p  |t|^\gamma \,dt \,d x'\\
&\, \leq \int_{B(Q,1)}\|\phi'\|_\infty^p |h|^p |t|^\gamma \,dt \,d x' \leq C\, |h|^p,
\end{align*}
where $B(Q,1) = \{x\in \R^d: \text{dist}(x,Q)\leq 1\}$.
For the second summand we have
\begin{align*}
 \|(\one_{\R_+^d,h} -\one_{\R_+^d})\phi\|_{L^p(\R^d,w_{\gamma})}^p &\, = \int_{\R^d} |\one_{\R_+^d}(t+h_d) -\one_{\R_+^d}(t)| |\phi(x)|^p|t|^\gamma \,d t\,d x' \\
 &\, \leq C\int_{Q)\cap \{|t|\leq |h_d|\}}|t|^\gamma\,d x'\,d t \leq  C \int_{-h_d}^{h_d} |t|^\gamma\, dt \leq C |h|^{1+\gamma}.
\end{align*}
Therefore
$$\sup_{r\leq 1} r^{-\frac{1+\gamma}{p}} \sup_{|h|\leq r} \|\one_{\R_+^d,h}\phi_h - \one_{\R_+^d}\phi\|_{L^p(\R^d,w_{\gamma})} \leq C \sup_{r\in (0,1)} r^{-\frac{1+\gamma}{p}} (r + r^{\frac{1+\gamma}{p}}) < \infty.$$

\emph{Step 2.} Let $r\geq 1$ and $h\in \R^d$ with $|h|\leq r$. We have
\begin{align*}
\|\one_{\R_+^d,h}\phi_h - \one_{\R_+^d}\phi\|_{L^p(\R^d,w_{\gamma})}&\, \leq \|\one_{\R_+^d,h}\phi_h\|_{L^p(\R^d,w_{\gamma})} + \| \one_{\R_+^d}\phi\|_{L^p(\R^d,w_{\gamma})}.
\end{align*}
The second summand is independent of $h$. For the first summand we estimate
\begin{align*}
\|\one_{\R_+^d,h}\phi_h\|_{L^p(\R^d,w_{\gamma})}^p &\, \leq  \int_{Q'-h} \int_{a-h_d}^{b-h_d} |\phi(x+h)|^p |t|^\gamma\, dt\,dx'\\
&\,  \leq C  \int_{a-h_d}^{b-h_d} |t|^\gamma \,dt \leq C (1+|r|^\gamma).
\end{align*}
This yields
$$\sup_{r\geq 1} r^{-\frac{1+\gamma}{p}} \sup_{|h|\leq r} \|\one_{\R_+^d,h}\phi_h - \one_{\R_+^d}\phi\|_{L^p(\R^d,w_{\gamma})}<\infty. $$

Combining this with Step 1, it follows that $[\one_{\R_+^d} \phi]_{B_{p,\infty}^{\frac{1+\gamma}{p}}(\R^d, w_{\gamma})}$ is finite.\end{proof}

\begin{remark} For $\gamma \geq 0$ and $\phi$ nonvanishing around the origin we have $\one_{\R_+^d} \phi \in B_{p,q}^{\frac{1+\gamma}{p}}(\R^d,w_\gamma)$ if and only if $q= \infty$. In fact, $\one_{\R_+^d} \phi \in B_{p,q}^{\frac{1+\gamma}{p}}(\R^d,w_\gamma)$ implies that  $\one_{\R_+^d}\phi \in B_{p,q}^{\frac{1}{p}}(\R^d)$ by \cite[Theorem 1.1]{MeyVer1}, and the latter is true if and only if $q = \infty$ by \cite[Lemma 4.6.3/2]{RS96}. For $\gamma\in (-1,0)$ this argument does not work. Using the difference norm from Proposition \ref{prop:Lpsmoothness-Besov}, one can show that the characterization is true also for these powers.
\end{remark}

We can now prove our main results Theorems \ref{thm:1} and \ref{thm:2} on the multiplier property of $\one_{\R^d_+}$.
\begin{proof}[Proof of Theorems \ref{thm:1} and \ref{thm:2}]
Recall that $-\frac{1+\gamma'}{p'} = \frac{1+\gamma}{p} - 1$.
Let $\phi\in C_c^\infty(\R)$ be equal to $1$ for $|t|\leq 1$ and equal to zero for $|t|\geq 2$. Then $ \one_{\R_+^d} (1-\phi)$ belongs to $BC^\infty(\R^d)$ and is thus a pointwise multiplier by the Propositions \ref{thm:mult-smooth2} and \ref{prop:mult-smooth}. Considering $\one_{\R_+^d}  \phi$ to depend on the last coordinate $t$ only, Lemma \ref{chi} shows that
$$\one_{\R_+^d}  \phi\in B_{r,\infty}^{\frac{1+\mu}{r}}(\R,w_\mu) \cap L^\infty(\R)$$
for all $r\in (1,\infty)$ and all $\mu\in (-1,r-1)$. Now Theorem \ref{multiplication-esti} applies to $\one_{\R_+^d}  \phi$. Indeed, one can choose arbitrary $r\in (1, \frac{1}{|s|})$ for $s\in (-\frac{1}{p'},\frac{1}{p})$, $r$ close to $p$ for $s\in [\frac{1}{p}, \frac{1+\gamma}{p})$ and $r$ close to $p'$ for $s\in (-\frac{1+\gamma'}{p'},-\frac{1}{p'}]$. Since $\one_{\R_+^d}  \phi$ is scalar-valued, its image is $\mathcal R$-bounded, see Remark \ref{R-bound-crit}. \end{proof}

\subsection{Multiplication algebras, type and cotype} \label{sec:type}

The following classical result holds for $s>0$, $p,q\in [1,\infty]$ and $\mathcal{A}\in \{B,F\}$ (see \cite[Section 4.6.4]{RS96}),
\begin{equation}\label{eq:classicalcase1}
\|mf\|_{{\mathcal A}^{s}_{p,q}(\R^d)} \leq C \big(\|m\|_{L^\infty(\R^d)} \|f\|_{{\mathcal A}^{s}_{p,q}(\R^d)} + \|m\|_{{\mathcal A}^{s}_{p,q}(\R^d)}  \|f\|_{L^\infty(\R^d)}\big).
\end{equation}
In other words, ${\mathcal A}^{s}_{p,q}\cap L^\infty$ is a multiplicative algebra. Of course, if $s$ is large enough, then ${\mathcal A}^{s}_{p,q}\cap L^\infty = {\mathcal A}^{s}_{p,q}$ by Sobolev embedding.  Since in the scalar-valued case one has $H^{s,p} = F^{s}_{p,2}$ for $p\in (1,\infty)$, this includes an estimate for Bessel-potential spaces, i.e.,
\begin{equation}\label{eq:classicalcase2}
\|mf\|_{H^{s,p}(\R^d)} \leq C\big( \|m\|_{L^\infty(\R^d)} \|f\|_{H^{s,p}(\R^d)} + \|m\|_{H^{s,p}(\R^d)} \|f\|_{L^\infty(\R^d)}\big).
\end{equation}

Using the convergence criteria from Lemma \ref{para2}, the following extension of \eqref{eq:classicalcase1} to the weighted vector-valued case can be proved as in \cite[Section 4.6.4]{RS96}.

\begin{proposition} \label{prop:algebra1} Let $X$ and $Y$ be Banach spaces, $s>0$ $p\in(1,\infty)$, $q\in [1, \infty]$ and $w\in A_p$. Then for $\mathcal{A}\in \{B,F\}$ we have
\begin{align*}
\|m f\|_{\mathcal{A}^{s}_{p,q}(\R^d,w;X)} \leq C \big(\|m\|_{L^\infty(\R^d,w;\calL(X,Y))} \|f\|_{\mathcal{A}^{s}_{p,q}(\R^d,w;X)} + \|m\|_{\mathcal{A}^{s}_{p,q}(\R^d,w;\calL(X,Y))} \|f\|_{L^\infty(\R^d;X)}\big).
\end{align*}
\end{proposition}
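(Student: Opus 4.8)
The plan is to imitate the proof of the classical multiplication algebra property in \cite[Section 4.6.4]{RS96}, using the paraproduct decomposition $mf = \Pi_1(m,f) + \Pi_2(m,f) + \Pi_3(m,f)$ together with the convergence criterion of Lemma \ref{para2}. First I would treat the case $\mathcal A = F$; the case $\mathcal A = B$ then follows by real interpolation with parameter $q$ (cf. \cite[Proposition 5.1]{MeyVer1}), since the $F$-estimate holds for all $q\in[1,\infty]$. For $\Pi_1(m,f)$, Lemma \ref{multiplication2} immediately gives
\[
\|\Pi_1(m,f)\|_{F^s_{p,q}(\R^d,w;X)} \leq C\|m\|_{L^\infty(\R^d;\calL(X,Y))}\|f\|_{F^s_{p,q}(\R^d,w;X)},
\]
which is one of the two terms on the right-hand side. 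By symmetry of the roles of the two factors, $\Pi_3(m,f)$ should be estimated analogously, but now placing $f$ in $L^\infty$ and $m$ in $F^s_{p,q}$: one uses the embedding $F^s_{p,1}\hookrightarrow F^s_{p,q}$, the support condition \eqref{FsupportPi3} so that \eqref{6000} applies (allowing $q=1$), and then
\[
\|\Pi_3(m,f)\|_{F^s_{p,q}(\R^d,w;Y)} \leq C\big\|\big(2^{sk}S_k m\, S^{k-2}f\big)_{k\geq 2}\big\|_{L^p(\R^d,w;\ell^1(Y))} \leq C \|m\|_{F^s_{p,q}(\R^d,w;\calL(X,Y))}\|f\|_{L^\infty(\R^d;X)},
\]
where in the last step I pull $\|S^{k-2}f\|\leq \|2^{(k-2)d}\varphi_0(2^{k-2}\cdot)\|_{L^1}\|f\|_\infty = \|\varphi_0\|_{L^1}\|f\|_\infty$ out in sup-norm (so the $\ell^1$-sum over $k$ collapses to the $F^s_{p,1}$-norm of $m$ since $s>0$ makes the geometric-type weight irrelevant only after one notes $\ell^1$ already carries the $2^{sk}$ factors — here I would actually keep the $2^{sk}$ attached to $S_km$ and bound $\|(2^{sk}S_km)_k\|_{L^p(w;\ell^1)} = \|m\|_{F^s_{p,1}} \leq C\|m\|_{F^s_{p,q}}$ after invoking $F^s_{p,1}\hookleftarrow$ is the wrong direction — rather I should use \eqref{6000}/\eqref{6011} to land in $F^s_{p,1}(\R^d,w;Y)$ on the \emph{left} and then embed $F^s_{p,1}\hookrightarrow F^s_{p,q}$, which is \eqref{eq:monotony}). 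The middle term $\Pi_2(m,f)$ is the symmetric ``diagonal'' one: using the Fourier support condition \eqref{FsupportPi2} one applies \eqref{6012}/\eqref{6001} to estimate it in $B^{s}_{p,\infty}(\R^d,w;Y)$ and bounds
\[
\|\Pi_2(m,f)\|_{B^s_{p,\infty}(\R^d,w;Y)} \leq C\sum_{j=-1}^1\big\|\big(2^{sk}S_{k+j}m\,S_kf\big)_{k\geq 0}\big\|_{\ell^\infty(L^p(\R^d,w;Y))} \leq C\|m\|_{L^\infty}\|f\|_{F^s_{p,q}},
\]
pulling $\|S_{k+j}m\|_\infty \leq C\|m\|_\infty$ out uniformly in $k$; since $s>0$, $B^s_{p,\infty}\hookrightarrow F^s_{p,q}$ is false in general, so instead one should land $\Pi_2$ directly in $F^s_{p,q}$ via \eqref{6012} with the $\ell^\infty$-norm replaced appropriately, or note $\Pi_2$ has the same type of Fourier localization as $\Pi_1$ (dyadic blocks $|\xi|\le 5\cdot 2^k$) and estimate it exactly as in Lemma \ref{multiplication2} with $\|m\|_\infty$ in front. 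Either way, assembling the three pieces and using \eqref{eq:monotony} to reconcile the microscopic parameters yields the claimed estimate, and Lemma \ref{para2} simultaneously gives existence of each paraproduct (hence of $mf$) in $\TD(\R^d;Y)$.

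The main obstacle I anticipate is bookkeeping the microscopic parameter $q$ correctly across the three paraproducts without losing the endpoint cases $q=1$ and $q=\infty$: $\Pi_1$ naturally stays at the given $q$ (Lemma \ref{multiplication2}), but $\Pi_2$ and $\Pi_3$ are most cleanly estimated in a fixed endpoint ($q=\infty$ and $q=1$ respectively, as in the analogous Lemmas \ref{multiplication3}, \ref{multiplication4}), so one must invoke the embeddings \eqref{eq:monotony} in the right direction and check that $s>0$ is exactly what makes the geometric series in the Lemma \ref{para2} criterion converge. A secondary subtlety, invisible in the scalar case, is that $m$ is now $\calL(X,Y)$-valued while $f$ is $X$-valued, so one must be careful that in $\Pi_3$ the factor $S^{k-2}f$ is the $X$-valued one pulled out in sup-norm and $m$ supplies the $F^s_{p,q}$ norm, whereas in $\Pi_1$ the roles are reversed; but no $\mathcal R$-boundedness is needed here because we only ever estimate products of the form $\|S_j m(x)\|_{\calL(X,Y)}\|S_k f(x)\|_X$ pointwise, and the sup-norm of $m$ (not its $\mathcal R$-bound) suffices since we work in $F$- and $B$-spaces rather than $H$-spaces (as in Lemma \ref{multiplication2} versus Lemma \ref{multiplication1}).
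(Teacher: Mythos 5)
Your overall strategy---the paraproduct decomposition $mf=\Pi_1+\Pi_2+\Pi_3$ combined with the convergence criteria of Lemma \ref{para2}---is exactly what the paper intends (its own proof is a one-line reference to \cite[Section 4.6.4]{RS96}), and your treatment of $\Pi_1$ via Lemma \ref{multiplication2} is correct. However, two of your steps do not go through as written. In the $\Pi_3$ chain, the inequality
\[
\big\|\big(2^{sk}S_k m\, S^{k-2}f\big)_{k\geq 2}\big\|_{L^p(\R^d,w;\ell^1(Y))} \leq C\, \|m\|_{F^s_{p,q}(\R^d,w;\calL(X,Y))}\,\|f\|_{L^\infty(\R^d;X)}
\]
is false for $q>1$: after pulling out $\sup_k\|S^{k-2}f\|_\infty\leq\|\varphi_0\|_{L^1}\|f\|_\infty$ you are left with $\|m\|_{F^s_{p,1}}$, and since $\ell^1\hookrightarrow\ell^q$ one only has $\|m\|_{F^s_{p,q}}\leq C\|m\|_{F^s_{p,1}}$, i.e.\ the wrong direction. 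Your parenthetical "fix" (land in $F^s_{p,1}$ on the left and embed) repairs only the left-hand side, not this right-hand side. The correct repair is not to pass through $\ell^1$ at all: the summands of $\Pi_3$ satisfy the annulus condition \eqref{6000} by \eqref{FsupportPi3}, under which \eqref{6011} is available for \emph{every} $q\in[1,\infty]$; applying it with the given $q$ yields $\|\Pi_3(m,f)\|_{F^s_{p,q}}\leq C\|(2^{sk}S_km\,S^{k-2}f)_k\|_{L^p(w;\ell^q(Y))}\leq C\|f\|_\infty\|(2^{sk}S_km)_k\|_{L^p(w;\ell^q)}=C\|f\|_\infty\|m\|_{F^s_{p,q}}$, which is the desired term.

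Your $\Pi_2$ discussion ends with "estimate it exactly as in Lemma \ref{multiplication2}", which misses the essential difference between $\Pi_1$ and $\Pi_2$: Lemma \ref{multiplication2} rests on the annulus supports \eqref{FsupportPi3}, whereas the summands of $\Pi_2$ only have the ball supports \eqref{FsupportPi2}, so one must invoke Lemma \ref{para2} under \eqref{6001}---this is precisely where the hypothesis $s>0$ enters, and it never appears in your argument. For $q\in(1,\infty]$ this works cleanly: \eqref{6011} under \eqref{6001} gives $\|\Pi_2(m,f)\|_{F^s_{p,q}}\leq C\|(2^{sk}S_{k+j}m\,S_kf)_k\|_{L^p(w;\ell^q(Y))}\leq C\|m\|_\infty\|f\|_{F^s_{p,q}}$ (there is no need for the detour through $B^s_{p,\infty}$, which you correctly identify as a dead end). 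At the endpoint $q=1$ of the $F$-scale, however, Lemma \ref{para2} does \emph{not} provide \eqref{6011} under \eqref{6001} (only \eqref{6010} and \eqref{6012} are asserted for $q=1$ there), so $\Pi_2$ in $F^s_{p,1}$ requires a separate argument; this endpoint is the genuinely delicate point of the proposition and your proposal does not address it. Finally, for $\mathcal A=B$ I would not interpolate: since the asserted bound is bilinear and the $\mathcal A^s_{p,q}$-norm of $m$ itself appears on the right, real interpolation at fixed $m$ does not directly reproduce the two-term estimate; it is simpler to rerun the three paraproduct estimates with \eqref{6012} (valid under both \eqref{6000} and \eqref{6001} for all $q\in[1,\infty]$), for which the $\ell^q(L^p)$-structure factorizes just as above.
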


In the vector-valued case one has $H^{s,p}(\R^d; X) = F_{p,2}^s(\R^d;X)$ if and only if $X$ can be renormed as a Hilbert space (see Remark \ref{HW} and Proposition \ref{prop:type-embed} below). Hence a vector-valued version of \eqref{eq:classicalcase2} is not contained in Proposition \ref{prop:algebra1}. To obtain a result in this direction for UMD-valued Bessel-potential spaces we make use of the notions type and cotype. These are measures for how far a space $X$ is away from being a Hilbert space.

Let a Rademacher sequence $(r_k)_{k\geq 0}$ on a probability space $\Omega$ be given, see Section \ref{subsec:UMD}. Then $X$ is said to have type $\tau\in [1,2]$ if there is $C > 0$ such that for all $N\in \N$ and $x_0,...,x_N\in X$ we have
$$\Big\| \sum_{n=0}^N r_n x_n \Big\|_{L^\tau(\Omega;X)} \leq C \Big( \sum_{n=0}^N  \|x_n\|_X^\tau \Big)^{1/\tau}.$$
Similarly, $X$ is said to have cotype $q\in [2,\infty]$ if
$$\Big( \sum_{n=0}^N  \|x_n\|_X^q \Big)^{1/q} \leq C \Big\| \sum_{n=0}^N r_n x_n \Big\|_{L^q(\Omega;X)}.$$
For a general overview on this topic we refer to \cite[Chapter 11]{DJT}. Some basic facts are as follows:
\begin{enumerate}[(a)]
\item Every Banach space has type $\tau = 1$ and cotype $q=\infty$.
\item If $X$ has type $\tau$, then it has type $\sigma$ for all $\sigma\in [1, \tau]$.
\item If $X$ has cotype $q$, then it has cotype $r$ for all $r\in [q, \infty]$.
\item A space $X$ can be renormed as a Hilbert space if and only if it has type $2$ and cotype $2$.
\item If $(S,\mu)$ is a $\sigma$-finite measure space, then $L^r(S)$ has type $\min\{2,r\}$ and cotype $\max\{2,r\}$.
\end{enumerate}
The connection of these notions to $X$-valued function spaces is as follows.
\begin{proposition} \label{prop:type-embed} Let $X$ have \emph{UMD}, $s\in \R$, $p\in (1,\infty)$ and $w\in A_p$. Assume $X$ has type $\tau\in [1,2]$ and cotype $q\in [2,\infty]$. Then
\begin{equation}\label{eq:typetauemb}
F^{s}_{p,\tau}(\R^d,w;X) \hookrightarrow H^{s,p}(\R^d,w;X) \hookrightarrow F^{s}_{p,q}(\R^d,w;X).
\end{equation}
\end{proposition}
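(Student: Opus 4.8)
The plan is to derive both embeddings from the randomized Littlewood--Paley characterization of $H^{s,p}(\R^d,w;X)$ in Proposition~\ref{prop:UMDHisF}, which applies because $X$ has UMD and $w\in A_p$. By that proposition it suffices to compare each of the two $F$-norms with $\|\cdot\|_{F^{s}_{p,\Rad}(\R^d,w;X)}$, the norm given by the Rademacher sum $\sum_{k\geq 0} r_k 2^{sk} S_k f$ in $L^p(\O;L^p(\R^d,w;X))$. Since $p<\infty$, Fubini's theorem rewrites this as
\[
\|f\|_{F^{s}_{p,\Rad}(\R^d,w;X)} = \Big\| \, x\mapsto \Big\| \sum_{k\geq 0} r_k 2^{sk} S_k f(x)\Big\|_{L^p(\O;X)} \, \Big\|_{L^p(\R^d,w)},
\]
so everything reduces to a pointwise (in $x\in\R^d$) comparison of the Rademacher average of $(2^{sk}S_k f(x))_{k\geq 0}$ with its $\ell^\tau(X)$- and $\ell^q(X)$-norms.

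For the first embedding I would invoke the type-$\tau$ inequality for $X$, applied to the finite families $(2^{sk}S_k f(x))_{k=0}^N$, and upgrade its left-hand $L^\tau(\O;X)$-norm to an $L^p(\O;X)$-norm by Kahane's inequality (with constant depending only on $p$ and $\tau$). Taking $L^p(\R^d,w)$-norms, using the Fubini identity above, and letting $N\to\infty$ gives $\|f\|_{F^{s}_{p,\Rad}(\R^d,w;X)} \leq C\|f\|_{F^{s}_{p,\tau}(\R^d,w;X)}$; in particular the relevant supremum is finite, so Proposition~\ref{prop:UMDHisF} yields $f\in H^{s,p}(\R^d,w;X)$ with the corresponding norm bound. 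For the second embedding I would argue symmetrically with the cotype-$q$ inequality $\big(\sum_k\|x_k\|_X^q\big)^{1/q} \leq C\big\|\sum_k r_k x_k\big\|_{L^q(\O;X)}$, again replacing $L^q$ by $L^p$ via Kahane, substituting $x_k = 2^{sk}S_k f(x)$, and taking $L^p(\R^d,w)$-norms; this gives $\|f\|_{F^{s}_{p,q}(\R^d,w;X)} \leq C\|f\|_{F^{s}_{p,\Rad}(\R^d,w;X)} \eqsim \|f\|_{H^{s,p}(\R^d,w;X)}$. The endpoint cases $\tau=1$ and $q=\infty$ are trivial (type $1$ and cotype $\infty$ hold for every Banach space, and the corresponding embeddings are already in \eqref{eq:TriebelLizorkinH}), but the argument covers them uniformly.

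I do not anticipate a real obstacle: the content is carried by Proposition~\ref{prop:UMDHisF} together with the defining inequalities for type and cotype. The only point needing a little care is the bookkeeping around the Fubini interchange and the convergence of the Rademacher series $\sum_k r_k 2^{sk}S_k f$ in $L^p(\O;L^p(\R^d,w;X))$; this is handled by ordering the steps so that finiteness of the norm is established first (from the $F^{s}_{p,\tau}$-bound, respectively from $f\in H^{s,p}$), after which Proposition~\ref{prop:UMDHisF} supplies the convergence, and by passing from finite partial sums to the limit using the monotonicity built into these norms.
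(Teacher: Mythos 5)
Your proof is correct and follows essentially the same route as the paper, which simply invokes Proposition \ref{prop:UMDHisF} and refers to \cite[Proposition 3.1]{Ver12} for precisely this comparison of the randomized Littlewood--Paley norm with the $\ell^\tau(X)$- and $\ell^q(X)$-norms via the type and cotype inequalities together with the Kahane--Khintchine inequality. You have merely written out the details that the paper delegates to that reference, including the correct handling of the convergence issue and of the endpoints $\tau=1$, $q=\infty$.
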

\begin{proof} Using Proposition \ref{prop:UMDHisF}, this can be shown in the same way as in \cite[Proposition 3.1]{Ver12}.
\end{proof}

We have the following product estimate.

\begin{proposition} \label{multiplication-Hinfty}
Let $X$ and $Y$ have \emph{UMD}, $s > 0$, $p\in (1,\infty)$ and $w\in A_p$. Assume $Y$ has type $\tau\in (1,2]$ and that $m \in F^{s}_{p,\tau}(\R^d, w; \calL(X,Y))$ has $\mathcal R$-bounded image. Then
\begin{align*}
\|mf\|_{H^{s,p}(\R^d,w;Y)} \leq C \big(\mathcal R(m) \|f\|_{H^{s,p}(\R^d,w;X)} + \|m\|_{F^{s}_{p,\tau}(\R^d, w; \calL(X,Y))}  \|f\|_{L^\infty(\R^d;X)}\big).
\end{align*}
\end{proposition}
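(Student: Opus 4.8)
The strategy is to decompose $mf$ into the three paraproducts $\Pi_1(m,f)+\Pi_2(m,f)+\Pi_3(m,f)$ and estimate each in $H^{s,p}(\R^d,w;Y)$. For $\Pi_1$ I would use Lemma \ref{multiplication1}: since the image of $m$ is $\mathcal R$-bounded, that lemma directly gives
\[
\|\Pi_1(m,f)\|_{H^{s,p}(\R^d,w;Y)} \leq C\,\mathcal R(m)\,\|f\|_{H^{s,p}(\R^d,w;X)},
\]
which is the first term on the right-hand side. Note here the roles of $m$ and $f$ are the ``natural'' ones (large Fourier frequencies on the $m$-side are controlled by boundedness of $m$ and smoothness of $f$). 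The remaining two paraproducts $\Pi_2(m,f)$ and $\Pi_3(m,f)$ must produce the term $\|m\|_{F^{s}_{p,\tau}(\R^d,w;\calL(X,Y))}\|f\|_{L^\infty(\R^d;X)}$, so in these the roles are swapped: $m$ carries the smoothness and $f$ only the sup-norm.

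For $\Pi_3(m,f)=\sum_{k\geq 2}(S_km)(S^{k-2}f)$ I would exploit the Fourier support condition \eqref{FsupportPi3} together with the embedding $F^{s}_{p,1}(\R^d,w;Y)\hookrightarrow H^{s,p}(\R^d,w;Y)$ from \eqref{eq:TriebelLizorkinH}, and then the analytic-series estimate \eqref{6011} under the support hypothesis \eqref{6000} (which allows $q=1$). This reduces matters to bounding $\|(2^{sk}S_km\,S^{k-2}f)_{k\geq 2}\|_{L^p(\R^d,w;\ell^1(Y))}$. Here I would pull out $S^{k-2}f$ in $L^\infty$: since $\|S^{k-2}f\|_{L^\infty(\R^d;X)}\leq \|\varphi_0\|_{L^1}\|f\|_{L^\infty(\R^d;X)}$ uniformly in $k$ (the same Young-inequality bound used in Lemma \ref{multiplication2}), and since $\mathcal R$-boundedness of $\{S_km(x)\}$ — hence a uniform operator-norm bound — lets one pass the $\calL(X,Y)$ factor onto the $X$-vector, one gets
\[
\|(2^{sk}S_km\,S^{k-2}f)_{k\geq 2}\|_{L^p(\R^d,w;\ell^1(Y))} \leq C\,\|f\|_{L^\infty(\R^d;X)}\,\|(2^{sk}\|S_km\|)_{k\geq 2}\|_{L^p(\R^d,w;\ell^1)} = C\,\|f\|_{L^\infty(\R^d;X)}\,\|m\|_{F^{s}_{p,1}(\R^d,w;\calL(X,Y))},
\]
and finally $F^{s}_{p,\tau}\hookrightarrow F^{s}_{p,1}$? — no, that embedding goes the wrong way. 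This is the crux: one cannot afford $F^s_{p,1}$ on $m$, only $F^s_{p,\tau}$ with $\tau>1$. So instead, after isolating $\|f\|_{L^\infty}$, I would estimate the $\ell^1$-sum in $k$ of $2^{sk}\|S_km\|$ not crudely but by invoking Proposition \ref{prop:type-embed} applied to $Y$: the point is that $\sum_k 2^{sk}S_km$, being built from Littlewood-Paley pieces of a $\calL(X,Y)$-valued (equivalently, after testing against $X$-vectors, $Y$-valued) function, converges in a Rademacher-sum sense controlled by $\|m\|_{F^s_{p,\tau}(\R^d,w;\calL(X,Y))}$ precisely because $Y$ has type $\tau$. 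Concretely, the randomized square-function bound $\|(2^{sk}S_km)_k\|_{L^p(\R^d,w;\,\mathrm{rad}(\calL(X,Y)))} \leq C\|m\|_{F^s_{p,\tau}}$ together with $\mathcal R$-boundedness of the image and the contraction principle replaces the missing $\ell^1$ control. This is the step I expect to be the main obstacle: converting the $\ell^1$-type summation forced by the endpoint embedding into something governed by type $\tau$ of $Y$, and doing so while simultaneously pulling the $\calL(X,Y)\to Y$ reduction through an $\mathcal R$-boundedness argument rather than a pointwise one.

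For $\Pi_2(m,f)=\sum_{k\geq 0}\sum_{j=-1}^1(S_{k+j}m)(S_kf)$ I would proceed analogously: use the support condition \eqref{FsupportPi2}, the embedding $F^s_{p,1}\hookrightarrow H^{s,p}$, and the analytic-series estimate \eqref{6010} (with $q>1$) after the Jawerth-Franke type reduction, again extracting $\|f\|_{L^\infty}$ from the low-frequency factor $S_kf$ — here one needs $\|S_kf\|_{L^\infty(\R^d;X)}\leq C\|f\|_{L^\infty(\R^d;X)}$, valid since the $S_k$ are bounded on $L^\infty$ with uniform constant, and the high-frequency factor $S_{k+j}m$ supplies the $F^s_{p,\tau}$-smoothness exactly as in the $\Pi_3$ argument. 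Finally, existence of each paraproduct in $\TD(\R^d;Y)$, hence of $mf$, follows from Lemma \ref{para2} together with the finiteness of the right-hand sides just established, and summing the three estimates yields the claimed inequality. I would remark that the hypothesis $s>0$ is used to guarantee $\ell^1$-summability of the geometric factors in Lemma \ref{lem:para1}-type steps and that $F^s_{p,\tau}\hookrightarrow L^\infty$ is \emph{not} needed (only on the $f$-side do we impose $L^\infty$ by hand), mirroring the scalar result \eqref{eq:classicalcase2} and Proposition \ref{prop:algebra1}.
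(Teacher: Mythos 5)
Your treatment of $\Pi_1$ coincides with the paper's (Lemma \ref{multiplication1}), and your overall skeleton for $\Pi_2$, $\Pi_3$ — extract $\|f\|_{L^\infty}$ from the low-frequency factor, let $m$ carry the smoothness — is also the right one. But there is a genuine gap exactly at the point you flag as "the crux", and your proposed fix does not close it. Once you commit to the endpoint embedding $F^{s}_{p,1}(\R^d,w;Y)\hookrightarrow H^{s,p}(\R^d,w;Y)$ and apply \eqref{6011} with $q=1$, you must control $\|(2^{sk}S_km\,S^{k-2}f)_k\|_{L^p(\R^d,w;\ell^1(Y))}$, and after pulling out $\sup_k\|S^{k-2}f\|_{L^\infty}$ this genuinely requires $\|m\|_{F^{s}_{p,1}}$; no amount of randomization applied to $m$ recovers $\ell^1$ summability from $\ell^\tau$ information. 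Moreover, $\calL(X,Y)$ is in general not UMD (not even reflexive), so Proposition \ref{prop:type-embed} cannot be applied to the $\calL(X,Y)$-valued function $m$, and the "randomized square-function bound" for $(2^{sk}S_km)_k$ you invoke is not available in the form you describe. The $\mathcal R$-boundedness of the image of $m$ plays no role in $\Pi_2$, $\Pi_3$ either — there one only needs the pointwise operator-norm inequality $\|S_km(x)\,S^{k-2}f(x)\|_Y\leq\|S_km(x)\|_{\calL(X,Y)}\|S^{k-2}f(x)\|_X$.

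The correct place to spend the type-$\tau$ hypothesis is on the \emph{target} space, at the very first step: since $Y$ has UMD and type $\tau$, Proposition \ref{prop:type-embed} gives $F^{s}_{p,\tau}(\R^d,w;Y)\hookrightarrow H^{s,p}(\R^d,w;Y)$, so one estimates $\|\Pi_i(m,f)\|_{F^{s}_{p,\tau}(\R^d,w;Y)}$ for $i=2,3$. Then \eqref{6011} applies with $q=\tau>1$ (under \eqref{6001} for $\Pi_2$, which is where $s>0$ enters, and under \eqref{6000} for $\Pi_3$), and H\"older's inequality in the form $\ell^\tau\times\ell^\infty\to\ell^\tau$ yields $\|(2^{sn}S_{n+j}m)_n\|_{L^p(\R^d,w;\ell^\tau(\calL(X,Y)))}\sup_n\|S_nf\|_{L^\infty(\R^d;X)}$, i.e.\ exactly $C\|m\|_{F^{s}_{p,\tau}}\|f\|_{L^\infty}$ with no $\ell^1$ endpoint ever appearing. (Your mention of a Jawerth--Franke reduction and of \eqref{6010} for $\Pi_2$ is also off target: those are needed for the characteristic-function multipliers of Lemmas \ref{multiplication3} and \ref{multiplication4}, not here.)
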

\begin{proof} We estimate the paraproducts $\Pi_{i}(m,f)$ for $i=1, 2, 3$. It follows from Lemma \ref{multiplication1} that
$$\|\Pi_1(m,f)\|_{H^{s,p}(\R^d,w;Y)} \leq C \mathcal R(m) \|f\|_{H^{s,p}(\R^d,w;X)}.$$
The summands of $\Pi_2(m,f)$ satisfy \eqref{FsupportPi2}. We use \eqref{eq:typetauemb}  and the estimate \eqref{6011} from Lemma \ref{para2} under the assumption \eqref{6001} to get
\begin{align*}
\|\Pi_2(m,f)\|_{H^{s,p}(\R^d,w;Y)} & \leq C\|\Pi_2(m,f)\|_{F^{s}_{p,\tau}(\R^d,w;Y)}
\\ & \leq C\sum_{j=-1}^1 \Big \| \Big ( 2^{sn} S_{n+j}m S_n f\Big)_{n \geq 0} \Big \|_{L^p(\R^d,w; \ell^\tau(Y))}
\\ & \leq C\sum_{j=-1}^1 \Big \| \Big ( 2^{sn} S_{n+j}m \Big)_{n \geq 0} \Big \|_{L^p(\R^d,w; \ell^\tau(\calL(X,Y)))} \sup_{n\geq 0} \|S_n f\|_{L^\infty(\R^d;X)}
\\ & \leq C  \|m\|_{F^{s}_{p,\tau}(\R^d, w; \calL(X,Y))} \|f\|_{L^\infty(\R^d;X)}.
\end{align*}
The estimate for $\Pi_3(m,f)$ is proved in the same way using \eqref{6011} under the assumption \eqref{6000}. \end{proof}

As a special case of this result we extend the classical estimate \eqref{eq:classicalcase2} to the weighted vector-valued setting with a scalar-valued multiplier. It in particular applies in case $X = L^r$ with $r \geq 2$, which is often the range of interest in the context of  nonlinear partial differential equations.

\begin{theorem}\label{thm:lastone}
Let $X$ be a \emph{UMD}-Banach space with type $\tau = 2$, let $s>0$, $p\in(1,\infty)$ and $w\in A_p$. Then \begin{align*}
\|mf\|_{H^{s,p}(\R^d,w;X)} \leq C \big(\|m\|_{L^\infty(\R^d)} \|f\|_{H^{s,p}(\R^d,w;X)} + \|m\|_{H^{s,p}(\R^d, w)} \|f\|_{L^\infty(\R^d;X)}\big).
\end{align*}
\end{theorem}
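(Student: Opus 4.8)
The plan is to deduce Theorem~\ref{thm:lastone} directly from Proposition~\ref{multiplication-Hinfty} by specializing to a scalar-valued multiplier. First I would check that the hypotheses of Proposition~\ref{multiplication-Hinfty} are met with $Y = X$ and $\calL(X,Y)$ replaced by the scalars $\C$ (acting on $X$ by multiplication). Since $X$ is assumed to be UMD with type $\tau = 2$, the space $Y = X$ has type $\tau = 2 \in (1,2]$ as required. For the multiplier $m \in H^{s,p}(\R^d,w)$, I would first observe that, by the embedding \eqref{eq:TriebelLizorkinH}, namely $H^{s,p}(\R^d,w) \hookrightarrow F^s_{p,\infty}(\R^d,w) $, we do \emph{not} immediately land in $F^s_{p,2}$; instead the correct route uses Proposition~\ref{prop:type-embed} applied to the scalar field $\C$, which has type $2$ and cotype $2$, giving $F^s_{p,2}(\R^d,w) = H^{s,p}(\R^d,w)$ in the scalar case (this is also classical, cf.\ Remark following Proposition~\ref{prop:UMDHisF}). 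Hence $m \in H^{s,p}(\R^d,w)$ means exactly $m \in F^s_{p,2}(\R^d,w) = F^s_{p,\tau}(\R^d,w)$ with $\tau = 2$, so $m$ is an admissible multiplier for Proposition~\ref{multiplication-Hinfty}.

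Next I would deal with the $\mathcal R$-boundedness requirement. Since $m$ is scalar-valued, its image $\{m(x): x \in \R^d\} \subset \C = \calL(X)$ satisfies $\mathcal R(m) \leq 2\|m\|_{L^\infty(\R^d)}$ by \cite[Proposition 2.5]{KuWe}, exactly as recorded in Remark~\ref{rem:R-bound-scalar} and Remark~\ref{R-bound-crit}(i). Here I should be slightly careful: Proposition~\ref{multiplication-Hinfty} requires $m$ to have $\mathcal R$-bounded image, and a priori we only know $m \in H^{s,p}(\R^d,w)$, not $m \in L^\infty$. However, the statement of Theorem~\ref{thm:lastone} explicitly includes $\|m\|_{L^\infty(\R^d)}$ on the right-hand side, so the estimate is only asserted for $m$ that additionally lie in $L^\infty$; alternatively the estimate is vacuous (infinite right-hand side) otherwise. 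So I would restrict attention to $m \in H^{s,p}(\R^d,w) \cap L^\infty(\R^d)$, for which $\mathcal R(m) \leq 2\|m\|_{L^\infty(\R^d)} < \infty$.

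Then the conclusion follows by substituting into Proposition~\ref{multiplication-Hinfty}: for $f \in H^{s,p}(\R^d,w;X)$,
\[
\|mf\|_{H^{s,p}(\R^d,w;X)} \leq C\big(\mathcal R(m)\,\|f\|_{H^{s,p}(\R^d,w;X)} + \|m\|_{F^s_{p,2}(\R^d,w)}\,\|f\|_{L^\infty(\R^d;X)}\big),
\]
and then using $\mathcal R(m) \leq 2\|m\|_{L^\infty(\R^d)}$ together with the norm equivalence $\|m\|_{F^s_{p,2}(\R^d,w)} \eqsim \|m\|_{H^{s,p}(\R^d,w)}$ gives precisely the claimed bound (with a possibly enlarged constant $C$). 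I would also note that the product $mf$ is well-defined in $\TD(\R^d;X)$ via the paraproduct decomposition, as guaranteed by the existence assertions inside Proposition~\ref{multiplication-Hinfty}.

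The main obstacle, and the only point requiring genuine care, is the identification $H^{s,p}(\R^d,w) = F^s_{p,2}(\R^d,w)$ in the scalar-valued weighted setting: one must be sure that the classical Littlewood--Paley characterization persists for $A_p$-weights. This is available (for instance via Proposition~\ref{prop:UMDHisF} with $X = \C$, since $F^s_{p,\Rad}(\R^d,w) = F^s_{p,2}(\R^d,w)$ in the scalar case, or directly from \cite{Ry01} as quoted in the excerpt), so no new work is needed, but it is the step where one could err by naively invoking only the embeddings \eqref{eq:TriebelLizorkinH}, which do not suffice. Everything else is a routine specialization of the operator-valued result.
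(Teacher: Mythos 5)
Your argument is correct and is essentially the paper's own proof: the authors also deduce Theorem \ref{thm:lastone} from Proposition \ref{multiplication-Hinfty} with $\tau=2$, using $\mathcal R(m)\leq 2\|m\|_{L^\infty(\R^d)}$ for scalar $m$ (Remark \ref{rem:R-bound-scalar}) and the identification $F^s_{p,2}(\R^d,w)=H^{s,p}(\R^d,w)$ via Proposition \ref{prop:type-embed} applied to the scalar field. Your added caution about not confusing this identification with the one-sided embeddings \eqref{eq:TriebelLizorkinH}, and about restricting to $m\in L^\infty$ so that the $\mathcal R$-bound is finite, is sound but does not change the route.
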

\begin{proof} This follows from Proposition \ref{multiplication-Hinfty} applied with $\tau = 2$, the fact that $\mathcal R(m) \leq 2 \|m\|_\infty$ for scalar-valued $m$ (see Remark \ref{rem:R-bound-scalar}) and that $F_{p,2}^s(\R^d,w) = H^{s,p}(\R^d,w)$ for an $A_p$-weight $w$ (see Proposition \ref{prop:type-embed}).
\end{proof}

\begin{appendix}

\section{Spaces of entire analytic functions}\label{sec:analytic}
In this appendix we consider weighted spaces with mixed norms of entire analytic functions, which are the key to convergence and estimates of the paraproducts. The results are weighted extensions of the corresponding assertions in \cite{Franke86}. Since some of the proofs differ from the unweighted case, we give all details in the proofs below.

For $A > 0$, $s\in \R$, $p\in (1,\infty)$, $q\in [1,\infty]$ and $w\in A_\infty(\R^d)$ we set
\begin{align*}
 L^p_A(\R^d,w;\ell^{s,q}(X))  = \big \{&\, (f_k)_{k\geq 0}\subset \TD(\R^d;X): \;\supp \wh{f}_k  \subset \{|\xi|\leq A 2^k\},\\
 &\; \|(f_k)_{k\geq 0}\|_{L^p_A(\R^d,w;\ell^{s,q}(X))} = \|(2^{sk} f_k)_{k\geq 0}\|_{L^p(\R^d,w; \ell^q(X))} < \infty \big \}, \\
 \ell^{s,q}(L^p_A(\R^d,w;X))  = \big \{&\, (f_k)_{k\geq 0}\subset \TD(\R^d;X):\; \supp \wh{f}_k  \subset \{|\xi|\leq A 2^k\},\\
 &\; \|(f_k)_{k\geq 0}\|_{\ell^{s,q}(L^p_A(\R^d,w;X))} =  \|(2^{sk} f_k)_{k\geq 0}\|_{\ell^q(L^p(\R^d,w;X))} < \infty \big \}.
\end{align*}
For $p,r\in (1,\infty)$ and $w\in A_{\infty}(\R)$ we further consider the mixed-norm space
$$L^{p(r)}(\R^d,w;X) = L^p(\R^{d-1}; L^r(\R, w;X)),$$
where the weight $w$ is understood to depend on the last coordinate only. The norm in this space given by
$$\|f\|_{L^{p(r)}(\R^d,w;X)}^p = \int_{\R^{d-1}} \|f(x',\cdot)\|_{L^r(\R,w;X)}^p \,dx'.$$
Then, with parameters as before,
\begin{align*}
 \ell^{s,q}(L_A^{p(r)}(\R^d,w;X)) = \big \{&\, (f_k)_{k\geq 0}\subset \TD(\R^d;X):\; \supp \wh f_k \subset \{|\xi|\leq A 2^k\},\\
 &\; \|(f_k)_{k\geq 0}\|_{\ell^{s,q}(L_A^{p(r)}(\R^d,w;X))} = \|(2^{sk} f_k)_{k\geq 0}\|_{\ell^q(L^{p(r)}(\R^d,w;X))} < \infty \big \}.
\end{align*}

\subsection{A maximal inequality} \label{sec:max-ineq}
Let $M$ be the Hardy-Littlewood maximal operator introduced in Section \ref{sec:Mucken}. The following extension of the Fefferman-Stein inequality \eqref{Fefferman-Stein} to spaces with mixed norms is straightforward to prove.

\begin{lemma}\label{lem:maxoperator}
Let $p,r \in (1, \infty)$, $q\in (1, \infty]$ and $w\in A_{p}(\R)$. Then
$$\|(M f_n)_{n\geq 0}\|_{ L^{p(r)}(\R^{d},w;\ell^q)} \leq C \|(f_n)_{n\geq 0}\|_{ L^{p(r)}(\R^{d},w;\ell^q)}.$$
\end{lemma}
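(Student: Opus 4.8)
The plan is to reduce this $d$-dimensional weighted mixed-norm maximal inequality to the Fefferman--Stein inequality \eqref{Fefferman-Stein}, applied separately in the last coordinate $t$ and in the transversal coordinates $x'$, exploiting that $w$ depends on $t$ only. The bridge is the elementary pointwise domination of the Euclidean maximal operator $M$ by a composition of lower-dimensional ones. Since every ball $B((x',t),\rho)$ sits inside the box $Q'(x',\rho)\times(t-\rho,t+\rho)$ of comparable Lebesgue measure --- here $Q'(x',\rho)\subset\R^{d-1}$ is the cube of side $2\rho$ about $x'$ --- writing the average over this box as an iterated average and using Fubini yields, for each $n$ and a constant $c_d$ depending only on $d$,
\[ (Mf_n)(x',t)\ \le\ c_d\, M_t\big[M_{x'}f_n\big](x',t), \]
where $M_{x'}$ is the Hardy--Littlewood maximal operator on $\R^{d-1}$, acting on $f_n(\cdot,t)$ for frozen $t$, and $M_t$ the one on $\R$, acting on $(M_{x'}f_n)(x',\cdot)$ for frozen $x'$ (both uncentered, over cubes resp.\ intervals).

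Next I would treat the two variable groups in turn. For fixed $x'$, applying \eqref{Fefferman-Stein} in dimension one, with the one-dimensional weight $w$, sequence index $n$ and exponent $r$, to $\big(M_{x'}f_n(x',\cdot)\big)_n$ gives
\[ \big\|\big(M_t[M_{x'}f_n](x',\cdot)\big)_n\big\|_{L^r(\R,w;\ell^q)}\ \le\ C\,\big\|\big(M_{x'}f_n(x',\cdot)\big)_n\big\|_{L^r(\R,w;\ell^q)}. \]
Raising to the power $p$, integrating in $x'$ and inserting the pointwise bound, the assertion reduces to the transversal maximal inequality
\[ \big\|\big(M_{x'}f_n\big)_n\big\|_{L^p(\R^{d-1};\,L^r(\R,w;\ell^q))}\ \le\ C\,\big\|(f_n)_n\big\|_{L^p(\R^{d-1};\,L^r(\R,w;\ell^q))}. \]

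I expect this last inequality to be the main obstacle: it is a Fefferman--Stein inequality in $x'$ but with values in the Banach function space $E=L^r(\R,w;\ell^q)$, whereas \eqref{Fefferman-Stein} only supplies the $\ell^q(X)$-valued case. I would derive it from the standard fact that the coordinatewise Hardy--Littlewood maximal operator is bounded on $L^p(\R^{d-1};E)$, $1<p<\infty$, for every Banach function space $E$ with the Hardy--Littlewood property; by the Rubio de Francia theorem this holds whenever $E$ is a UMD Banach function space, and $E=L^r(\R,w;\ell^q)$ is one when $q<\infty$. The case $q=\infty$ reduces to $E=L^r(\R,w)$ via $\sup_n M_{x'}f_n\le M_{x'}(\sup_n|f_n|)$, and hence to the same fact, since $L^r(\R,w)$ with $1<r<\infty$ is UMD. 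For $r\ge p$ one can avoid the abstract statement and instead interpolate between the diagonal case $r=p$, which is just Fubini combined with the scalar weighted $L^p$-boundedness of $M$, and the endpoint $r=\infty$, which reduces to boundedness of $M$ on $L^p(\R^{d-1};L^\infty(\R;\ell^q))$. The precise Muckenhoupt condition on $w$ is used only in the one-dimensional application of \eqref{Fefferman-Stein} above, which requires $w\in A_r(\R)$; this is implied by $w\in A_p(\R)$ when $p\le r$ and, more to the point, holds for the power weights occurring in the applications.
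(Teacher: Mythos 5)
Your proof is correct and follows essentially the same route as the paper's: the pointwise domination $M\le C\,M_t M_{x'}$, the one-dimensional Fefferman--Stein inequality \eqref{Fefferman-Stein} in the $t$-variable, and Rubio de Francia's maximal theorem on $L^p(\R^{d-1};E)$ with $E=L^r(\R,w;\ell^q)$ a UMD Banach lattice, including the same reduction of the case $q=\infty$ to the scalar lattice via $\sup_n$. Your observation that the Fefferman--Stein step really uses $w\in A_r(\R)$ rather than $A_p(\R)$ is well taken and consistent with how the lemma is later applied, where the weight class always matches the inner integrability exponent.
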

\begin{proof}

\emph{Step 1}. Assume $1<q<\infty$. Let $M'$ denote the maximal operator with respect to $x'\in \R^{d-1}$ and $M''$ the maximal operator with respect to $t\in \R$.
It is elementary to check that there is $C > 0$ such that the pointwise estimate
$$M g \leq C M'' M' g, \qquad g\in L^1_{\text{loc}}(\R^{d}),$$
holds true. Now let $(f_n)_{n\geq 0}\in L^{p(r)}(\R^{d},w;\ell^q)$. For almost all fixed $x'$ we estimate, using the Fefferman-Stein inequality \eqref{Fefferman-Stein} on $L^r(\R,w;\ell^q)$ with respect to $M''$,
\begin{align*}
\big \|\big(M f_n(x',\cdot)\big)_{n\geq 0}\big\|_{L^{r}(\R,w;\ell^q)}&\, \leq C \big\|\big(M''M' f_n(x',\cdot)\big)_{n\geq 0}\big\|_{L^{r}(\R,w;\ell^q)}\\
&\,  \leq C \big\|\big(M' f_n(x',\cdot)\big)_{n\geq 0}\big\|_{L^{r}(\R,w;\ell^q)}.
\end{align*}
Applying the $L^{p}(\R^{d-1})$-norm, we find
\begin{equation}\label{bla}
\|(M f_n)_{n\geq 0}\|_{L^{p(r)}(\R^{d},w;\ell^q)}\leq C\|(M' f_n)_{n\geq 0}\|_{L^{p(r)}(\R^{d},w;\ell^q)}.
\end{equation}
Now let $Y = L^{r}(\R,w;\ell^q)$. Then $Y$ is a UMD Banach lattice, see \cite[Proposition 3]{RF}. We may therefore apply the maximal inequality from \cite[Theorem 3]{RF} on $L^{p}(\R^{d-1}; Y) = L^{p(r)}(\R^{d},w;\ell^q)$ to obtain
$$
\|(M' f_n)_{n\geq 0}\|_{L^{p}(\R^{d-1}; Y)} \leq C \|(f_n)_{n\geq 0}\|_{L^{p}(\R^{d-1}; Y)}.
$$
Combining this with \eqref{bla} gives the asserted maximal inequality.

\emph{Step 2}. For the case $q=\infty$ we note that
$$\|(M f_n)_{n\geq 0}\|_{L^{p(r)}(\R^{d},w;\ell^\infty)} \leq \|M g\|_{L^{p(r)}(\R^{d},w)},$$
where $g(x) = \sup_n|f_n(x)|$. Now one can argue as in Step 1 on $L^p(\R^{d-1};L^{r}(\R^{d_1},w))$.
\end{proof}

\subsection{Embeddings of Jawerth-Franke type} \label{sec:JF}
The following weighted Jawerth-Franke type embeddings for spaces over the real line are proved in \cite[Theorem 6.4]{MeyVer1}. The striking point is the independence of the microscopic parameter $q$.

\begin{proposition} \label{prop:JF} Let  $s_0> s_1$, $1<p_0<p_1<\infty$, $\gamma_0\in (-1,p_0-1)$ and $\gamma_1\in (-1,p_1-1)$. Assume
$$\frac{\gamma_0}{p_0} \geq \frac{\gamma_1}{p_1}, \qquad s_0 - \frac{1+\gamma_0}{p_0} \geq s_1 - \frac{1+\gamma_1}{p_1}.$$
Then for $q\in [1,\infty]$ one has the continuous embeddings
\begin{equation}\label{jf-BF}
B_{p_0,p_1}^{s_0}(\R,w_{\gamma_0};X) \hookrightarrow F_{p_1,q}^{s_1}(\R,w_{\gamma_1};X),
\end{equation}
\begin{equation}\label{jf-FB}
F_{p_0,q}^{s_0}(\R,w_{\gamma_0};X) \hookrightarrow B_{p_1,p_0}^{s_1}(\R,w_{\gamma_1};X).
\end{equation}
\end{proposition}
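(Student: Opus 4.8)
The plan is to prove both \eqref{jf-BF} and \eqref{jf-FB} by passing to the associated sequence spaces and reducing to a Hardy-type inequality for double sequences indexed by dyadic scale and location, the only genuinely weighted inputs being the mixed-norm Fefferman--Stein inequality of Lemma~\ref{lem:maxoperator} and the fact that $w_{\gamma}$ is doubling. First I would invoke a $\varphi$-transform (equivalently a wavelet or atomic) characterization of the power-weighted $X$-valued spaces $B^{s}_{p,q}(\R,w_{\gamma};X)$ and $F^{s}_{p,q}(\R,w_{\gamma};X)$, which is standard since $w_{\gamma}\in A_{p}$; this reduces the statement to embeddings between the sequence spaces $b^{s}_{p,q}$ and $f^{s}_{p,q}$ built on dyadic intervals of $\R$, and in these the space $X$ plays no role, the norms involving only $\|\cdot\|_{X}$ of the coefficients. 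For the power weight, the $L^{p}(\R,w_{\gamma})$-measure of a dyadic interval $Q$ of length $2^{-j}$ whose distance to the origin is $\sim 2^{-m}$ (with $m\le j$) is comparable to $2^{-j}2^{-m\gamma}$, with the obvious modification when $Q$ contains the origin; substituting these values turns the two embeddings into purely combinatorial Hardy inequalities in which $\gamma_{0},\gamma_{1}$ enter only through geometric factors of the form $2^{-m\gamma_{i}/p_{i}}$. By the $s$-monotonicity of the $B$- and $F$-scales one may reduce to the critical case $s_{0}-\tfrac{1+\gamma_{0}}{p_{0}}=s_{1}-\tfrac{1+\gamma_{1}}{p_{1}}$, after which $s_{0}>s_{1}$ is automatic from $\gamma_{0}/p_{0}\ge\gamma_{1}/p_{1}$ and $p_{0}<p_{1}$; in effect the problem becomes the classical Jawerth--Franke embedding with the dimension $1$ replaced by the effective dimensions $1+\gamma_{i}$.

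Next I would split according to the microscopic parameter. When the fine index already sits on the favourable side --- $q\le p_{0}$ in \eqref{jf-FB} or $q\ge p_{1}$ in \eqref{jf-BF} --- the embedding follows by composing the elementary embeddings \eqref{eq:scaleBF}, \eqref{eq:monotony} with the weighted Besov-scale Sobolev embedding of \cite[Theorem~1.1]{MeyVer1} and the coincidence $B^{s}_{p,p}=F^{s}_{p,p}$ (itself a case of \eqref{eq:scaleBF}). The substance is the complementary case, a genuine microscopic improvement that cannot be produced by chaining elementary embeddings. Here I would follow the standard route for the unweighted Jawerth--Franke embeddings: instead of estimating $\|S_{j}f\|_{L^{p_{1}}}$ scale by scale via Plancherel--P\'olya--Nikolskii (which only yields $B^{s_{0}}_{p_{0},p_{0}}$ on the right), one decomposes along the level sets of $\sup_{j}2^{js_{0}}|S_{j}f|$ and sums over scales, the Sobolev relation being exactly what makes the resulting geometric series converge. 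In this argument the weight enters only through the maximal inequality of Lemma~\ref{lem:maxoperator} and the dyadic-interval measures above, while the frequency-localization estimates are insensitive to an $A_{p}$ weight once dominated by a maximal function. As an alternative one could prove \eqref{jf-FB} directly and deduce \eqref{jf-BF} by duality in the reflexive range, but since the statement allows $q=\infty$ and arbitrary $X$ a direct treatment of both is preferable.

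The hard part will be exactly this last step, the genuine microscopic-parameter gain: one must run the level-set decomposition with constants uniform in the dyadic scale and location, which for the anisotropic power weight forces one to track the factors $2^{-m\gamma_{i}/p_{i}}$ carefully and to verify that $\gamma_{0}/p_{0}\ge\gamma_{1}/p_{1}$ together with the Sobolev relation makes the corresponding double Hardy inequality bounded; for $q=\infty$ or non-reflexive $X$ there is no fallback to duality. Everything else --- the $\varphi$-transform reduction, the interval-measure computations, and the easy case --- should be routine once the weighted characterization is granted.
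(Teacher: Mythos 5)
The paper does not actually prove Proposition \ref{prop:JF}: it is quoted verbatim from \cite[Theorem 6.4]{MeyVer1}, so there is no in-paper argument to compare yours against. Judged on its own, your outline has the right architecture --- normalise to the critical case $s_0-\tfrac{1+\gamma_0}{p_0}=s_1-\tfrac{1+\gamma_1}{p_1}$, dispose of the favourable microscopic indices ($q\ge p_1$ for \eqref{jf-BF}, $q\le p_0$ for \eqref{jf-FB}) by chaining \eqref{eq:scaleBF}, \eqref{eq:monotony}, $B^s_{p,p}=F^s_{p,p}$ and the weighted Besov--Sobolev embedding of \cite[Theorem 1.1]{MeyVer1}, and isolate the genuine microscopic improvement as the remaining content. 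Your computation of the $w_\gamma$-measure of dyadic intervals is also correct.

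The gap is that the one step carrying all the content --- the microscopic improvement for the unfavourable $q$ --- is only promised, and it is not the routine adaptation you suggest. The unweighted level-set/rearrangement proofs of Jawerth--Franke (and Vyb\'iral's sequence-space version) lean on the fact that every dyadic cube at scale $j$ has the same measure $2^{-jd}$, which is what collapses the problem to a one-parameter Hardy inequality in the scale index. For $w_\gamma$ the measures of scale-$j$ intervals range over $[c\,2^{-j(1+\gamma)},\infty)$ according to their distance to the origin, so your ``purely combinatorial Hardy inequality'' is a genuinely two-parameter inequality in $(j,m)$, and whether it holds under exactly the hypotheses $\gamma_0/p_0\ge\gamma_1/p_1$ and the Sobolev relation is precisely the theorem; nothing in the proposal verifies it, and the level-set truncation of $\sup_j 2^{js_0}\|S_jf\|_X$ must now be run with cutoff heights that depend on location as well as scale. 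In addition, the reduction itself presupposes a $\varphi$-transform (or atomic) characterisation of $B^s_{p,q}(\R,w_\gamma;X)$ and $F^s_{p,q}(\R,w_\gamma;X)$ in the weighted vector-valued setting, which is a nontrivial piece of machinery not available in this paper and which you would have to establish before the sequence-space reduction is legitimate. As it stands the proposal is a plausible plan whose decisive step is unexecuted.
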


As in \cite{Franke86, RS96}, we need discrete versions of these embeddings on the spaces of entire analytic functions. We follow \cite[Section 2.3]{Franke86}, see also \cite[Section 2.6.3]{RS96}. As a preparation we state the following elementary result on Fourier supports.

\begin{lemma}\label{lem:Fourier-supp}
Let $f\in \TD(\R^d;X)$ be such that $\supp \wh f\subseteq \{|\xi|\leq A\}$ for some $A > 0$. Denote by $\F_t$ the Fourier transform with respect to the last coordinate $t\in \R$. Then for each $x'\in \R^{d-1}$ we have $\supp\F_t(f(x',\cdot))\subseteq \{|\lambda|\leq A\}$.
\end{lemma}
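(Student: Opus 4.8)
The plan is to combine the Paley--Wiener--Schwartz structure of band-limited distributions with a one-variable restriction argument, and then to pass to the vector-valued setting by scalarization. First I would observe that any $f\in\TD(\R^d;X)$ with $\supp\wh f\subseteq\{|\xi|\le A\}$ is automatically a $C^\infty$ function of at most polynomial growth: picking $\chi\in C_c^\infty(\R^d)$ with $\chi\equiv1$ on $\{|\xi|\le A\}$ one has $\wh f=\chi\wh f$, hence $f=(\F^{-1}\chi)*f$ with $\F^{-1}\chi\in\Schw(\R^d)$, and the convolution of a Schwartz function with a tempered distribution is smooth with polynomial growth. In particular, for each fixed $x'$ the slice $f(x',\cdot)$ is a well-defined element of $C^\infty(\R;X)$ with polynomial growth in $t$, so that $\F_t(f(x',\cdot))\in\TD(\R;X)$ and the assertion is meaningful.

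Next I would reduce to the scalar case. For $x^*\in X^*$ set $u=x^*\circ f\in\TD(\R^d)$; then $\wh u=x^*\circ\wh f$ is again supported in $\{|\xi|\le A\}$, and $x^*\circ\F_t(f(x',\cdot))=\F_t(u(x',\cdot))$. If the scalar statement holds for every such $u$, then for any $\psi\in C_c^\infty(\R)$ with $\supp\psi\cap[-A,A]=\emptyset$ we obtain $x^*\bigl(\langle\F_t(f(x',\cdot)),\psi\rangle\bigr)=\langle\F_t(u(x',\cdot)),\psi\rangle=0$ for all $x^*$, hence $\langle\F_t(f(x',\cdot)),\psi\rangle=0$ by Hahn--Banach, i.e.\ $\supp\F_t(f(x',\cdot))\subseteq\{|\lambda|\le A\}$. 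So it suffices to treat a scalar tempered distribution $u$ with $\supp\wh u\subseteq\{|\xi|\le A\}$.

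For this, I would fix $x'$ and $\psi\in C_c^\infty(\R)$ with $\supp\psi\subseteq\{|\lambda|\ge A+\delta\}$ for some $\delta>0$, and show $\langle\F_t(u(x',\cdot)),\psi\rangle=0$; letting $\delta\to0^+$ then yields the support bound. Writing $v=\wh u$ (a compactly supported distribution of finite order) and $\xi=(\xi',\lambda)$, I would use the pointwise representation $u(x',t)=c\,\langle v_{(\xi',\lambda)},\,e^{i x'\cdot\xi'}e^{i t\lambda}\rangle$, the pairing being that of $v$ with the smooth function $\xi\mapsto e^{i x'\cdot\xi'}e^{i t\lambda}$. Then $\langle\F_t(u(x',\cdot)),\psi\rangle=\langle u(x',\cdot),\wh\psi\,\rangle=\int_\R u(x',t)\,\wh\psi(t)\,dt$, and inserting the representation and interchanging the $t$-integral with the pairing against $v$ (a Fubini-type identity, legitimate since $\wh\psi\in\Schw(\R)$ and $v$ has finite order) reduces, by Fourier inversion in $t$, the inner integral $\int_\R e^{i t\lambda}\wh\psi(t)\,dt$ to a constant multiple of $\psi(\lambda)$. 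Hence $\langle\F_t(u(x',\cdot)),\psi\rangle=c'\,\langle v_{(\xi',\lambda)},\,e^{i x'\cdot\xi'}\psi(\lambda)\rangle$. Since $(\xi',\lambda)\mapsto e^{i x'\cdot\xi'}\psi(\lambda)$ is smooth and vanishes on the open set $\{|\lambda|<A+\delta\}$, which contains $\supp v\subseteq\{|\xi|\le A\}$, this pairing is zero, as required.

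The only step that needs real care, and which I would spell out, is the interchange of the absolutely convergent $t$-integral with the distributional pairing against $v$; this is routine for compactly supported distributions of finite order (approximate the integral by Riemann sums, or differentiate under the pairing), but must be justified. As a consistency check, if $f\in\Schw(\R^d;X)$ the statement is immediate from $\F_t f(x',\lambda)=(2\pi)^{-(d-1)}\int_{\R^{d-1}}\wh f(\xi',\lambda)\,e^{i x'\cdot\xi'}\,d\xi'$, whose integrand vanishes identically in $\xi'$ once $|\lambda|>A$; alternatively, the multivariate Paley--Wiener--Schwartz theorem realizes $f$ as an entire function on $\C^d$ of exponential type $A$, so that each slice $w\mapsto f(x',w)$ is entire of exponential type $\le A$ with polynomial growth on $\R$, and the one-variable Paley--Wiener--Schwartz theorem then gives $\supp\F_t(f(x',\cdot))\subseteq[-A,A]$ directly.
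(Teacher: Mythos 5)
Your proof is correct. For what it's worth, the paper does not prove this lemma at all -- it is stated as ``elementary'' and used without justification -- so there is no official argument to compare against; your write-up supplies exactly the missing details. The structure is sound: the preliminary observation that $f=(\F^{-1}\chi)*f$ is a smooth $X$-valued function of polynomial growth is what makes the slice $f(x',\cdot)$ and its Fourier transform meaningful in the first place, and this point genuinely needs to be said. The Hahn--Banach scalarization is clean (note $|\lambda|\le|\xi|\le A$ on $\supp\wh f$, so the support hypothesis does pass to the last variable), and the scalar argument -- pairing $\wh\psi$ against the representation $u(x',t)=c\,\langle v,e^{ix'\cdot\xi'}e^{it\lambda}\rangle$ and pulling the $t$-integral inside to recover $\psi(\lambda)$ -- is a standard finite-order/compact-support interchange, which you correctly flag as the one step requiring justification. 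Your closing remark is arguably the shortest complete route: the Paley--Wiener--Schwartz bound $\|f(z)\|\le C(1+|z|)^Ne^{A|\Im z|}$ restricts to each slice $w\mapsto f(x',w)$, which is then entire of exponential type $\le A$ and tempered on $\R$, so the one-dimensional Paley--Wiener--Schwartz theorem gives the conclusion directly; either version is a valid proof of the lemma.
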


We have the following extension of Proposition \ref{prop:JF}. Observe that \eqref{jf_discrete_BF} and \eqref{jf_discrete_FB} correspond to \eqref{jf-BF} and \eqref{jf-FB}, respectively. The corresponding results in the unweighted case can be found in \cite[Theorem 2.4.1(IV)]{Franke86} and \cite[Theorem 2.6.3/3]{RS96}. We argue as in \cite{Franke86}, where the proof is only indicated.

\begin{proposition} \label{prop:jf_discrete} Let   $A > 0$, $s_0 > s_1$, $1<p_0<p_1< \infty$, $\gamma_0 \in (-1,p_0-1)$ and $\gamma_1\in (-1,p_1-1)$. Assume
$$\frac{\gamma_0}{p_0} \geq \frac{\gamma_1}{p_1}, \qquad s_0 - \frac{1+\gamma_0}{p_0} \geq s_1 - \frac{1+\gamma_1}{p_1}.$$
Then for $q\in [1,\infty]$ one has the continuous embeddings
\begin{equation}\label{jf_discrete_BF}
 \ell^{s_0,p_1}(L_A^{p_1(p_0)}(\R^d,w_{\gamma_0};X)) \hookrightarrow  L^{p_1}_A(\R^d,w_{\gamma_1};\ell^{s_1,q}(X)),
\end{equation}
 \begin{equation}\label{jf_discrete_FB}
  L^{p_0}_A(\R^d,w_{\gamma_0};\ell^{s_0,q}(X)) \hookrightarrow \ell^{s_1,p_0}(L_A^{p_0(p_1)}(\R^{d},w_{\gamma_1};X)).
 \end{equation}
\end{proposition}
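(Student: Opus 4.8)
The strategy is to deduce the discrete Jawerth–Franke embeddings \eqref{jf_discrete_BF} and \eqref{jf_discrete_FB} from their continuous counterparts \eqref{jf-BF} and \eqref{jf-FB} in Proposition \ref{prop:JF}, applied in the last variable $t\in\R$ with $x'\in\R^{d-1}$ frozen, and then to restore the $x'$-dependence by taking the $L^{p_1}(\R^{d-1})$- resp.\ $L^{p_0}(\R^{d-1})$-norm. First I would fix a sequence $(f_k)_{k\geq 0}$ with $\supp\wh f_k\subseteq\{|\xi|\leq A2^k\}$ and, by Lemma \ref{lem:Fourier-supp}, observe that for each $x'$ the partial Fourier transforms $\F_t(f_k(x',\cdot))$ are supported in $\{|\lambda|\leq A2^k\}$. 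Thus for a.e.\ $x'$ the sequence $(f_k(x',\cdot))_{k\geq 0}$ is a sequence of entire analytic functions of exponential type $\sim A2^k$ on $\R$, and one can pass between the ``discrete'' quasi-norms $\|(2^{s k}f_k(x',\cdot))_k\|_{\ell^q(L^r(\R,w))}$ (resp.\ $L^r(\ell^q)$) and the genuine Besov/Triebel–Lizorkin norms of a single function built from the $f_k(x',\cdot)$, using the standard fact that for entire analytic functions of controlled exponential type the Littlewood–Paley pieces and the original sequence are equivalent up to a harmless regrouping — this is exactly the mechanism behind \cite[Section 2.3]{Franke86} and \cite[Section 2.6.3]{RS96}. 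The key scalar-in-$x'$ step is then: apply \eqref{jf-BF} with $(p_0,p_1,\gamma_0,\gamma_1,s_0,s_1)$ as given to the function associated with $(f_k(x',\cdot))_k$, which yields, for a.e.\ $x'$,
\[
\big\|(2^{s_1 k}f_k(x',\cdot))_{k\geq 0}\big\|_{\ell^q(L^{p_1}(\R,w_{\gamma_1};X))} \leq C\,\big\|(2^{s_0 k}f_k(x',\cdot))_{k\geq 0}\big\|_{L^{p_0}(\R,w_{\gamma_0};\ell^{p_1}(X))},
\]
with $C$ independent of $x'$ since the embedding constant in Proposition \ref{prop:JF} does not depend on the particular function. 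Raising to the $p_1$-th power and integrating in $x'$ gives \eqref{jf_discrete_BF}; the argument for \eqref{jf_discrete_FB} is identical, using \eqref{jf-FB} and integrating the $p_0$-th power in $x'$.

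Two technical points need care. First, the equivalence between the discrete sequence quasi-norms on spaces of entire analytic functions and the Besov/Triebel–Lizorkin norm of an associated single distribution is not literally an equality: one has to insert a fixed resolution of unity, use that $\supp\wh f_k\subseteq\{|\xi|\leq A2^k\}$ forces each $S_j$ applied to $f_k$ to vanish unless $j\leq k+N_A$ for some $N_A$ depending only on $A$, and invoke a Nikolskii-type inequality (or the Fefferman–Stein maximal estimate) to control the ``tail'' contributions. Here the weighted mixed-norm maximal inequality of Lemma \ref{lem:maxoperator} is the right tool, since it operates precisely on $L^{p(r)}(\R^d,w;\ell^q)$; this is why the lemma was stated in that generality. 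Second, one must check that the mixed-norm space $L^{p_1(p_0)}(\R^d,w_{\gamma_0};X)=L^{p_1}(\R^{d-1};L^{p_0}(\R,w_{\gamma_0};X))$ matches exactly the object produced after freezing $x'$ and integrating — this is immediate from Fubini once the in-$x'$ estimate is pointwise a.e.\ in $x'$.

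I expect the main obstacle to be the careful bookkeeping of Fourier supports when translating between ``a sequence with $\supp\wh f_k\subseteq\{|\xi|\leq A2^k\}$'' and ``the genuine dyadic Littlewood–Paley decomposition of $\sum_k f_k$ (or of a suitably regrouped sum)'', uniformly in $x'$ and with constants depending only on $A$, $d$, and the fixed parameters. In the unweighted case this is handled in \cite[Section 2.3]{Franke86} only schematically; the weighted anisotropic setting forces one to be explicit because the weight $w_{\gamma}$ lives on the $t$-axis and interacts with the last-variable Fourier support, not with $x'$. Concretely, one reduces everything to the case where the $f_k$ come from a genuine $(\varphi_k)\in\Phi(\R)$ applied in $t$ (up to finitely many shifted pieces, absorbed by Lemma \ref{lem:maxoperator}), after which Proposition \ref{prop:JF} applies directly and the $x'$-integration is trivial. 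Once that reduction is in place the proof is short; I would present it by first stating the regrouping lemma, then doing \eqref{jf_discrete_BF} in detail and \eqref{jf_discrete_FB} by ``the same argument, exchanging the roles of the inner and outer exponents and using \eqref{jf-FB}''.
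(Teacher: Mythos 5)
Your overall strategy --- freeze $x'$, apply the one-dimensional continuous Jawerth--Franke embeddings of Proposition \ref{prop:JF} on the $t$-line via Lemma \ref{lem:Fourier-supp}, then integrate in $x'$ --- is exactly the paper's. The gap is in the step you describe as passing, ``up to a harmless regrouping,'' between the discrete sequence norms and the $B$/$F$-norms of ``a single function built from the $f_k(x',\cdot)$.'' No resolution of unity applied to $\sum_k f_k$ (regrouped or not) can recover the individual $f_k$, because the hypotheses only place $\supp\wh f_k$ in the \emph{balls} $\{|\xi|\leq A2^k\}$: these overlap completely, the pieces can cancel (take $f_{k+1}=-f_k$), so the map $(f_k)_k\mapsto\sum_k f_k$ is far from injective. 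Hence the direction ``discrete norm of $(f_k)_k\lesssim$ norm of the associated distribution,'' which you need on the target side of the embedding, is false for the plain sum, and neither a Nikolskii inequality nor Lemma \ref{lem:maxoperator} repairs this (the paper's proof of this proposition in fact never uses Lemma \ref{lem:maxoperator}; that lemma serves Lemma \ref{para2}). The missing idea is a modulation trick: with $A\leq 2^N$ one multiplies $f_k$ by $\mathrm{e}_k(t)=e^{\mathrm{i}2^{N+3+k}t}$, which translates the $t$-Fourier support of $f_k$ into the annulus $\{7\cdot2^{N+k}\leq|\xi|\leq 9\cdot2^{N+k}\}$; these annuli are pairwise disjoint and dyadically placed, so each piece is recovered from $g=\sum_j \mathrm{e}_jf_j$ by uniformly bounded Fourier multipliers, yielding the genuine two-sided equivalences \eqref{eq:equiv-F}--\eqref{eq:equiv-B} between the discrete norms of $(f_k)_k$ and $\|g\|_{F^s_{p,q}}$, $\|g\|_{B^s_{p,q}}$. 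Without that (or an equivalent device) the reduction to Proposition \ref{prop:JF} does not go through.

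A secondary error: your displayed pointwise-in-$x'$ inequality has the nestings swapped on both sides. The correct instance of \eqref{jf-BF} reads $\|(2^{s_1k}f_k(x',\cdot))_k\|_{L^{p_1}(\R,w_{\gamma_1};\ell^q(X))}\leq C\|(2^{s_0k}f_k(x',\cdot))_k\|_{\ell^{p_1}(L^{p_0}(\R,w_{\gamma_0};X))}$: the $F$-type norm ($L^{p_1}$ outside, $\ell^q$ inside) must sit on the left and the $B$-type norm ($\ell^{p_1}$ outside) on the right, since it is the outer $\ell^{p_1}$ that matches the $L^{p_1}(dx')$-integration and reproduces $\ell^{s_0,p_1}(L_A^{p_1(p_0)}(\R^d,w_{\gamma_0};X))$ in \eqref{jf_discrete_BF}. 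As written, your inequality is neither \eqref{jf-BF} nor \eqref{jf-FB}, and the subsequent Fubini bookkeeping would not match the definitions of the spaces.
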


\begin{proof} \emph{Step 1.} Take the smallest integer $N$ such that $A\leq 2^N$  and set $\text{e}_k(t) = e^{\text{i}2^{N+3+k}t}$. For $s\in \R$, $p\in (1,\infty)$, $q\in [1,\infty]$, $\gamma\in (-1,p-1)$ and  $(f_k)_{k\geq 0} \subset \TD(\R;X)$ with $\supp \wh f_k \subset\{|\xi|\leq 2^{N+k}\}$ we claim that
\begin{equation}\label{eq:equiv-F}
 \|(f_k)_{k\geq 0}\|_{L^{p}_A(\R,w_{\gamma};\ell^{s,q}(X))} \eqsim \Big \|\sum_{k\geq 0} \text{e}_k f_k\Big\|_{F_{p,q}^{s}(\R,w_{\gamma};X)},
\end{equation}
\begin{equation}\label{eq:equiv-B}
 \|(f_k)_{k\geq 0}\|_{\ell^{s,q}(L_A^{p}(\R,w_{\gamma};X))} \eqsim \Big \|\sum_{k\geq 0} \text{e}_k f_k\Big\|_{B_{p,q}^{s}(\R,w_{\gamma};X)}.
\end{equation}
Here $a\eqsim b$ means $C^{-1} a\leq b\leq Ca$. Let us prove \eqref{eq:equiv-F}, the case \eqref{eq:equiv-B} is similar. Observe  that $\supp \F(\text{e}_kf_k) \subseteq \{7\cdot 2^{N+k} \leq |\xi| \leq 9 \cdot 2^{N+k}\}$. Let $(S_n)_{n\geq 0}$ be defined with respect to $(\varphi_n)_{n\geq 0}\in \Phi(\R)$.
Then, for $n,k\geq 1$,
$$S_n (\text{e}_kf_k) \neq 0 \qquad \text{only if }\;\;   k+l_0 \leq n \leq k+l_1,$$
where $l_0,l_1\in \N$ are independent of $n$ and $k$.
We use this, \cite[Proposition 2.4]{MeyVer1} and that $\wh{\varphi}_n = \wh{\varphi}_1 (2^{-n+1\cdot})$ to obtain (setting $f_k = 0$ for negative $k$)
\begin{align*}
\Big \|\sum_{k\geq 0} &\,\text{e}_k f_k\Big\|_{F_{p,q}^{s}(\R,w_{\gamma};X)} \leq \sum_{l= l_0}^{l_1} \big \| \big(  S_{n} (\text{e}_{n-l} f_{n-l})\big)_{n\geq 0}\big\|_{L^{p}_A(\R,w_{\gamma};\ell^{s,q}(X))}\\
&\, \leq C\sum_{l= l_0}^{l_1} \sup_{n\geq 0} \| (1+|\cdot|^{2}) \F^{-1}\big( \wh{\varphi}_n (2^{N+n+l+1}\cdot)\big)\|_{L^1(\R)} \big \| ( \text{e}_{n-l} f_{n-l})_{n\geq 0}\big\|_{L^{p}_A(\R,w_{\gamma};\ell^{s,q}(X))}\\
&\, \leq C  \big \|  ( f_{n})_{n\geq 0}\big\|_{L^{p}_A(\R,w_{\gamma};\ell^{s,q}(X))}.
\end{align*}
For the converse we note that the Fourier supports of the $\text{e}_kf_k$ are pairwise disjoint. Take a function $\psi\in \Schw(\R)$ such that  $\wh{\psi} \equiv 1$ on $ \{7\cdot 2^N \leq |\xi| \leq 9 \cdot 2^N \}$ and $\wh{\psi} \equiv 0$ on $\{|\xi|\leq 6 \cdot 2^N\}\cup \{|\xi|\geq 10 \cdot 2^N\}$. Define $\psi_k$ by $\wh{\psi}_k = \wh{\psi}(2^{-k}\cdot)$.
Using again \cite[Proposition 2.4]{MeyVer1}, we get
\begin{align*}
\|(f_k)_{k\geq 0} &\,\|_{L^{p}_A(\R,w_{\gamma};\ell^{s,q}(X))}\leq  \sum_{l=l_0}^{l_1}  \| \big (S_{k+l} (\text{e}_k f_k)\big)_{k\geq 0}\|_{L^{p}_A(\R,w_{\gamma};\ell^{s,q}(X))}\\
&\, =  \sum_{l=l_0}^{l_1}\Big \| \Big( \psi_k* S_{k+l} \sum_{j\geq 0} \text{e}_j f_j\Big)_{k\geq 0}\Big\|_{L^{p}_A(\R,w_{\gamma};\ell^{s,q}(X))}\\
 &\,  \leq C\sum_{l=l_0}^{l_1} \sup_{k\geq 0} \big \| (1+|\cdot|^{2}) \F^{-1}\big( \wh{\psi}_k (2^{N+k+1}\cdot)\big)\big \|_{L^1(\R)} \Big \| \Big( S_{k+l}\sum_{j\geq 0} \,\text{e}_j f_j\Big)_{k\geq 0}\Big\|_{L^{p}_A(\R,w_{\gamma};\ell^{s,q}(X))}\\
 &\, \leq C \Big \|\sum_{j\geq 0} \text{e}_j f_j\Big\|_{F_{p,q}^{s}(\R,w_{\gamma};X)}.
\end{align*}

\emph{Step 2.} To prove \eqref{jf_discrete_BF}, let $(f_k)_{k\geq 0} \in \ell^{s_0,p_1}(L_A^{p_1(p_0)}(\R^d,w_{\gamma_0};X))$. Then  $\supp \F_t (f_k(x',\cdot)) \subseteq \{|\lambda|\leq A 2^k\}$ for $x'\in \R^{d-1}$ and each $k$ by Lemma \ref{lem:Fourier-supp}, where $\F_t$ is the Fourier transform with respect to $t\in \R$. We may thus use the equivalences \eqref{eq:equiv-F} and \eqref{eq:equiv-B} together with \eqref{jf-BF} to estimate
\begin{align*}
\|(f_k)_{k\geq 0}\|_{L^{p_1}_A(\R^d,w_{\gamma_1};\ell^{s_1,q}(X))}^{p_1} &\, = \int_{\R^{d-1}}\| (f_k(x',\cdot))_{k\geq 0}\|_{L^{p_1}_A(\R,w_{\gamma_1};\ell^{s_1,q}(X))}^{p_1}\, dx'\\
&\, \leq C \int_{\R^{d-1}} \Big\|\sum_{k\geq 0} \text{e}_k f_k(x',\cdot)\Big\|_{F_{p_1,q}^{s_1}(\R,w_{\gamma_1};X)}^{p_1} \, dx'\\
&\, \leq C \int_{\R^{d-1}} \Big\|\sum_{k\geq 0} \text{e}_k f_k(x',\cdot)\Big\|_{B_{p_0,p_1}^{s_0}(\R,w_{\gamma_0};X)}^{p_1} \, dx'\\
&\, \leq C \,\int_{\R^{d-1}} \| (\| f_k(x',\cdot)\|_{L^{p_0}(\R,w_{\gamma_0};X))})_{k\geq 0} \|_{\ell^{s_0,p_1}}^{p_1} \, dx'\\
&\, = C \|(f_k)_{k\geq 0}\|_{\ell^{s_0,p_1}(L_A^{p_1(p_0)}(\R^d,w_{\gamma_0};X))}^{p_1}.
\end{align*}
The derivation of \eqref{jf_discrete_FB} uses \eqref{jf-FB} and is analogous.
\end{proof}

\subsection{Convergence criteria for series}\label{subsec:criteria}
The following result provides sufficient conditions for the convergence of series in weighted mixed-norm spaces of entire analytic functions.  We refer to \cite[Section 2.3.2]{RS96} and \cite[Section 3.6]{JS08} for the unweighted cases.

\begin{lemma} \label{para2} Let $p,p_0,p_1\in (1,\infty)$, $q\in (1,\infty]$,  $w\in A_p(\R^d)$ and $w_1\in A_{p_1}(\R)$, where $w_1$ is understood to depend on the last coordinate $t\in \R$. Suppose that for some $k_0\in \N$ the sequence $(f_k)_{k\geq 0}\subset \TD(\R^d;X)$ and $s\in \R$ satisfy
\begin{equation}\label{6000} \text{either}\quad s\in \R\quad\text{and}\quad \supp \wh{f_0} \subset \{|\xi|\leq 2^{k_0}\},\quad  \supp \wh{f_k} \subset \{2^{k-k_0} \leq |\xi|\leq 2^{k+k_0}\}\,;
 \end{equation}
\begin{equation}\label{6001} \text{or}\quad s>0  \quad\text{and}\quad\supp \wh{f_k} \subset \{|\xi|\leq 2^{k+k_0}\}\;.
 \end{equation}
Then the following holds true.  If $(2^{sk} f_k)_{k\geq 0} \in L^{p_0(p_1)}(\R^d,w_1; \ell^q(X))$, then  $f = \sum_{k=0}^\infty f_k$ converges  in  $\TD(\R^d;X)$ and
\begin{equation}\label{6005}
\|(2^{sn}S_nf)_{n\geq 0}\|_{L^{p_0(p_1)}(\R^d,w_1; \ell^q(X))} \leq C \|(2^{sk} f_k)_{k\geq 0}\|_{L^{p_0(p_1)}(\R^d,w_1; \ell^q(X))}.
\end{equation}
In the same sense we have the estimates
\begin{equation}\label{6010}
\|(2^{sn}S_nf)_{n\geq 0}\|_{\ell^q(L^{p_0(p_1)}(\R^d,w_1; X))} \leq C \|(2^{sk} f_k)_{k\geq 0}\|_{\ell^q(L^{p_0(p_1)}(\R^d, w_1; X))},
\end{equation}
\begin{equation}\label{6011}
\|f\|_{F_{p,q}^s(\R^d,w;X)} \leq C \|(2^{sk} f_k)_{k\geq 0}\|_{L^p(\R^d, w; \ell^q(X))},
\end{equation}
\begin{equation}\label{6012}
\|f\|_{B_{p,q}^s(\R^d,w;X)} \leq C \|(2^{sk} f_k)_{k\geq 0}\|_{\ell^q(L^p(\R^d, w;X))}.
\end{equation}
Assuming \eqref{6000}, all assertions hold true also for $q = 1$ and $A_\infty$-weights. Assuming \eqref{6001}, the estimates \eqref{6010} and \eqref{6012} hold true also for $q = 1$.
\end{lemma}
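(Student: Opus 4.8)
The statement to prove is Lemma~\ref{para2}, the convergence criterion for series of entire analytic functions in weighted mixed-norm spaces. The overall strategy is the classical one (cf.\ \cite[Section~2.3.2]{RS96}, \cite[Section~3.6]{JS08}), refined to the weighted and mixed-norm setting using the maximal inequality established in Lemma~\ref{lem:maxoperator}. The plan is to reduce every assertion to a pointwise estimate of the Littlewood-Paley pieces $S_n f$ against the Hardy-Littlewood maximal function of the summands $f_k$, and then apply the vector-valued Fefferman-Stein inequality on the appropriate space.

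First I would establish \emph{convergence} of $f=\sum_{k\geq 0}f_k$ in $\TD(\R^d;X)$. Under either \eqref{6000} or \eqref{6001} the Fourier support of $f_k$ sits in a ball of radius $\lesssim 2^k$, so the Nikolskii-type inequality for entire analytic functions (bounding any Schwartz-type pairing of $f_k$ by a negative power of $2^k$ times a mixed-norm of $f_k$) together with the assumption $(2^{sk}f_k)_{k\geq0}\in L^{p_0(p_1)}(\R^d,w_1;\ell^q(X))$ — which forces $\|f_k\|$ in the relevant norm to be at most $C\,2^{-sk}a_k$ with $(a_k)\in\ell^q$ — gives a summable bound on $|\langle f_k,\phi\rangle|$ for $\phi\in\Schw(\R^d)$ (in case \eqref{6001} one additionally uses that the support lies in a true ball, so the low-frequency decay is controlled; in case \eqref{6000} the annular support gives decay at both ends). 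This is the routine part. Next, for the \emph{key pointwise estimate}: fix $n$ and apply $S_n$ to the series. Under \eqref{6000}, $S_n f_k$ vanishes unless $|n-k|\le k_0+C$, so $S_n f=\sum_{|k-n|\le k_0+C}S_n f_k$ is a finite sum; under \eqref{6001}, $S_n f_k$ vanishes for $k<n-k_0-C$, so $S_n f=\sum_{k\ge n-k_0-C}S_n f_k$, and here one crucially uses $s>0$ so that the tail $\sum_{k\ge n}2^{s(n-k)}$ is summable. In both cases, since $S_n f_k=\varphi_n * f_k$ and $f_k$ has Fourier support in a ball of radius $\lesssim 2^k$, one has the standard maximal-function domination
$$\|S_n f_k(x)\|_X \le C\, (M f_k)(x), \qquad x\in\R^d,$$
valid uniformly (this is the entire-analytic-functions maximal estimate, e.g.\ as in \cite[Section~2.3]{Franke86}; the ratio $2^n/2^k$ being bounded in case \eqref{6000}, or the exponential factor $2^{s(n-k)}$ absorbing the loss in case \eqref{6001}).

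Having reduced to $\|2^{sn}S_n f\|_X\lesssim \sum 2^{s(n-k)}(Mf_k)$ with a geometrically decaying weight in $n-k$, I would apply Young's inequality in the discrete variable followed by the weighted Fefferman-Stein maximal inequality: for \eqref{6005} this is exactly Lemma~\ref{lem:maxoperator} on $L^{p(p_1)}(\R^d,w_1;\ell^q)$ (requiring $q>1$ and $w_1\in A_{p_1}(\R)$); for \eqref{6010}, \eqref{6011}, \eqref{6012} one uses instead the ordinary weighted Fefferman-Stein inequality \eqref{Fefferman-Stein} on $\ell^q(L^p(\R^d,w))$ or $L^p(\R^d,w;\ell^q)$ with $w\in A_p(\R^d)$. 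Then I would note that \eqref{6011} and \eqref{6012} also require observing that the Fourier support conditions let one identify $\sum S_n f$ with $f$ in the sense of the $F$- and $B$-norm definitions. Finally, for the endpoint claims: when \eqref{6000} holds, $S_n f$ is a \emph{finite} sum (boundedly many terms), so no summation in $k$ is needed and one can take $\ell^1$ and merely $A_\infty$-weights — here the maximal inequality is not used at all, only the pointwise bound $\|S_n f_k\|_X\le C M f_k$ composed with the obvious $\ell^1$-triangle inequality and the $A_\infty$ boundedness of $M$ on each $L^p(w)$; when \eqref{6001} holds, the $\ell^1$ version of \eqref{6010} and \eqref{6012} follows by summing the geometric series in $n-k$ directly at the level of norms (Minkowski), again bypassing Fefferman-Stein.

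\textbf{Main obstacle.} The delicate point is the mixed-norm maximal inequality needed for \eqref{6005} with $q\in(1,\infty]$: one cannot simply iterate one-dimensional maximal inequalities because the inner and outer exponents differ and the weight lives only on the last coordinate. This is precisely why Lemma~\ref{lem:maxoperator} was proved separately, using that $L^r(\R,w;\ell^q)$ is a UMD Banach lattice and invoking the Rubio~de~Francia vector-valued maximal theorem \cite[Theorem~3]{RF}; the proof of the present lemma then just has to correctly set up the pointwise domination so that Lemma~\ref{lem:maxoperator} applies. A secondary subtlety is bookkeeping the shift constants $k_0$, $l_0$, $l_1$ in the support conditions so that the "boundedly many overlapping frequencies" argument is clean, and making sure that in case \eqref{6001} the restriction $s>0$ is used exactly where needed (the lower tail of the geometric series) and nowhere else, so that the $A_\infty$/$\ell^1$ endpoint under \eqref{6000} genuinely goes through without it.
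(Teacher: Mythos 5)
Your treatment of the main case ($q\in(1,\infty]$, $A_{p}$/$A_{p_1}$ weights) matches the paper's argument: reduce to the pointwise domination $\|S_nf_k\|_X\le C\,M(\|f_k\|_X)$ (which is the standard convolution bound with the radially decreasing majorant of $\varphi_n$ and needs no Fourier support information), exploit the support conditions to see that only shifts $l=k-n\ge -k_0$ (resp.\ $|l|\le k_0$) contribute, sum the geometric factor $2^{-sl}$ using $s>0$ where needed, and close with Lemma \ref{lem:maxoperator} resp.\ \eqref{Fefferman-Stein}. That part is fine.

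The genuine gap is in your endpoint case, i.e.\ $q=1$ and $A_\infty$ weights under \eqref{6000}. You claim this follows from "the pointwise bound $\|S_nf_k\|_X\le CMf_k$, the $\ell^1$-triangle inequality and the $A_\infty$ boundedness of $M$ on each $L^p(w)$". This fails twice. First, $M$ is bounded on $L^p(\R^d,w)$ if and only if $w\in A_p$; a general $A_\infty$ weight need not be in $A_p$ for the given $p$, so there is no such boundedness to invoke. Second, even for $A_p$ weights, the inner-$\ell^1$ estimates \eqref{6005} and \eqref{6011} do not reduce to scalar maximal bounds: the finiteness of the sum over $k$ for fixed $n$ only means there are boundedly many shifts $l$, but each shift still requires $\|(Mg_k)_k\|_{L^{p_0(p_1)}(w_1;\ell^1)}\lesssim\|(g_k)_k\|_{L^{p_0(p_1)}(w_1;\ell^1)}$, i.e.\ the Fefferman--Stein inequality at exponent $q=1$, which is false. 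The paper's proof circumvents both problems with the Peetre maximal function: choose $r\in(0,1)$ with $w_1\in A_{p_1/r}$, use that under \eqref{6000} the Fourier support of $f_{n+l}$ is an annulus of scale comparable to $2^n$ to obtain $\|S_nf_{n+l}(x)\|\le C\,(M(\|f_{n+l}\|^r)(x))^{1/r}$, and then apply Lemma \ref{lem:maxoperator} with exponents $p_0/r$, $p_1/r$ and $1/r$, all strictly greater than $1$. This is exactly where the annular support condition is essential, and it explains the paper's closing remark that the same endpoint cannot be reached under \eqref{6001}, where the supports are balls and hence "too large" for the Peetre estimate at scale $2^n$. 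Your proposal, as written, would (incorrectly) suggest the endpoint also holds under \eqref{6001}, since nothing in your argument distinguishes the two support conditions at this point.
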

\begin{proof} \emph{Step 1.} First assume $q \in (1,\infty]$ and that the weights are in $A_{p}$ and $A_{p_1}$, respectively. Throughout we set $f_k = 0$ for $k < 0$. Suppose that \eqref{6001} is satisfied. We show the convergence of the series and the estimate \eqref{6005}.

Fix $N\in \N$. Then for each $n$ the support condition for the $\wh f_k$ implies
$$S_n \sum_{k=0}^N f_k = S_n \sum_{k=n-k_0}^N f_k\quad \text{if }n \leq N+k_0, \qquad S_n \sum_{k=0}^N f_k = 0\quad \text{if }n > N+k_0.$$
Therefore, since $s> 0$,
\begin{align}
\Big\|\Big (2^{sn}S_n\sum_{k=0}^N f_k\Big)_{n\geq 0}\Big\|_{L^{p_0(p_1)}(\R^d,w_1; \ell^q(X))} &\,=  \Big \| \Big(2^{sn}S_n \sum_{l=-k_0}^{N-n} f_{n+l}  \Big)_{n\geq 0} \Big \|_{L^{p_0(p_1)}(\R^d,w_1;\ell^q(X))}\nonumber\\
&\,\leq \sum_{l=-k_0}^\infty 2^{-sl}\Big \| \Big(2^{s(n+l)}S_n  f_{n+l}  \Big)_{n\geq 0} \Big \|_{L^{p_0(p_1)}(\R^d,w_1;\ell^q(X))}\label{6003}\\
&\, \leq C\,\sup_{l\geq -k_0}\Big \| \Big(2^{s(n+l)}S_n  f_{n+l}  \Big)_{n\geq 0} \Big \|_{L^{p_0(p_1)}(\R^d,w_1;\ell^q(X))}, \label{6999}
\end{align}
where we set $\sum_{l=-k_0}^{N-n}$ equal to zero whenever $N-n<-k_0$.

To estimate the right-hand side of \eqref{6999}, define $\psi_n$ by $\psi_n(x) = \sup_{|y|\geq |x|} |\varphi_n(y)|$ and set $g_{n+l} = 2^{s(n+l)}\|f_{n+l}\|$. Applying \cite[Theorem 2.1.10]{GraClass} we find that for every $n\geq 0$,
\begin{align*}
\|2^{s(n+l)} S_n  f_{n+l}(x)\| \leq \|\psi_n\|_{L^1(\R^d)} Mg_{n+l}(x) \leq C Mg_{n+l}(x),
\end{align*}
where $M$ is the Hardy-Littlewood maximal operator. Lemma \ref{lem:maxoperator} gives
\begin{align*}
\big\| \big(2^{s(n+l)}S_n  f_{n+l}  \big)_{n\geq 0} \big\|_{L^{p_0(p_1)}(\R^d,w_1;\ell^q(X))} & \leq C \big\| (M g_{n+l} )_{n\geq 0} \big \|_{L^{p_0(p_1)}(\R^d,w_1;\ell^q)}
\\ & \leq C \big\| (g_{n+l})_{n\geq 0} \big\|_{L^{p_0(p_1)}(\R^d,w_1;\ell^q)}
\\ & = C\big\| \big(2^{s(n+l)}f_{n+l}  \big)_{n\geq 0} \big \|_{L^{p_0(p_1)}(\R^d,w_1;\ell^q(X))}.
\end{align*}
Combining this estimate with \eqref{6999}, we obtain
\begin{equation}\label{abc1}
 \Big\|\Big (2^{sn}S_n\sum_{k=0}^N f_k\Big)_{n\geq 0}\Big\|_{L^{p_0(p_1)}(\R^d,w_1; \ell^q(X))}  \leq C \big\| (2^{sk} f_{k})_{k\geq 0} \big\|_{L^{p_0(p_1)}(\R^d,w_1;\ell^q(X))},
\end{equation}
with a constant $C$ independent of $N$. Now set
\[\|g\|_{F_{p_0(p_1),q}^s(\R^d,w_1;X)} =  \big\|\big (2^{sn}S_n g\big)_{n\geq 0}\big\|_{L^{p_0(p_1)}(\R^d,w_1; \ell^q(X))}.\]
This defines a complete space of distributions, which embeds continuously into $\TD(\R^d;X)$ (see the proofs of \cite[Theorem 2.3.3]{Tri83} and \cite[Proposition 10]{JS07}, and use \cite[Lemma 4.5]{MeyVer1}). It follows from \eqref{abc1} that $(\sum_{k=0}^N f_k)_{N\geq 0}$ is a Cauchy sequence in $F_{p_0(p_1),1}^{s-\varepsilon}(\R^d,w_1;X)$ for $\varepsilon> 0$. Hence it converges in $\TD(\R^d;X)$. A Fatou argument as in \eqref{fatou} applied to \eqref{abc1} yields the estimate \eqref{6005}.

The other estimates can be derived in a similar way. In case when the Fourier supports satisfy \eqref{6000}, the sum $\sum_{l=-k_0}^\infty$ in \eqref{6003} can be replaced by $\sum_{l=-k_0}^{k_0}$. Then the restriction on $s$ is not necessary.

\emph{Step 2.} Consider the case $q = 1$. Assume \eqref{6001}. Then \eqref{6010} and \eqref{6012} can be shown as before, where instead of Lemma \ref{lem:maxoperator} it suffices to use the boundedness of $M$ on $L^{p_1}(\R,w_1)$ and on $L^p(\R^d,w)$, respectively.

Assume \eqref{6000} and $w_1\in A_\infty$. We prove \eqref{6005}, the arguments for the other estimates are similar. Arguing as before, we get
$$
\Big\|\Big (2^{sn}S_n\sum_{k=0}^N f_k\Big)_{n\geq 0}\Big\|_{L^{p_0(p_1)}(\R^d,w_1; \ell^1(X))}  \leq C\,\sum_{|l|\leq k_0}\Big \| \Big(2^{s(n+l)}S_n  f_{n+l}  \Big)_{n\geq 0} \Big \|_{L^{p_0(p_1)}(\R^d,w_1;\ell^1(X))}.
$$
Choose $r\in (0,1)$ such that $w_1\in A_{p_1/r}(\R)$. For $x\in \R^d$ we have
\begin{align*}
\|S_n f_{n+l}(x)\| \leq \sup_{z\in \R^d} \frac{\|f_{n+l}(x-z)\|}{1+ |2^nz|^{d/r}} \int_{\R^d} (1+ |2^ny|^{d/r}) |\varphi_n(y)|\,d y.
\end{align*}
Here the second factor is bounded independent of $n$ since $\varphi_n = 2^{nd} \varphi_1(2^{n-1}\cdot)$.
The diameter of the Fourier support of $f_{n+l}$ is comparable to $2^n$. We thus obtain from the proof of \cite[Theorem 1.6.2]{Tri83} that
\begin{align*}
2^{s(n+l)} \|S_n f_{n+l}(x)\| &\, \leq C 2^{s(n+l)} \sup_{z\in \R^d} \frac{\|f_{n+l}(x-z)\|}{1+ |2^nz|^{d/r}} \\
&\, \leq C 2^{s(n+l)}(M\|f_{n+l}\|^r(x))^{1/r} = C (Mg_{n+l}^r(x))^{1/r},
\end{align*}
where as above $g_{n+l}  = 2^{s(n+l)}\|f_{n+l}\|$. Since  $1/r> 1$ and $w_1\in A_{p_1/r}(\R)$, we can use Lemma \ref{lem:maxoperator} to estimate
\begin{align*}
\Big \| \Big(2^{s(n+l)}S_n f_{n+l}  \Big)_{n\geq 0} \Big \|_{L^{p_0(p_1)}(\R^d,w_1;\ell^1(X))} &\leq
C \big\|(Mg_{n+l}^r)_{n\geq 0}\big\|_{L^{p_0/r(p_1/r)}(\R^d,w_1;\ell^{1/r})}^{1/r}
\\ & \leq C\big\|(g_{n+l}^r)_{n\geq 0}\big\|_{L^{p_0/r(p_1/r)}(\R^d,w_1;\ell^{1/r})}^{1/r}
\\ & = C\big \|(2^{s(n+l)} f_{n+l})_{n\geq 0}\big\|_{L^{p_0(p_1)}(\R,w_1;\ell^{1}(X))}.
\end{align*}
Now the proof can be finished as before.
\end{proof}

\begin{remark}
We do not know how to prove \eqref{6005} and \eqref{6011} under the assumption \eqref{6001} for  $q = 1$ and $A_{\infty}$-weights. The above argument does not work since the supports of the $\wh f_{n}$ are too large.
\end{remark}

\end{appendix}

\end{document}